\theoremstyle{plain}
\newtheorem{theorem}{Theorem}[section]
\newtheorem*{thrm*}{Theorem}
\newtheorem{lemma}[theorem]{Lemma}
\newtheorem{proposition}[theorem]{Proposition}
\newtheorem{corollary}[theorem]{Corollary}
\newtheorem{definition}[theorem]{Definition}
\newtheorem{remark}[theorem]{Remark}
\numberwithin{equation}{section}
\newcommand{\R}{\mathbb R}
\newcommand{\N}{\mathbb N}
\newcommand{\sL}{\mathcal L}
\newcommand{\al}{\alpha}
\newcommand{\p}{\partial}  
\newcommand{\Om}{\Omega}
\newcommand{\e}{\epsilon}
\newcommand{\Hn}{\mathbb H^n}
\newcommand{\Hone}{\mathbb H^1}
\newcommand{\loc}{\text{loc}}
\newcommand{\sign}{\text{sign}}
\def \mX {{\mathcal X}}
\def \mS {{\mathcal S}}
\def \mY {{\mathcal Y}}
\def \R {{\mathbb {R}}}
\def \N {{\mathbb N}}
\def \s {{\sigma}}
\def \O {{\Omega}}
\def \Om {{\Omega}}
\def \p {{\partial}}
\def  \e {{\varepsilon}}
\def \tilde {\widetilde}
\def \O {\Omega}
\def \G {\Gamma}
\def \z {\zeta}
\def\p{\partial}
\def \x {{\xi}}
\def \tX {\tilde X}
\def\te {\tilde{e}}
\def\oe{\bar{e}}
\begin{document}

\title[Regularity of non-characteristic minimal graphs in $\Hone$]
 {Regularity of non-characteristic minimal graphs in  the Heisenberg group  $\Hone$}

\begin{abstract}  Minimal surfaces in the sub-Riemannian Heisenberg group can be constructed by means of a Riemannian approximation scheme, as limit of Riemannian minimal surfaces. We study the regularity of Lipschitz, non-characteristic minimal surfaces which arise as such limits. Our main results are a-priori estimates
on the solutions of the approximating Riemannian PDE and the ensuing  $C^{\infty}$ regularity
of the sub-Riemannian minimal surface along its  Legendrian foliation.
\end{abstract}

\author{Luca Capogna}\address{Department of Mathematical Sciences,
University of Arkansas, Fayetteville, AR 72701}\email{lcapogna@uark.edu}
\author{Giovanna Citti}\address{Dipartimento di Matematica, Piazza Porta S. Donato 5,
40126 Bologna, Italy}\email{citti@dm.unibo.it}
\author{Maria Manfredini}
\address{Dipartimento di Matematica, Piazza Porta S. Donato 5,
40126 Bologna, Italy}\email{manfredi@dm.unibo.it}

\keywords{minimal surfaces, sub-Riemannian geometry, viscosity solutions\\
The authors are partially funded by NSF Career grant DMS-0124318
(LC) and by INDAM
(GC)}

%
\maketitle

\section{Introduction}

The first Heisenberg group $\Hone$ is a Lie group with a
3-dimensional Lie algebra $h=V^1\oplus V^2$ such that
$\dim(V^1)=2, \dim(V^2)=1$, $[V^1,V^1]=V^2$ and $[h,V^2]=0$. Let
$\mS,\mX,\mY\in h$ be any  basis such that $[\mS,\mX]=\mY\in V^2$.
By assigning a left-invariant Riemannian metric $g_0$ on the {\it
horizontal}  sub-bundle $H\Hone$ given by the $V^1$ layer, we
obtain a sub-Riemannian space $(\Hone, g_0)$. We choose $\mS,\mX$
such that they are orthonormal with respect to $g_0$. The
corresponding control metric $d_0$ (the {\it Carnot-Caratheodory
metric} \cite{NSW}) is easily shown to be well defined. We extend
$g_0$ to a (left-invariant) Riemannian metric $g_1$ on the full
tangent bundle of $h$ requiring that $V^2$ and $V^1$ are
orthogonal in this extension. The {\it dilated} metrics $g_{\e}$, $\e>0$
are defined so that $\mS,\mX, \e\mY$ are orthonormal.  We define $d_\e$
to be the corresponding distance function.
%
We define   {\it polarized} coordinates 
$(x_1, x_2, x_3)$ in  $\Hone$   by identifying the
triplet with the point $\exp(x_3 \mathcal S)\exp(x_1 \mathcal X + x_2 \mathcal Y)$. The
Baker-Campbell-Hausdorff formula yields $\mathcal S=\p_{3}$,
$\mathcal X=\p_1+x_3\p_2$ and $\mathcal Y=\p_2$. 

 If $M\subset \Hone$ is a $C^1$ surface, then 
$p\in M$ is called {\it characteristic } if both $\mS,\mX$ are tangent to $M$ at $p$.
An intrinsic graph (see \cite{frss:perimeter,frss:carnot}) is a (non-characteristic) graph of the form 
\begin{equation}\label{int-graph-1}M=\{x_3=u(x_1, x_2)|\,
(x_1, x_2)\in\O\subset \R^2\},\end{equation}
where $\Om\subset \R^2$ is an open set.
%
An analogue of the classical implicit function theorem \cite{frss:carnot,CittiManfredini,ascv} shows that any surface $\{f=0\}$
with $\mS f,\mX f\in C(\Hone)$ can be represented as an intrisic graph, in 
a neighborhood of any of its  non-characteristic points.

The flow associated to the   line
bundle of tangent directions which are also horizontal foliate the
complement of the characteristic locus of the surface. This is called {\it Legendrian foliation} in the literature. Note that the  horizontal tangent bundle
of an intrisic graph \eqref{int-graph-1} is spanned by
the single vector field $T=Xu\mS+\mX|_{u((x_1,x_2),x_1,x_2)}$.  We note that the projection of this vector field on $T\Omega$
yields 
the vector field in $\Omega$,
$$X_{1, u} = \p_1 + u \p_2.$$

\bigskip

\paragraph{\bf Minimal surfaces}
Several equivalent notions of {\it horizontal mean
curvature} $H_0$ for a $C^2$ surface  $M\subset \Hone$ (outside
characteristic points) have been given in the literature. To quote
a few:
 $H_0$ can be defined in terms of the first variation of the area functional
 \cite{dgn:minimal, pau:cmc-carnot, chmy:minimal, RR1, Sherbakova, montefalcone,cdpt:survey}, as horizontal divergence of the horizontal unit normal
 or as limit of the mean curvatures $H_\e$ in the Riemannian metrics $g_\e$ as
 $\e\to 0$.
It is also   well known (see for example \cite{chmy:minimal,
 pau:cmc-carnot, CS}) that $H_0$ coincides with the
curvature of the projection of the Legendrian leaves on the
Horizontal plane.

%
A  $C^2$ non characteristic surface $M\subset \Hone$ is called
{\it minimal} if  it satisfies $H_0=0$ identically. In
particular for a $C^2$ intrinsic graph, a direct computation yields that the PDE can be written in terms of the vector $X_{1,u}$
as follows
\begin{equation}\label{eq0bis}
H_0=X_{1, u} \left( \frac{X_{1, u} u}{\sqrt{1+|X_{1, u} u|^2}}\right)=0.
\end{equation}
A deep result of Ambrosio, Serra-Cassano and Vittone \cite{ascv} shows that 
such PDE continues to hold below the $C^2$ threshold in a suitably weak sense.

\bigskip

\paragraph{\bf Generalized solutions and Riemannian approximants} Because
the horizontal mean curvature arises as first variation of the sub-Riemannian perimeter,
minimal surfaces are critical points of the perimeter. As such
these objects can be interpreted in weak sense, far below the
threshold of $C^2$ smoothness (see \cite{ascv},
\cite{gn:isoperimetric}, \cite{pau:minimal},\cite{pau:obstructions},\cite{chmy:minimal},\cite{chy},\cite{Rito}).
 As an example, starting from the family of
shears $x_2-x_1 x_3+g(x_3)$ (see \cite{pau:obstructions}) one can obtain the  intrinsic graph
$x_3=u(x_1, x_2)=\frac {x_2}{x_1-sgn\,  (x_2)}$ defined in $\Omega=\{(x_1,x_2)\in
\R^2: x_1>1\}$ which is minimal in the sense that it is foliated by
horizontal lifts of segments, it is locally Lipschitz (with respect to the Euclidean metric) but clearly not $C^1$ smooth.

In \cite{pau:minimal} and in \cite{chy}, the authors 
 prove existence of (respectively $W^{1,p}$ and Lipschitz) minimal surfaces using
 the Riemannian approximation scheme: as $\e\to 0$
  $$(\Hone, d_\e) \rightarrow (\Hone, d_0),$$
  in the Gromov-Hausdorff  topology (see \cite[Section 2.4]{cdpt:survey} for a detailed description).
  
The approach to existence of solutions  in these papers is  based on  a-priori
estimates for the minimizers of the approximating Riemannian $g_\e$ perimeter functionals \cite{pau:minimal}
and on solutions of  the "{\it Riemannian"} regularized versions of \eqref{eq0bis}  \cite{chy}.
In adapting the approximation scheme to the intrisic graphs setting we note that  
the minimal surface PDE for the metric $g_\e$  corresponding to 
intrinsic graphs  \eqref{int-graph-1}
 \begin{equation}\label{eqebis}L_\e u =\sum_{i=1}^2X_{i ,u}
\left( \frac{X_{i, u} u}{\sqrt{1+|\nabla_\e u|^2}}\right)=0, \text{ in } \Om\subset \R^2\end{equation}
where  
$$X_{2, u} = \e\p_2 \quad \nabla_\e =(X_{1, u}, X_{2, u}),$$
is a  natural elliptic regularization of  the PDE (\ref{eq0bis}).    As such, it is more amenable to establishing a-priori higher regularity estimates. The difficulty of course
resides in obtaining estimates which are uniform in the parameter $\e$ as $\e\to 0$.


These observations lead us to the definition of the class of minimal surfaces we want to investigate
 \begin{definition}\label{visc}
 We say that a function $u \in Lip(\Omega)$ is a vanishing viscosity
 solution of the equation (\ref{eq0bis}) if there exists
  a sequence of positive numbers $\e_{j}$ with
  $\e_{j} \rightarrow 0$ when $j \rightarrow \infty$, and a
  sequence $(u_{j})$ in $C^\infty(\Omega)$ such that:

(i) $\sum_{i=1}^2 X_{i, u_j} \left(
\frac{X_{i, u_j} u_j} { \sqrt{1+|\nabla_{\e_j} u_j|^2} }\right)=0.
\quad in
 \;\;\Omega\;\;\text{for all } j\in N. $ 
 
 (ii) The sequence $(u_{j})$ is bounded in
 $Lip(\Omega)$ and uniformly  convergent on subcompacts of $\Omega$ to $u$. \end{definition}

\begin{remark} Existence of vanishing viscosity solutions in the case of $t-$graphs, i.e.
graphs of the form $x_{2}=g(s,x_1)$, has been proved in
\cite[Theorem A and Theorem 4.5]{chy}.  In the same paper the authors establish that such solutions are
perimeter minimizers  and address  uniqueness.  Assuming $C^1$ convergence
of the approximating solutions, outside the characteristic sets
the $t-$graphs solutions in \cite{chy}, both the approximating and the limit
solutions,  can be represented as intrisic graphs and hence yield vanishing viscosity
intrinsic graphs.
\end{remark}
%
%

\paragraph{\bf Regularity results}
Given the examples of non-smooth minimal surfaces mentioned above,  the question
arises as which kind of regularity can one expect. This problem has beeen recently addressed in a series of papers \cite{pau:obstructions}, \cite{chy}, \cite{CCM2}, \cite{chy:regular} and \cite{fsc-bigolin}. Valuable insights into the problem of regularity
are also provided in the 
works \cite{pau:obstructions}, \cite{chy}, and \cite{Rito} in the form of examples of non-smooth minimal surfaces.
 The regularity properties of the implicit function in the implicit
function theorem quoted earlier provides an interesting insight
into this problem and indicates that one should look for
regularity only in the direction of the Legendrian foliation. 
Indeed we prove

 \begin{theorem}[Regularity]\label{main}If $u$ is
 a vanishing viscosity solution of \eqref{eq0bis},
 then for all $\al\in (0,1)$ and $K\subset \subset \Om\subset \R^2$,
  \begin{equation}\label{c1alpha}
  u\in C^{1,\alpha}(K)
  \end{equation} and for all $k\in \N$ and $p>1$
  \begin{equation}\label{higher}X^k_{1, u} u \in W^{1,p}_{loc} (\Omega).
  \end{equation}  Here
  $C^{1,\alpha}(K)$ and  $ W^{1,p}_{loc} (\Omega)$ denote the
spaces of functions with H\"older continuous Euclidean gradient
 and the classical Sobolev Space. In particular  one has
 $X^k_{1, u} u\in C^{\al}_{loc}(\Om)$ (the Euclidean H\"older space) for all
 $\al\in (0,1)$
 and  hence $X^2_{1, u} u=0$,  holds pointwise everywhere.
 \end{theorem}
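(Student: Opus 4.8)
The plan is to establish a-priori estimates for the smooth approximants $u_j$ of Definition~\ref{visc} that are uniform in $j$ (equivalently, in $\e_j$), and then pass to the limit. Write $X_i=X_{i,u_j}$, so $X_1=\p_1+u_j\p_2$, $X_2=\e_j\p_2$, and let $(c_{il})$ be the symmetric matrix $\frac{\p}{\p p_l}\!\left(\frac{p_i}{\sqrt{1+|p|^2}}\right)$ evaluated at $p=\nabla_{\e_j}u_j$. Since $\|\nabla u_j\|_{L^\infty}$ is bounded uniformly in $j$, so is $|\nabla_{\e_j}u_j|=|(X_1u_j,\e_j\p_2u_j)|$; hence $\lambda I\le(c_{il})\le\Lambda I$ with $\lambda,\Lambda$ independent of $j$, i.e. the coefficient matrix is uniformly elliptic and the degeneracy as $\e_j\to0$ is carried entirely by the vector field $X_2=\e_j\p_2$.

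\emph{Differentiated equations and uniform energy estimates.} First I would expand $L_{\e_j}u_j=0$ and, using the commutators $[X_1,X_2]=-(\p_2u_j)X_2$ and $[\p_2,X_1]=(\p_2u_j)\p_2$, derive the equations satisfied by $v_j:=X_{1,u_j}u_j$, by $\p_2u_j$, and more generally by the iterates $\varphi_j^{(k)}:=X^k_{1,u_j}u_j$. Each has the divergence form
\[
\sum_{i,l}X_i\bigl(c_{il}X_l\psi_j\bigr)=\sum_iX_iF_i+G,
\]
where $F_i$ and $G$ depend only on $u_j$, $\nabla u_j$, the lower-order iterates and on $\e_j$, and --- the crucial structural point --- every occurrence of $\p_2$ other than inside an $X_1$ appears paired with an $\e_j$, i.e. as $X_2$. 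Testing with $\phi^2|\psi_j|^{2\beta}\psi_j$ ($\phi$ a cutoff), integrating by parts, and absorbing the errors into the nonnegative quadratic form $\sum c_{il}X_i\psi_j\,X_l\psi_j\ge\lambda(|X_1\psi_j|^2+\e_j^2|\p_2\psi_j|^2)$ --- which closes precisely because the dangerous terms either carry an explicit $\e_j$ or occur as $X_2\psi_j=\e_j\p_2\psi_j$ --- yields, using only the uniform Lipschitz bound,
\[
\int_{K}\bigl(|X_{1,u_j}\psi_j|^2+\e_j^2|\p_2\psi_j|^2\bigr)|\psi_j|^{2\beta}\phi^2\,dx\le C(K,\beta).
\]
Applied to $\psi_j=\p_2u_j$ and to $\psi_j=v_j$, together with the identities $\p_2v_j=X_{1,u_j}(\p_2u_j)+(\p_2u_j)^2$ and $\p_1v_j=X_{1,u_j}v_j-u_j\p_2v_j$, this first gives $v_j\in W^{1,2}_{loc}$ bounded uniformly; since $\dim\Om=2$ this embeds into every $L^q_{loc}$, and a bootstrap (testing with higher powers $\beta$) upgrades it to $X^k_{1,u_j}u_j\in W^{1,p}_{loc}$ bounded uniformly for all $k$ and $p$, hence $X^k_{1,u_j}u_j\in C^\alpha_{loc}$ uniformly for all $\alpha\in(0,1)$.

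\emph{Uniform $C^{1,\alpha}$ estimate.} This is the delicate point, and I expect it to be the main obstacle. Because the linearized operator $\sum c_{il}X_iX_l$ degenerates as $\e_j\to0$, the De Giorgi--Nash--Moser theory only produces a H\"older modulus for $\p_2u_j$ with exponent $\alpha(\e_j)\to0$, and the energy estimates above control $\p_2u_j$ only in the direction of the Legendrian foliation, not transverse to it. To get a modulus for $\nabla u_j$ that is uniform in $j$ one must use the geometry: the uniform Lipschitz bound keeps the Legendrian foliation of $u_j$ non-degenerate (its leaves do not cross and vary in a controlled way), $u_j$ is affine along each leaf, and $v_j$ is already uniformly $C^\alpha$. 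I would combine these with a blow-up/compactness argument: if uniform interior $C^{1,\alpha}$ estimates failed, a suitable rescaling at a worst point would converge either to an entire solution of a constant-coefficient uniformly elliptic equation, or to an entire solution of the limit equation $X_{1,u}\bigl(X_{1,u}u/\sqrt{1+(X_{1,u}u)^2}\bigr)=0$ with bounded gradient; the heart of the matter is then a Liouville-type rigidity for the latter contradicting the blow-up normalization.

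\emph{Passage to the limit.} With the uniform $C^{1,\alpha}$ bound in hand, $u_j\to u$ in $C^{1,\alpha'}_{loc}$ for every $\alpha'<\alpha$, so $u\in C^{1,\alpha}(K)$ for all $K\subset\subset\Om$, which is \eqref{c1alpha}. Since $X_{1,u_j}\to X_{1,u}$ in $C^{\alpha'}_{loc}$ and $X^k_{1,u_j}u_j$ is bounded in $W^{1,p}_{loc}$ uniformly, passing to the limit (weakly in $W^{1,p}$, strongly in $C^{\alpha'}$) identifies the limit of $X^k_{1,u_j}u_j$ as $X^k_{1,u}u$ and gives \eqref{higher}; Morrey's embedding then yields $X^k_{1,u}u\in C^\alpha_{loc}(\Om)$ for all $\alpha\in(0,1)$. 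Finally, letting $j\to\infty$ in part (i) of Definition~\ref{visc}, and using $|\nabla_{\e_j}u_j|^2=(X_{1,u_j}u_j)^2+\e_j^2(\p_2u_j)^2\to(X_{1,u}u)^2$ uniformly, shows that $u$ solves $X_{1,u}\bigl(X_{1,u}u/\sqrt{1+(X_{1,u}u)^2}\bigr)=0$ distributionally; expanding this gives $X^2_{1,u}u/(1+(X_{1,u}u)^2)^{3/2}=0$, and since $X^2_{1,u}u$ is continuous by \eqref{higher} and Morrey, $X^2_{1,u}u=0$ holds everywhere.
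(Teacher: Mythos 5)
There is a genuine gap, and it sits exactly where the paper has to work hardest. Your first step claims that testing the differentiated equations with powers of the solution closes into uniform $W^{1,p}_{loc}$ bounds for all the iterates $X^k_{1,u_j}u_j$ "by bootstrap". It does not close. The quadratic form only controls $X_{1,u_j}$ and $\e_j\p_2$ derivatives, so every Caccioppoli step leaves uncontrolled right-hand side terms: already at the first iteration the estimate for $\nabla_{\e_j}\p_2u_j$ (the paper's Theorem \ref{teo1}) carries a term of the type $\int|\nabla^2_{\e_j}u_j|^{2p}\phi^{2p}$, so to gain $L^{p}$ control of $\nabla_{\e_j}\p_2 u_j$ you already need $W^{2,2p}_{\e_j}$ control of $u_j$, uniform in $j$ --- which is precisely what is not available from energy estimates, because the linearized operator is neither uniformly elliptic as $\e_j\to 0$ nor subelliptic ($X_{1,u}$ alone does not bracket-generate, so there is no uniform sub-elliptic gain either). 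This is why the paper devotes Section 3 to a lifting of the vector fields to $\Om\times(-1,1)$ (where $\{\tX_1,\p_s\}$ is a step-three H\"ormander system whose commutators produce the missing direction $\p_{x_2}$), a freezing along an intrinsic first-order Taylor polynomial (Lemma \ref{d1alpha}), and the uniform-in-$\e$ estimates on the fundamental solutions of the frozen regularized sub-Laplacians from \cite{CiMa-F} (Theorem \ref{fundam}); only through the resulting representation formulas and fractional integration does one get the uniform $W^{2,10/3}_\e$ and then $W^{2,p}_\e$ bounds (Theorem \ref{p402}), which are then fed into the Caccioppoli machinery together with a Euclidean Caccioppoli--Sobolev absorption argument (Lemma \ref{unormaw2p}) before any bootstrap can run. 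None of this apparatus, or a substitute for it, appears in your proposal.

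The second step is also not a proof. You correctly identify the uniform $C^{1,\alpha}$ modulus as the main obstacle, but you replace it by an unproven blow-up/compactness scheme resting on a Liouville-type rigidity for entire solutions of $X_{1,u}\bigl(X_{1,u}u/\sqrt{1+(X_{1,u}u)^2}\bigr)=0$ with bounded gradient. Such rigidity is exactly the Bernstein-type problem in $\Hone$, where non-planar low-regularity entire minimal intrinsic graphs are known to exist (the paper's own example $u=x_2/(x_1-\operatorname{sgn}(x_2))$ illustrates how degenerate the equation is at Lipschitz regularity), and in any case the compactness needed to extract the blow-up limit presupposes uniform estimates of the very kind you are trying to prove, so the argument is circular. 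In the paper the H\"older control is instead a consequence of the quantitative chain above: $\nabla_E\nabla_\e u\in L^4_{loc}$ uniformly in $\e$ (via the Euclidean Caccioppoli inequality and the planar Sobolev inequality with a small-support absorption), hence $\nabla_\e u\in C^{1/2}_E$ by Morrey, which reopens Theorem \ref{p402}(ii) and the full iteration. Your final limiting step (identification of the weak limits, $X^2_{1,u}u=0$ pointwise) is fine and matches the paper's Section 6, but it only works once the uniform estimates are actually in hand.
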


\begin{remark}
To better understand the notion of  intrinsic regularity  we return to
the  non-smooth minimal graph $u(x_1, x_2)=\frac {x_2}{x_1-sgn\,  (x_2)}$
described earlier. Although this function is not $C^1$  in the
Euclidean sense, observe that $X_1 u=0$ for every $x_1, x_2\in\Omega$
Hence, this is an example of a minimal surface which is not smooth
but which can be differentiated indefinitely in the direction of the Legendrian foliation.
\end{remark}

\begin{remark}
 The regularity theory for intrinsic minimal surfaces in $\Hn$ with
$n>1$ is quite different. In the recent paper \cite{CCM2} we
show that any Lipschitz continuous  vanishing viscosity minimal
intrinsic graph in $\Hn$, $n>1$ (defined through the Riemannian
approximation scheme) is smooth. The main reason is that in higher
dimension the horizontal tangent bundle generates as a Lie algebra
the full tangent bundle, while this does not happen in the $n=1$
case.
\end{remark}

As a consequence of the regularity theorem we can prove that the
Sobolev weak derivatives of vanishing viscosity solutions agree
with Lie derivatives along the leaves of the Legendrian foliation.
Hence, we obtain that vanishing viscosity solutions  actually satisfy \eqref{eq0bis}
 everywhere  pointwise. This result immediately yields a {\it rigidity} of the Legendrian 
 foliation.
\begin{corollary}[Lie differentiability and Legendrian foliation]\label{teo01} Let $x_3 = u(x_1, x_2)$, $(x_1, x_2)\in \O$ be a Lipschitz continuous vanishing viscosity minimal graph.
 The flow of the vector $X_{1,u}$ yields a foliation of the domain $\O$  by polynomial curves $\gamma$ of degree two. For every fixed $x_0 \in \O$
denote by $\gamma$ the   unique leaf  passing through that fixed
point. The function $u$ is differentiable at $x_0$ in the Lie sense along
$\gamma$ and the equation \eqref{eq0bis} reduces to
$$\frac{d^2}{dt^2}(u (\gamma(t)))=0.$$
\end{corollary}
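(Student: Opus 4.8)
The plan is to read off all three assertions from Theorem~\ref{main}, which is available at this point. Since $u\in C^{1,\alpha}_{\loc}(\Om)$, the vector field $X_{1,u}=\p_1+u\,\p_2$ has locally Lipschitz coefficients, so by the Cauchy--Lipschitz theorem there passes through every $x_0\in\Om$ a unique maximal integral curve $\gamma=(\gamma_1,\gamma_2)$ of $X_{1,u}$, and these curves foliate $\Om$. Normalizing $\gamma(0)=x_0$, the equation $\dot\gamma_1=1$ gives $\gamma_1(t)=x_{0,1}+t$, while $\dot\gamma_2(t)=u(\gamma(t))$.

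Next I set $\phi(t):=u(\gamma(t))$. Because $u\in C^1$ and $\gamma\in C^1$, the chain rule gives $\dot\phi(t)=(\p_1u+u\,\p_2u)(\gamma(t))=(X_{1,u}u)(\gamma(t))$, and since $X_{1,u}u\in C^{\alpha}_{\loc}(\Om)$ by Theorem~\ref{main}, $\dot\phi$ is continuous, hence $\phi\in C^1$ and $\gamma_2\in C^2$. The core of the argument is the claim that $v:=X_{1,u}u$ is constant along each leaf $\gamma$. Granting this, $\dot\phi(t)\equiv v(x_0)$, hence
\[
\phi(t)=u(x_0)+(X_{1,u}u)(x_0)\,t ,
\]
which is affine; this yields at once the identity $\frac{d^2}{dt^2}\big(u(\gamma(t))\big)=0$, the Lie differentiability of $u$ along $\gamma$ at $x_0$ (indeed $\phi$ is then $C^\infty$), and, upon integration, $\gamma_2(t)=x_{0,2}+u(x_0)\,t+\tfrac12(X_{1,u}u)(x_0)\,t^2$, so each leaf is the graph of a polynomial of degree at most two in the $x_1$-variable.

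To prove that $v=X_{1,u}u$ is constant along leaves I would exploit the remaining output of Theorem~\ref{main}: $v\in W^{1,p}_{\loc}(\Om)$ for every $p$, $v$ is continuous, and $X_{1,u}v=X^2_{1,u}u=0$ holds as an $L^p$-identity (in fact everywhere). Since $u\in C^{1,\alpha}_{\loc}$, the vector field $X_{1,u}$ is $C^1$, so its flow $\Phi(t,s)$ is a $C^1$ map with $C^1$ inverse on a flow-box neighbourhood of any point, with $s$ labelling the leaves; composition with this bi-Lipschitz change of variables keeps $v\circ\Phi$ in $W^{1,p}_{\loc}$ and turns the transport identity into $\p_t(v\circ\Phi)=0$ a.e. By Fubini, for a.e.\ $s$ the absolutely continuous function $t\mapsto v(\Phi(t,s))$ has a.e.\ vanishing derivative, hence is constant in $t$, and continuity of $v$ promotes this to every leaf. (Alternatively one may pass to the limit in the analogous identities for the smooth approximants $u_j$ of Definition~\ref{visc}, using the a-priori estimates behind Theorem~\ref{main} to secure $C^1_{\loc}$ convergence of $u_j$ and of $X_{1,u_j}u_j$.)

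The main obstacle is precisely this last step: a priori $v=X_{1,u}u$ is only a Sobolev function, so the naive chain rule $\frac{d}{dt}(v\circ\gamma)=(X_{1,u}v)\circ\gamma$ — and even the mere differentiability of $v\circ\gamma$ — cannot be taken for granted along the measure-zero set swept out by a single leaf. What rescues the computation are two facts already supplied by Theorem~\ref{main}: the $C^1$-regularity of the flow, which comes from $u\in C^{1,\alpha}$ and allows one to straighten $X_{1,u}$, and the genuine continuity of $v$, which upgrades ``constant along almost every leaf'' to ``constant along every leaf.''
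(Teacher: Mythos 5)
Your argument is correct, but it follows a genuinely different route from the paper's. The paper proves Corollary \ref{teo01} in a few lines by invoking Proposition \ref{lieweak} (quoted from \cite[Remark 5.6]{CLM}): since, by Theorem \ref{main} and Propositions \ref{prop41}--\ref{prop43}, both $u$ and $X_{1,u}u$ are in $C^{\alpha}_{loc}$ with weak $X_{1,u}$-derivatives in $C^\alpha_{loc}$ and $\p_2$-derivatives in $L^p_{loc}$, $p>1/\alpha$, the pointwise Lie derivatives along the flow exist everywhere and coincide with the weak ones; writing the leaf as $\gamma(x)=(x,y(x))$ with $y'=u(x,y)$, this gives directly $y''=X_{1,u}u(\gamma)$ and $y'''=X^2_{1,u}u(\gamma)=0$. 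You instead re-derive the needed fact — that $v=X_{1,u}u$ is constant along every leaf — from scratch: you use $u\in C^{1,\alpha}$ to get a $C^1$ flow-box straightening of $X_{1,u}$, push $v\in W^{1,p}_{loc}$ through this change of variables, convert $X_{1,u}v=0$ a.e.\ into $\p_t(v\circ\Phi)=0$ a.e., apply the ACL characterization plus Fubini to get constancy along almost every leaf, and use continuity of $v$ to upgrade to every leaf; integrating then yields the affine behaviour of $u\circ\gamma$ and the degree-two leaves. In effect you prove, in a self-contained way, the special case of Proposition \ref{lieweak} that the corollary needs, which makes your proof longer but independent of the external citation; the paper's route is shorter and its cited lemma works under weaker hypotheses (only H\"older regularity with $p>1/\alpha$, no $C^1$ flow needed), though in the present situation both sets of hypotheses are available from Theorem \ref{main}. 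Your identification of the steps where the naive chain rule fails, and of the role of continuity of $X_{1,u}u$, matches exactly the issue that Proposition \ref{lieweak} is designed to handle.
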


\paragraph{\bf Comparison with other regularity results}
We describe the relation between our results
in Theorem \ref{main} and Corollary  \ref{teo01} and
the regularity results in \cite{chy:regular} and in \cite{fsc-bigolin}. In \cite{chy:regular} Cheng, Hwang and Yang prove that any $C^1$ weak solution of the prescribed (continuous) horizontal mean curvature PDE, has $C^2$ smooth Legendrian foliation outside of the characteristic set.
 In \cite{fsc-bigolin}, Bigolin and Serra Cassano study the 
regularity of minimal intrisic graphs  \eqref{int-graph-1}
where  \eqref{eq0bis} is interpreted in a weak sense  (i.e. $broad^*$ solutions
defined in  \cite[Definition 3.1]{fsc-bigolin}) and prove (among other results) that Lipschitz regularity
of the intrisic gradient $X_{1,u}u$ implies the Euclidean Lipschitz regularity of the function $u$. 
In the present  paper we require only Lipschitz continuity  of $u$ and prove higher order
 intrinsic differentiability than either \cite{fsc-bigolin}, and \cite{chy:regular}.  On the other hand,
 we  only deal with the case $H_0=0$ and with those  solutions which are  limits of Riemannian minimal graphs. In this sense our results are more specialized than the ones in the
 other two papers.
 
\paragraph{\bf Applications of Theorem \ref{main}} 
Invoking the implicit function theorem, we 
can apply Theorem \ref{main} to study  the
regularity   away from the characteristic locus of the Lipschitz
perimeter minimizers found in \cite{chy}. Since the results in that paper apply
to $t-graph$ and not intrinsic graphs we need some extra assumptions on the convergence of the approximating solution to be able to invoke our intrinsic graphs regularity.
Here and in the following
$\nabla_E$ denotes the Euclidean gradient in $\R^{2}$,
and $(z_1,z_2,z_3)$ are the exponential coordinates
$\exp(z_1 \mS+z_2 \mX+z_3 \mY)=\exp(x_3 \mathcal S)\exp(x_1 \mathcal X + x_2 \mathcal Y)$.

\begin{corollary}\label{chy}
Let $O\subset \R^{2}$ be a strictly  convex, smooth open set,
$\phi\in C^{2,\alpha}(\bar O)$ and
for each $(z_1,z_2)\in O$ denote by $(z_1,z_2)^*=(z_2,-z_1)$. 
Consider the family  $$\{g_\e(z_1,z_2)\}_\e \ \ 
\sup_O |g_\e|+\sup_O|\nabla_E g_\e| \le C \ \ \  \text{(uniformly in }\e),$$
of   smooth  solutions of 
the approximating minimal surface PDE
$$div\Bigg(\frac{\nabla_E g_\e + (z_1,z_2)^*}{\sqrt{\e^2+|\nabla_E g_\e + (z_1,z_2)^*|}}\Bigg)=0 \text{ in }O \ \text{ and }g_\e=\phi \ \text{ in }\p O$$
found in \cite[Theorem 4.5]{chy}.  If for $p_0=(p_0^1,p_0^2)\in O$, $a>0$ and for every $\e>0$ we have $|\p_{z_1} g_\e(p_0)|>a>0$  (or any other partial derivative 
is non-vanishing at $p_0$ uniformly in $\e$) then 
there is a sequence $\e_k\to 0$ such that the Lipschitz perimeter minimizer $g=\lim_{\e_k\to 0}g_{\e_k}$ satisfies 
$g\in C^{1,\alpha} $ and is infinitely many times differentiable in the direction
of the Legendrian foliation of $z_3=g(z_1,z_2)$, in a neighborhood
of the point $p_0$.
\end{corollary}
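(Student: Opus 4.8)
The plan is to reduce the statement to a direct application of Theorem \ref{main} together with the implicit function theorem, with the only real work being the bookkeeping that converts the $t$-graph picture of \cite{chy} into the intrinsic graph picture in which Theorem \ref{main} is phrased. First I would recall from \cite[Theorem 4.5]{chy} that the functions $g_\e$ solve the stated elliptic regularization in the exponential coordinates $(z_1,z_2,z_3)$ and that, by the quoted uniform $C^{0,1}$ bound $\sup_O|g_\e|+\sup_O|\nabla_E g_\e|\le C$, a subsequence $g_{\e_k}$ converges locally uniformly to a Lipschitz perimeter minimizer $g$. The surface $z_3=g(z_1,z_2)$ is a $t$-graph; its characteristic locus is the set where its Euclidean normal is vertical, and away from that locus the classical implicit function theorem (in the form quoted in the introduction, cf. \cite{frss:carnot,CittiManfredini,ascv}) lets us rewrite each $g_{\e_k}$, and the limit $g$, locally as an intrinsic graph $x_3=u_k(x_1,x_2)$, resp. $x_3=u(x_1,x_2)$, in polarized coordinates. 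The hypothesis $|\p_{z_1}g_\e(p_0)|>a>0$ uniformly in $\e$ is exactly what guarantees that $p_0$ is non-characteristic and, more importantly, that this change of chart can be performed on a fixed neighborhood of $p_0$ \emph{uniformly} in $k$, so that the resulting $u_k$ remain bounded in $\Lip$ of that neighborhood and converge locally uniformly to $u$.

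Next I would check that the transformed functions $u_k$ satisfy condition (i) of Definition \ref{visc}, i.e. that the $g_\e$-minimal surface equation for $t$-graphs transforms into the intrinsic-graph regularized equation $\sum_{i=1}^2 X_{i,u_k}\big(X_{i,u_k}u_k/\sqrt{1+|\nabla_{\e_k}u_k|^2}\big)=0$. This is a coordinate computation: the two PDEs are the two coordinate expressions of the single geometric condition $H_{\e_k}=0$ (vanishing of the mean curvature in the metric $g_{\e_k}$), so they must agree wherever both graph representations are valid; one only needs to track that the Riemannian parameter $\e$ is carried along correctly under the change of coordinates, which it is since $g_\e\to g_0$ compatibly with the dilation structure. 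Smoothness of $u_k$ on the neighborhood follows from smoothness of $g_{\e}$ and of the implicit-function diffeomorphism. Together with the uniform Lipschitz bound and local uniform convergence established above, this shows that $u$ is a vanishing viscosity solution of \eqref{eq0bis} in a neighborhood $\Om'$ of the polarized image of $p_0$.

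At that point Theorem \ref{main} applies verbatim and gives $u\in C^{1,\al}(K)$ for every $K\subset\subset\Om'$ and $X^k_{1,u}u\in W^{1,p}_{loc}(\Om')$ for all $k\in\N$, $p>1$; by the last assertion of that theorem $X^k_{1,u}u\in C^\al_{loc}$, so $u$ is infinitely many times Lie-differentiable along the Legendrian foliation of $x_3=u(x_1,x_2)$. Finally I would transport this conclusion back to the original $t$-graph: since $g$ and $u$ are related on a neighborhood of $p_0$ by the smooth implicit-function diffeomorphism, and since this diffeomorphism maps the Legendrian foliation of one surface to that of the other (both being the foliation by horizontal tangent curves, a coordinate-free object), $C^{1,\al}$ regularity and smoothness along the Legendrian leaves pass from $u$ to $g$ near $p_0$. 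The main obstacle is the \emph{uniformity} step in the first paragraph: one must be careful that the implicit function theorem, applied to the family $g_{\e_k}$, produces intrinsic-graph representatives on one common neighborhood of $p_0$ with Lipschitz norms bounded independently of $k$ — this is precisely where the uniform lower bound $|\p_{z_1}g_\e(p_0)|>a$ and the uniform $C^{0,1}$ bound on $g_\e$ are both essential, and it is what makes the limit function qualify as a vanishing viscosity solution rather than merely a pointwise limit.
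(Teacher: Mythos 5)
Your proposal is correct and follows essentially the same route as the paper's own (very brief) proof: apply the implicit function theorem to rewrite the level sets $z_3-g_\e(z_1,z_2)=0$ as smooth intrinsic graphs $x_3=u_\e(x_1,x_2)$ near $p_0$, use the uniform Lipschitz bounds on $g_\e$ from \cite{chy} to get uniform Lipschitz bounds on $u_\e$, and then invoke Theorem \ref{main}. The extra details you supply --- the uniform non-degeneracy $|\p_{z_1}g_\e(p_0)|>a$ giving a fixed common neighborhood, the check that the $t$-graph and intrinsic-graph PDEs express the same condition $H_{\e}=0$ so that Definition \ref{visc}(i) holds, and the transport of regularity back to $g$ --- are exactly the points the paper leaves implicit.
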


\begin{proof}
The implicit function theorem implies that the level set of $$z_{3}-g_\e(z_1,z_2)$$ can be written as  smooth intrinsic graphs $x_3=u_\e(x_1,x_2)$ in a neighborhood $\Omega$ of
$(p_0^1,p_0^{2},g(p_0))$. The Lipschitz bounds on $g_\e$ (proved in \cite[Propositions 4.2-4]{chy})
yield uniform Lipschitz bounds on $u_\e$, thus allowing to apply
Theorem \ref{main} and conclude the proof.
\end{proof}

From this result one may conclude immediately the following

\begin{corollary}
 Let $z_3 = g(z_1, z_2)$, $(z_1, z_2)\in \O\subset \R^2$ be a $C^1$  minimal graph
 which is the $C^1$ limit of Riemannian minimal graphs as in  \cite[Theorem 4.5]{chy}. In the 
 neighborhood of any non-characteristic point $g\in C^{1,\alpha} $ and is infinitely many times differentiable along the Legendrian foliation.
\end{corollary}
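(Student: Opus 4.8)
The plan is to obtain this corollary directly from Corollary \ref{chy}; the only thing to verify is that, at a non-characteristic point of the limit graph, the uniform-in-$\e$ hypothesis of that corollary holds automatically because of the assumed $C^1$ convergence.

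Fix a non-characteristic point $p_0=(p_0^1,p_0^2,g(p_0^1,p_0^2))$ of $M=\{z_3=g(z_1,z_2)\}$ and write $M=\{f=0\}$ with $f(z)=z_3-g(z_1,z_2)$ in the exponential coordinates $(z_1,z_2,z_3)$. Expanding the horizontal left-invariant fields $\mS,\mX$ in these coordinates, the horizontal gradient of $f$ along $M$ is a nonvanishing multiple of the planar vector $\nabla_E g+(z_1,z_2)^*$, i.e.\ of the vector appearing under the divergence in the approximating equation of Corollary \ref{chy}; hence $p_0$ is non-characteristic precisely when $\nabla_E g+(z_1,z_2)^*\ne 0$ at $(p_0^1,p_0^2)$. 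Left-translating so that $p_0$ becomes the origin — which preserves both the class of $t$-graphs and the approximating equations, the dilated metrics $g_\e$ being left-invariant, and which sends the correction term $(z_1,z_2)^*$ to $0$ at $p_0$ — this condition becomes simply $\nabla_E\tilde g(0)\ne 0$, so that $|\p_{z_i}\tilde g(0)|>0$ for at least one $i\in\{1,2\}$.

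Now I would use the hypothesis that $g$ is the $C^1$ limit of the Riemannian minimal graphs $g_{\e_k}$ of \cite[Theorem 4.5]{chy}: since $\nabla_E\tilde g_{\e_k}\to\nabla_E\tilde g$ locally uniformly, there exist $a>0$, a neighbourhood $U$ of the origin, and $k_0$ with $|\p_{z_i}\tilde g_{\e_k}|\ge a$ on $U$ for every $k\ge k_0$; together with uniform $C^0$ and $C^1$ bounds on the $g_{\e_k}$ near $p_0$ (immediate from the assumed $C^1$ convergence, and also recorded in \cite[Theorem 4.5, Propositions 4.2-4]{chy}), this is exactly the standing assumption of Corollary \ref{chy}. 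Applying Corollary \ref{chy} yields a subsequence along which the Lipschitz perimeter minimizer is $C^{1,\alpha}$ and infinitely differentiable along the Legendrian foliation in a neighbourhood of $p_0$, and since the full sequence $g_{\e_k}$ already converges to $g$ this minimizer is $g$ itself; as $p_0$ was an arbitrary non-characteristic point, this proves the claim. The one step requiring care — and hence the main, if modest, obstacle — is the coordinate bookkeeping of the second paragraph: identifying the horizontal gradient of $z_3-g$ with $\nabla_E g+(z_1,z_2)^*$, checking the invariance of the approximating equations under left translation, and confirming that $C^1$ convergence propagates the strict lower bound. Everything downstream is a verbatim appeal to Corollary \ref{chy}, and through it to Theorem \ref{main}.
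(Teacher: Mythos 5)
Your proposal is correct and follows the same route the paper intends: the paper states this corollary as an immediate consequence of Corollary \ref{chy}, and your argument is precisely that deduction, using the assumed $C^1$ convergence to transfer the non-vanishing of the horizontal gradient $\nabla_E g+(z_1,z_2)^*$ at a non-characteristic point to a uniform-in-$\e$ lower bound for the approximating solutions. Your extra step of left-translating $p_0$ to the origin (so that non-characteristicity becomes $\nabla_E\tilde g(0)\ne 0$, matching the literal hypothesis of Corollary \ref{chy}, while the equation, the class of $t$-graphs and the uniform bounds are preserved by left-invariance) is just a careful filling-in of the bookkeeping the paper leaves implicit, not a different method.
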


 Theorem \ref{main} can be also used to rule out minimal intrinsic graphs which do not arise
 as limits of Riemannian minimal graphs. For instance, the example exhibited above
 $x_3=u(x_1, x_2)=\frac {x_2}{x_1-sgn\,  (x_2)}$ defined in $\Omega=\{(x_1,x_2)\in
\R^2: x_1>1\}$ 
 cannot be a vanishing viscosity minimal graph as it lacks the $C^{1,\alpha}$ regularity
 from Theorem \ref{main}.

\paragraph{\bf Sketch of the proof and final  remarks}
We now turn to a description of the techniques used in the proof of Theorem \ref{main}. Since $u$ is a vanishing viscosity solution, this theorem is proved
by means of a priori estimates (uniform
in the parameter $\e$ as it decreases to zero) for each element of the
approximating sequence $u_j$ of solutions of  \eqref{eqebis}.

 The PDE  \eqref{eq0bis}, has a structure similar to the Levi equation in $\R^3$.
 In fact,  the Levi equation can be represented as Riemannian approximation of a sum of squares of vector fields
 $$Z^2_u + W_uu^2 + \e^2 \p_{2}^2u=0,$$
 for suitable non linear vector fields $Z_u,$ $W_u$ 
 depending on the solution $u$. 
 Regularity results for Lipschitz continuous viscosity solutions were established  in \cite{CLM} and 
 \cite{CM}. The techniques used in these papers are based on 
  a
 modification of the Moser iteration, along with representation formula and
 uniform estimates on the fundamental solution. In \cite{CM},
 \cite{CittiTomassini}  the authors address properties of the analogue of the Legendrian foliation  for a  Levi flat graph. 
 
 The cited work  on the Levi equations provides a coarse outline and a strategy for the  proof of Theorem \ref{main}. 
However,  
the equation  \eqref{eq0bis}  presents  additional difficulties (lack of a background sub-Laplacian, worse nonlinearity) with respect to  the Levi equation, and the 
adaptation of the known techniques is very non trivial and requires new ideas.

The first step in the proof involves the  study of a {\it linearization} (of sorts) of  \eqref{eqebis} (for simplicity we will refer
to the approximating functions $u_j$ simply as $u$)

 \begin{equation}\label{eqebi1s} M_{\e,u} z= \sum_{i=1}^2 X_{i ,u}\left( \frac{X_{i, u} z}{\sqrt{1+|\nabla_\e u|^2}}\right)=0, \text{ in } \Om\subset \R^2.
\end{equation}

There are two main difficulties in establishing a-priori estimates, uniform in $\e$, for 
this PDE: 

\begin{itemize}
\item The first problem is due to the fact that  the coefficients of the equation involve the function $u$ which although smooth satisfies a-priori bounds which are uniform  in $\e$  only for the Lipschitz norm. To deal with this obstacle we operate a
{\it freezing} argument, substituting the function $u$ with an analogue of
its first order Taylor polynomial, and then carefully study the remainder terms.
The regularity of rough coefficients degenerate elliptic PDE has been studied by many authors, see for instance the monograph \cite{sawhe} for a survey of the literature and
new, ground-breaking techniques.

\item The second difficulty stems from the fact that, although \eqref{eqebi1s} is elliptic, its coerciveness
degenerates as $\e\to 0$. Now, the approximation of degenerate elliptic operators
with elliptic {\it regularization} is a well known and widely used trick. For instance
in \cite{hormander}, the sub-Laplacian $\sL u= \sum_{i=1}^m X_i^2u$ associated to 
a system of H\"ormander vector fields is approximated by $\sL_{\e} u= \sL u+ \e \Delta u$. While it is true that the ellipticity of $\sL_\e$ degenerates as $\e\to 0$, and hence
the constants involved in elliptic estimates degenerate as well, the operator
$\sL_\e$ satisfies {\it sub-elliptic }estimates, which are uniform in $\e$.
In our case however, the left-hand side of equation  \eqref{eqebi1s} approximates not a sub-Laplacian
but the operator  $X_{1,u}(X_{1,u} z/\sqrt{1+(X_{1,u}u)^2})$ which is not sub-elliptic.
To solve this problem, and obtain  the regularity in $L^p$ of the derivatives of $z$,
  we introduce a completely new ad-hoc  lifting process,  inspired in part by Rothschild and Stein's techniques in \cite{Roth:Stein}. The vector fields $X_{1,u}, X_{2,u}$
  are lifted to a three-dimensional space $\Omega\times(-1,1)$ by adding
  a new variable $s$ and horizontal vector field $\p_s$. The  lifted vectors are
$\tX_1=\p_{x_1}+(u(x)+s^2)\p_{x_2}, \ \tX_2=\e\p_{x_2},  \text{ and }\tX_3=\p_s$
and the set $\{\tX_1,\tX_3\}$
 form a step-three bracket generating system whose commutators yield the direction of degeneracy $\p_{x_2}$ of  \eqref{eqebis}.
 At this point we 
  operate a freezing argument and  approximate
the   operator $M_{\e,u}$  with  higher dimensional H\"ormander type sub-Laplacians (namely
\eqref{nondivfroz} and \eqref{nondivfroz0}, 
built from the frozen, lifted vector fields.
In this way we extablish a priori $W^{2,p}$ estimates  uniform in $\e$ (Theorem
\ref{p402}), which will be the starting point 
of the regularity proof. 
\end{itemize}

In the proof of Theorem \ref{main} we will switch back and forth from representations
of \eqref{eqebis} (and its differentiated versions) in divergence and in non-divergence
form. The former works best to deal with higher regularity, via Caccioppoli estimates.
The second is tailor-made for the freezing technique and the $W^{2,p}$ estimates.

An important ingredient in the proof is the recent result in \cite{CiMa-F}, establishing
uniform estimates on the fundamental solutions of Riemannian regularizations of sub-Laplacians
(see the statement in Theorem \ref{fundam}).

\bigskip

%



\bigskip

For other aspects of minimal surfaces in the Heisenberg group,
including classification and Bernstein-type results, see
\cite{ch:minimal,gp:bernstein,bascv, dgn2,dgnp1,RR,RR1,Rito}.
 These works also contain more comprehensive lists of references.

\bigskip
\paragraph{\it Acknowledgements} The authors would like to thank
Manuel Ritor\'e,  Francesco Serra Cassano and Paul Yang  for sharing with them their preprints \cite{Rito}, \cite{chy:regular} and \cite{fsc-bigolin}.


\section{Preliminaries}

In this section we will always assume that $u$ and $f$ are  fixed smooth functions
defined on an open set $\O$ of $\R^2$,and that $u$ is a  solution of the PDE
$L_\e u = f$ in $\Om$. In particular we remark that
\begin{equation} \label{MMMM= }||u||_{L^\infty(\Omega)} + ||\nabla_{\e}u||_{L^\infty(\Omega)}+||\p_2 u||_{L^\infty(\Omega)}<\infty,
\end{equation}
and we set
\begin{equation} \label{MM}
M= ||u||_{L^\infty(\Omega)} + ||\nabla_{\e}u||_{L^\infty(\Omega)}+||\p_2 u||_{L^\infty(\Omega)}
\end{equation}
where 
 for any function $\phi$
defined on $\O$ we have let  $\nabla_\e \phi=(X_{1,u} \phi, X_{2,u}\phi)$.

We will use the notation $W^{1,p}_{\e}(\O)$, $p>1$ to denote the
Sobolev space corresponding to the norm
$||\phi||_{W^{1,p}_{\e}(\O)} =||\phi||_{L^p(\O)}+||\nabla_\e
\phi||_{L^p(\O)}$. For simplicity, unless we want to stress the
dependence on $u$, we will simply write $X_1,X_2$ instead of $X_{1, u}, X_{2,u}$.
We will denote by $W^{k,p}_0(\O)$ the space of $L^p(\Om)$ functions $\phi$ such that
$X_1^l \phi\in L^p(\Om)$ for all $1\le l \le k$.
\bigskip

We recall that, under assumption (\ref{MMMM= }), the following
result holds, (see \cite{CCM2})

\begin{proposition}\label{Calphaestimate}
Let  $u$ be a solution of equation (\ref{eqebis}) satisfying (\ref{MMMM= }). For
every compact set $K\subset\subset \Omega$ then there exist a real
number $\alpha$ and a constant $C$, only dependent on the bounds
on the constant $M$ in (\ref{MM}) and on the choice of the compact
set $K$ such that

$$||u||_{W^{2,2}_\e(K)} + ||\p_{2}u||_{W^{1,2}_\e(K)} + ||u||_{C^{1,\alpha}_u(K)}\leq C.$$
\end{proposition}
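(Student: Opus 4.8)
The plan is to establish the three a-priori bounds --- the $W^{2,2}_\e$ estimate on $u$, the $W^{1,2}_\e$ estimate on $\p_2 u$, and the $C^{1,\alpha}_u$ estimate --- by a bootstrap argument that starts from the equation \eqref{eqebis} in divergence form and exploits Caccioppoli-type inequalities adapted to the vector fields $X_{1,u}, X_{2,u}$. First I would record the basic identity obtained by differentiating \eqref{eqebis} along $X_{1,u}$ and along $\e\p_2$: because these operators do not commute ($[X_{1,u},\e\p_2] = -\e(\p_2 u)\p_2$, and the commutator of $X_{1,u}$ with itself after differentiation produces lower-order terms involving $\p_2 u$ and derivatives of $u$), the differentiated equations are of the form $M_{\e,u} (X_{i,u} u) = (\text{error terms})$, where $M_{\e,u}$ is the linearized operator from \eqref{eqebi1s} and the error terms are controlled, using \eqref{MM}, by $M$ times quadratic expressions in the first horizontal derivatives. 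The key structural point is that $\sqrt{1+|\nabla_\e u|^2}$ is bounded above and below (above by $\sqrt{1+M^2}$, below by $1$) on $\Omega$, so the coefficient matrix of $M_{\e,u}$ is comparable to the identity in the $X_{1,u},X_{2,u}$ frame uniformly in $\e$.

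Next I would test the divergence-form equation (and its $X_{1,u}$-differentiated version) against $\eta^2 X_{1,u} u$ and against $\eta^2 \p_2 u$ with $\eta$ a cutoff supported near $K$, integrate by parts, and absorb the good terms. This yields Caccioppoli inequalities of the shape
$$\int \eta^2 |\nabla_\e(X_{1,u}u)|^2 \le C(M,\eta)\Big(1 + \int_{\mathrm{supp}\,\eta} |\nabla_\e u|^2 + \int_{\mathrm{supp}\,\eta}|\p_2 u|^2\Big),$$
and an analogous one for $\int \eta^2 |\nabla_\e(\p_2 u)|^2$; the right-hand sides are finite by \eqref{MMMM= }, and the constants depend only on $M$ and on $\mathrm{dist}(K,\p\Omega)$. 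Together these give the claimed $W^{2,2}_\e(K)$ bound on $u$ and the $W^{1,2}_\e(K)$ bound on $\p_2 u$. The handling of the cross term coming from the non-commutativity of $X_{1,u}$ and $\p_2$ is where one must be careful: the factor $\e$ in $X_{2,u}=\e\p_2$ is exactly what compensates the $\e^{-1}$ that would otherwise appear, so the commutator contributions remain $\e$-uniform --- this is the main technical obstacle in this step.

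Finally, to get $u \in C^{1,\alpha}_u(K)$ I would invoke the De Giorgi--Nash--Moser theory for the linearized operator $M_{\e,u}$, viewed as a uniformly elliptic operator in the (degenerate-as-$\e\to0$) Euclidean sense but with structure constants controlled in the intrinsic $X_{1,u},X_{2,u}$ geometry: having just shown $X_{i,u}u \in W^{1,2}_\e$, each $X_{i,u}u$ is a (local) weak solution of a divergence-form equation $\mathrm{div}_{\nabla_\e}(A\,\nabla_\e(X_{i,u}u)) = \mathrm{div}_{\nabla_\e}(\mathbf{F}) + G$ with $A$ bounded and coercive and with $\mathbf{F},G \in L^2$, so the intrinsic Moser iteration yields local boundedness and then an intrinsic Hölder estimate for $X_{1,u}u$; combined with the Lipschitz bound on $u$ this upgrades to the Hölder modulus for the full Euclidean gradient measured along the Legendrian direction, i.e. the $C^{1,\alpha}_u$ norm, with $\alpha$ and $C$ depending only on $M$ and $K$. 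Since this Proposition is stated as a recollection from \cite{CCM2}, I would at this point simply cite that reference for the detailed iteration, having indicated the mechanism; the delicate part throughout is ensuring every constant is tracked to depend on $u$ only through $M$ and to stay bounded as $\e \to 0$, which is precisely the content of the ``uniform in $\e$'' claim.
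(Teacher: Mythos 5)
You should first note that the paper contains no proof of this proposition at all: it is recalled from \cite{CCM2}, so the only internal material to compare with is the machinery built later in the paper. The energy part of your sketch is consistent with that machinery: differentiating \eqref{eqebis} along $X_{1,u}$ and $\p_2$ (this is the content of Lemmas \ref{equXz} and \ref{equ2n}, themselves quoted from \cite[Lemma 3.1]{CCM2}) and testing with $\eta^2 X_{1,u}u$ and $\eta^2\p_2 u$ is exactly the $p=3$ case of the Caccioppoli inequality of Lemma \ref{lem1}; together with the uniform bound \eqref{MM} on $\|u\|_{Lip}$ this does yield the $W^{2,2}_\e(K)$ and $W^{1,2}_\e(K)$ bounds with constants depending only on $M$ and $K$, and your commutator bookkeeping ($[X_{1,u},\e\p_2]=-\e(\p_2u)\p_2$) is correct.

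The genuine gap is the H\"older step. You obtain the $C^{1,\alpha}_u$ bound from an ``intrinsic'' De Giorgi--Nash--Moser iteration for $M_{\e,u}$, on the grounds that the coefficient matrix is comparable to the identity in the $X_{1,u},X_{2,u}$ frame uniformly in $\e$. That is not enough: Moser/De Giorgi iteration in the $\nabla_\e$ geometry requires a Sobolev--Poincar\'e inequality and volume doubling for the balls of the control metric of $\{X_{1,u},\e\p_2\}$ with constants independent of $\e$, and these degenerate as $\e\to 0$, because the single vector field $X_{1,u}$ is not bracket generating and the limit operator is not subelliptic. This is precisely the obstruction the introduction of the paper emphasizes, and it is the reason the authors lift to $\Omega\times(-1,1)$, freeze, and invoke the $\e$-uniform fundamental solution estimates of \cite{CiMa-F} (Theorem \ref{fundam}) in order to get any $\e$-uniform pointwise control. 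If instead you treat $M_{\e,u}$ as a uniformly elliptic operator for fixed $\e$, the resulting H\"older constants blow up like a power of $\e^{-1}$, which is exactly what the proposition forbids. So the $C^{1,\alpha}_u$ estimate cannot be reached by the route you describe; it genuinely requires the specific arguments of \cite{CCM2} (or an apparatus of the type in Section 3 here), and your closing citation of \cite{CCM2} is doing all of the work in that step --- which, to be fair, is also all the paper itself does.
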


\subsection{An interpolation inequality}

  \begin{proposition}\label{3.3} For every $p \geq 3$,
there exists a constant $C $, dependent on $p$, and the
constant $M$ in (\ref{MM}) such that for every function $z\in
C^{\infty}(\O)$ and for every  $\phi \in C_0^\infty(\O)$, and
every $\delta>0$ 
$$ \int |X_{i}z| ^{p+1/2} \phi^{2p} \leq
\frac{C}{\delta} \int \Big(|z|^{4p+2} \phi^{2p} + |z|^{(2p+1)/2} |X_i\phi|^{(2p+1)/2} +  |z|^{(2p+1)/2} \phi^{2p} \Big) $$$$ + \delta \int
|\nabla_\e(|X_{i} z|^{(p-1)/2})|^{2} \phi^{2p} ,$$ 
where $i$
can be either $1$ or $2$.
\end{proposition}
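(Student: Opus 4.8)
The plan is to obtain the estimate from a single integration by parts, in the spirit of the classical proofs of Gagliardo--Nirenberg type inequalities, followed by repeated use of Young's inequality. Set $w:=X_iz$. Since $p\ge 3$, the function $t\mapsto |t|^{p-3/2}t=\operatorname{sign}(t)\,|t|^{p-1/2}$ is of class $C^1$, with $X_i(|w|^{p-3/2}w)=(p-\tfrac12)|w|^{p-3/2}X_iw$. Writing $|w|^{p+1/2}=(|w|^{p-3/2}w)\,(X_iz)$ and integrating by parts against $\phi^{2p}$ (there are no boundary terms, as $\phi\in C_0^\infty(\O)$), while using $\operatorname{div}X_1=\p_2u$, $\operatorname{div}X_2=0$, hence $|\operatorname{div}X_i|\le M$, I would arrive at
\begin{align*}
\int |w|^{p+1/2}\phi^{2p}
&=-(p-\tfrac12)\int z\,|w|^{p-3/2}(X_iw)\,\phi^{2p}
-2p\int z\,|w|^{p-3/2}w\,\phi^{2p-1}X_i\phi\\
&\qquad-\int z\,|w|^{p-3/2}w\,\phi^{2p}\operatorname{div}X_i .
\end{align*}
It then remains to bound each of the three terms on the right by the right-hand side of the claimed inequality.

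For the first (and only essential) term I would write $|w|^{p-3/2}|X_iw|=|w|^{p/2}\,\big(|w|^{(p-3)/2}|X_iw|\big)$ and observe that $|X_iw|\le|\nabla_\e w|$ and $|\nabla_\e(|w|^{(p-1)/2})|=\tfrac{p-1}{2}|w|^{(p-3)/2}|\nabla_\e w|$, so that $|w|^{(p-3)/2}|X_iw|\le \tfrac{2}{p-1}|\nabla_\e(|w|^{(p-1)/2})|$. Young's inequality then gives
$$|z|\,|w|^{p/2}\,|\nabla_\e(|w|^{(p-1)/2})|\,\phi^{2p}\le \delta\,|\nabla_\e(|w|^{(p-1)/2})|^{2}\phi^{2p}+\frac{C}{\delta}\,|z|^{2}|w|^{p}\phi^{2p},$$
and a further application of Young's inequality with exponents $\tfrac{2p+1}{2p}$ and $2p+1$ splits $|z|^{2}|w|^{p}\le \eta\,|w|^{p+1/2}+C_\eta\,|z|^{4p+2}$; the first summand is absorbed into the left-hand side once $\eta=\eta(p,M)$ is small, the second yields $\tfrac{C}{\delta}\int|z|^{4p+2}\phi^{2p}$. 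For the second term, after distributing the powers of $\phi$ I would apply Young's inequality with exponents $\tfrac{2p+1}{2p-1}$ and $\tfrac{2p+1}{2}$, producing a summand $\eta\,|w|^{p+1/2}\phi^{2p}$ (again absorbed) and a summand bounded by $C_\eta\,|z|^{(2p+1)/2}|X_i\phi|^{(2p+1)/2}\phi^{(2p-1)/2}$; since one may assume $0\le\phi\le 1$ (otherwise rescale $\phi$ and absorb a power of $\|\phi\|_{L^\infty}$ into $C$), this is controlled by $C_\eta\,|z|^{(2p+1)/2}|X_i\phi|^{(2p+1)/2}$. The third term is estimated by $M\int|z||w|^{p-1/2}\phi^{2p}$ and, by the same Young's inequality, by $\eta M\int|w|^{p+1/2}\phi^{2p}+C\int|z|^{(2p+1)/2}\phi^{2p}$. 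Choosing $\eta$ small enough that all the $|w|^{p+1/2}\phi^{2p}$ contributions are absorbed by the left-hand side completes the proof.

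The routine part is the integration by parts; the delicate part is the exponent bookkeeping in the repeated applications of Young's inequality. One must check that from each of the three terms one can extract exactly the power $p+1/2$ of $|X_iz|$ together with the weight $\phi^{2p}$ (so that it is absorbable on the left), while the accompanying power of $|z|$ never exceeds $4p+2$ and the accompanying powers of $\phi$ and $X_i\phi$ stay below $2p$ and $(2p+1)/2$ respectively. It is here, and in the requirement that $|w|^{p-1/2}\operatorname{sign}(w)$ be $C^1$ and that all the conjugate exponents be admissible, that the hypothesis $p\ge3$ is used.
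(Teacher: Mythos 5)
Your proof is correct and takes essentially the same route as the paper: you write $|X_iz|^{p+1/2}=(X_iz)\,|X_iz|^{p-3/2}X_iz$, integrate by parts using $X_1^*=-X_1-\p_2u$, $X_2^*=-X_2$ (equivalently $\operatorname{div}X_1=\p_2u$), obtain the same three terms, and close with the same Young/absorption bookkeeping, with the second-derivative term converted into $\delta\int|\nabla_\e(|X_iz|^{(p-1)/2})|^2\phi^{2p}$ exactly as in the paper. The only cosmetic difference is your explicit handling of the leftover factor $\phi^{(2p-1)/2}$ in the $X_i\phi$ term (via the normalization $0\le\phi\le1$), a point the paper's own H\"older step glosses over in the same way.
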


\begin{proof}
This is a slight variant of  \cite[Prop. 4.2]{CLM}.
We have $$ \int |X_iz| ^{p+1/2} \phi^{2p} = \int X_iz |X_iz|
^{p-1/2} \sign(X_iz)\phi^{2p}  =$$ (integrating by parts and using the\,
fact that $X_1^* = - X_1 - \p_2u  $ and $X_2^*=-X_2$) $$= -\int \delta_{i1} \p_2u \,z
|X_iz| ^{p-1/2} \sign(X_iz)\phi^{2p} -(p-1/2) \int z X_i^2z|X_iz|^{p-3/2}
 \phi^{2p}$$
\begin{equation}\label{eq37}
 - 2p \int z |X_iz|^{p-1/2} \sign(X_iz)\phi^{2p-1} X_i\phi \leq
\end{equation} (where $\delta_{ij} $ denotes the Kroeneker's delta, by H\"older inequality)
 $$\leq \frac{C}{\delta} \int z^{(2p+1)/2} (\phi^{2p} + |X_i\phi|^{(2p+1)/2}) +  
 \frac{C}{\delta} \int z^{4p+2} \phi^{2p}   +  $$$$ +
 \delta  \int |X_iz| ^{p+1/2} \phi^{2p} 
+ \delta \int
|\nabla_\e(|X_{i} z|^{(p-1)/2})|^{2} \phi^{2p} ,$$
 choosing $\delta>0$
sufficiently small we conclude the proof.
\end{proof}

\bigskip

A slight modification of the previous proposition, 
is the following: 
\begin{proposition}\label{interpinfinity}
  For every $p \geq 3$, for every function $z\in
C^{\infty}(\O)$ 
there exists a constant $C$, dependent on $p$, the
constant $M$ in (\ref{MM})  such that and for every  $\phi \in C_0^\infty(\O)$, and
every $\delta>0$
 $$ \int |X_{i}z| ^{p+1} \phi^{2p} \leq$$$$\leq C \left(\int \Big(z^{p+1} \phi^{2p} + z^2 |X_iz|^{p-1}\phi^{2p-2}|X_i\phi|^{2}\Big) +  
   \int |X_i^2 z|^2|X_iz|^{p-3}|z|^2\phi^{2p}\right),$$  where $i$
can be either $1$ or $2$.
\end{proposition}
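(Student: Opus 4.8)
The plan is to repeat the integration-by-parts scheme of Proposition \ref{3.3}, the only change being that we never convert the second-order term $X_i^2 z$ into a horizontal gradient $\nabla_\e\bigl(|X_i z|^{(p-1)/2}\bigr)$; keeping $X_i^2 z$ explicit is precisely what lets the whole right-hand side be written without the non-absorbable term $\delta\int|\nabla_\e(|X_i z|^{(p-1)/2})|^2\phi^{2p}$ that appears in Proposition \ref{3.3}. Since $z\in C^\infty(\O)$ and $\phi\in C_0^\infty(\O)$, every integral occurring in the argument is over the compact set $\operatorname{supp}\phi$, hence finite; this is what makes the final absorption step legitimate.

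First I would write $|X_i z|^{p+1}=(X_i z)\,\bigl((X_i z)|X_i z|^{p-1}\bigr)$ and integrate by parts the factor $X_i z=X_i(z)$, using $X_1^{*}=-X_1-\p_2 u$ and $X_2^{*}=-X_2$. Since $X_i\bigl((X_i z)|X_i z|^{p-1}\bigr)=p\,(X_i^2 z)|X_i z|^{p-1}$ and $X_i(\phi^{2p})=2p\,\phi^{2p-1}X_i\phi$, this gives
\begin{align*}
\int |X_i z|^{p+1}\phi^{2p}
&=-p\int z\,(X_i^2 z)|X_i z|^{p-1}\phi^{2p}
-2p\int z\,(X_i z)|X_i z|^{p-1}\phi^{2p-1}X_i\phi\\
&\qquad-\delta_{i1}\int z\,(\p_2 u)(X_i z)|X_i z|^{p-1}\phi^{2p}.
\end{align*}

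Then I would bound the three terms by Young's inequality with a small parameter $\eta>0$, in each case extracting a factor $|X_i z|^{(p+1)/2}\phi^p$. For the first term write $|X_i z|^{p-1}=|X_i z|^{(p-3)/2}\,|X_i z|^{(p+1)/2}$ --- here the hypothesis $p\ge3$ is used so that the exponent $(p-3)/2$ is nonnegative --- obtaining $\eta\int|X_i z|^{p+1}\phi^{2p}+C_\eta\int z^2|X_i^2 z|^2|X_i z|^{p-3}\phi^{2p}$. For the second term split $|X_i z|^{p}=|X_i z|^{(p-1)/2}|X_i z|^{(p+1)/2}$ and $\phi^{2p-1}=\phi^{p-1}\phi^{p}$ to get $\eta\int|X_i z|^{p+1}\phi^{2p}+C_\eta\int z^2|X_i z|^{p-1}\phi^{2p-2}|X_i\phi|^2$. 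For the third term use $\|\p_2 u\|_{L^\infty(\O)}\le M$ from \eqref{MM} and Young's inequality with exponents $p+1$ and $(p+1)/p$ to get $\eta M\int|X_i z|^{p+1}\phi^{2p}+C_{\eta,p,M}\int|z|^{p+1}\phi^{2p}$. Summing and then choosing $\eta$ small, depending only on $p$ and $M$, so that the total coefficient of $\int|X_i z|^{p+1}\phi^{2p}$ on the right is at most $1/2$, I would absorb that term into the left-hand side and obtain the claim. There is essentially no obstacle beyond bookkeeping: the only points requiring care are keeping the fractional powers of $|X_i z|$ nonnegative (which forces $p\ge3$) and the finiteness of all the integrals (immediate from smoothness of $z$ and compact support of $\phi$); the case $i=2$ is identical but simpler, since the $\delta_{i1}$ term disappears.
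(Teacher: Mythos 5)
Your proposal is correct and follows essentially the same route as the paper: integrate by parts against $X_i$ using $X_1^{*}=-X_1-\p_2u$, $X_2^{*}=-X_2$, which produces exactly the same three terms (the $X_i^2z$ term kept explicit, the $X_i\phi$ term, and the $\delta_{i1}\,\p_2u$ term), each then handled by Young/H\"older with a small parameter and absorbed into the left-hand side. The only differences are cosmetic (your $\eta$ versus the paper's $\delta$, and your explicit remark on why $p\ge 3$ is needed for the exponent $(p-3)/2$).
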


\begin{proof} We have $$ \int |X_iz| ^{p+1} \phi^{2p} = \int X_iz |X_iz|
^{p} \sign(X_iz)\phi^{2p}  =$$ (integrating by parts and using the
fact that $X_1^* = - X_1 - \p_2u  $ and $X_2^*=-X_2$) $$= -\int \delta_{i1}\p_2u\, z\,
|X_iz| ^{p} \sign(X_iz)\phi^{2p} -p \int z X_i^2z|X_iz|^{p-1}
 \phi^{2p}$$
\begin{equation}\label{eq37}
 -2p \int z |X_iz|^{p} \sign(X_iz)\phi^{2p-1} X_i\phi \leq
\end{equation} (by H\"older inequality)
 $$\leq \frac{C}{\delta} \int \Big(z^{p+1} \phi^{2p} + z^2 |Xz |^{p-1}\phi^{2p-2}|X_i\phi|^{2}\Big) +  
 \delta  \int |X_iz| ^{p+1} \phi^{2p} 
+ \frac{C}{\delta} \int |z|^2|X_i^2 z|^2|X_iz|^{p-3}\phi^{2p},$$ choosing $\delta$
sufficiently small we obtain the desired inequality.
\end{proof}

%

\subsection{The  horizontal mean curvature  as a divergence form operator}

We now  prove  that if $u$ is a smooth solution of equation (\ref{eqebis})  
then its   derivatives $\p_{2} u$ and $X_{k} u$ are solution of a
similar mean curvature equation with different right hand side (see
also \cite[Lemma 3.1]{CCM2}).
Differentiating the equation $L_{\e}u=0$  with respect to $X_{k}$
one obtains

\begin{lemma}\label{equXz} If $u$ is a smooth solution of $L_{\e}u=0$ then $z=X_{k} u$
with $k\leq 2$ is a solution of the equation
\begin{multline}\label{PDEXu}
X_{i}  \Bigg( \frac{a_{ij}(\nabla_\e u)}{\sqrt{1 +|\nabla_\e u|^2}}
X_{j}z \Bigg) \\ =-[X_{k}, X_{i}] \Bigg(\frac{X_{i}u}{\sqrt{1 +
|\nabla_{\e} u |^2}}\Bigg) \\ - X_{i} \Bigg(\frac{a_{ij}(\nabla_\e
u)}{\sqrt{1 +|\nabla_{\e} u|^2}} [X_{k}, X_{j}] u\Bigg),
\end{multline}
where $a_{ij}$ are defined as 
\begin{equation}\label{aij} a_{ij}:R^{2}\rightarrow
R\quad  a_{ij}(p)= \delta_{ij} - \frac{p_ip_j}{1 + |p|^2}.\end{equation}
\end{lemma}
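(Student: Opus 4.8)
The plan is to differentiate the PDE $L_\e u = 0$ in the direction $X_k$ and track how the $X_k$-derivative interacts with the nonlinear coefficient and with the vector fields $X_i = X_{i,u}$, whose coefficients themselves depend on $u$. Writing $L_\e u = X_i\big(X_i u/\sqrt{1+|\nabla_\e u|^2}\big) = 0$ (summation over $i=1,2$), I would apply $X_k$ to both sides and commute $X_k$ past the outer $X_i$, producing a commutator term $[X_k,X_i]\big(X_i u/\sqrt{1+|\nabla_\e u|^2}\big)$ which I move to the right-hand side; this yields the first term on the RHS of \eqref{PDEXu}. It remains to expand $X_i X_k\big(X_i u/\sqrt{1+|\nabla_\e u|^2}\big)$ and identify $z = X_k u$ inside.

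Next I would handle the inner expression. Applying $X_k$ to $X_i u/\sqrt{1+|\nabla_\e u|^2}$, the chain rule gives
$$X_k\!\left(\frac{X_i u}{\sqrt{1+|\nabla_\e u|^2}}\right) = \frac{X_k X_i u}{\sqrt{1+|\nabla_\e u|^2}} - \frac{X_i u\,(\nabla_\e u\cdot X_k\nabla_\e u)}{(1+|\nabla_\e u|^2)^{3/2}}.$$
Here one must be careful: $X_k\nabla_\e u$ means $X_k$ applied componentwise to $(X_1 u, X_2 u)$, and $X_k X_j u = X_j X_k u + [X_k,X_j]u = X_j z + [X_k,X_j]u$. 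Substituting this and collecting the terms in which every appearance of the highest-order derivative of $u$ has been converted to a derivative of $z$, one obtains exactly the combination $a_{ij}(\nabla_\e u)X_j z/\sqrt{1+|\nabla_\e u|^2}$, using the definition \eqref{aij} of $a_{ij}$: the $\delta_{ij}$ piece comes from $X_k X_i u \to X_i z$ and the $-p_ip_j/(1+|p|^2)$ piece comes from the second (chain-rule) term with $p=\nabla_\e u$. The leftover terms are precisely the commutator contributions $[X_k,X_j]u$, which remain outside the $z$-derivatives and, after reapplying the outer $X_i$, form the last term $-X_i\big(a_{ij}(\nabla_\e u)[X_k,X_j]u/\sqrt{1+|\nabla_\e u|^2}\big)$ on the RHS of \eqref{PDEXu}.

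The one genuinely delicate point, which I would treat as the main obstacle, is the bookkeeping caused by the fact that the vector fields $X_i = X_{i,u} = \p_1 + u\p_2$ (for $i=1$) have $u$-dependent coefficients, so $X_k$ does not simply commute with $X_i$ as a constant-coefficient field would; one must use $[X_k,X_j]u$ consistently and verify that no second-order-in-$u$ terms are left unaccounted for. I would organize the computation by first writing $X_i X_k(\cdot) = X_i\big(X_k(\cdot)\big)$, fully expanding the inner $X_k(\cdot)$ as above, substituting $X_k X_j u = X_j z + [X_k,X_j]u$ in every slot, and then grouping the $X_j z$ terms against the $[X_k,X_j]u$ terms. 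A short check that $\partial_i\big(X_i u/\sqrt{\cdots}\big)$-type algebra produces the symmetric matrix $a_{ij}$ as in \eqref{aij} finishes it. Since $X_j z$ appears linearly, the resulting equation is the asserted linear (in $z$) divergence-form PDE with the stated inhomogeneous terms.
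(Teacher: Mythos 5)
Your computation is correct and is exactly the argument the paper intends: it states the lemma simply as the result of differentiating $L_\e u=0$ along $X_k$, commuting $X_k$ past the outer $X_i$, and using $X_kX_ju=X_jz+[X_k,X_j]u$ together with the chain rule to produce the matrix $a_{ij}$ and the two commutator terms. Your grouping of the $X_jz$ terms against the $[X_k,X_j]u$ terms (including the piece left over from $X_kX_iu$) reproduces \eqref{PDEXu} precisely, so there is nothing to add.
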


\begin{lemma}\label{equ2n}
If $u$ is a smooth solution of $L_{\e}u=0$ then $v=\p_{2} u $ is a
solution of the equation
\begin{multline}\label{2nderiv} 
X_{i}  \Big( \frac{a_{ij}(\nabla_\e u)}{\sqrt{1 +|\nabla_\e u|^2}}
X_{j}v \Big) \\= -\frac{a_{11}(\nabla_\e
u)}{\sqrt{1 +|\nabla_{\e} u|^2}} v^3 - \frac{a_{1j}(\nabla_\e
u)}{\sqrt{1 +|\nabla_{\e} u|^2}}v X_j v \\- X_{i} \Big(\frac{a_{i1}(\nabla_\e
u)}{\sqrt{1 +|\nabla_{\e} u|^2}}  v^2\Big)
\end{multline}
where $a_{ij}$ are defined in (\ref{aij}).
\end{lemma}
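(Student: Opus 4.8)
The plan is to differentiate the equation $L_\e u = 0$ with respect to the vector field $\p_2$, in complete analogy with the proof of Lemma \ref{equXz}, but keeping careful track of the fact that $\p_2$ does not commute with the nonlinear vector fields $X_{1,u}=\p_1+u\p_2$ and $X_{2,u}=\e\p_2$. First I would record the relevant commutators: since $X_{1,u}=\p_1+u\p_2$ we have $[\p_2,X_{1,u}] = (\p_2 u)\,\p_2 = v\,\p_2$, where $v=\p_2 u$, and $[\p_2,X_{2,u}]=0$; moreover $\p_2 = \tfrac1v(X_{1,u}-\p_1)$ is not needed, but the cleaner identity $v\,\p_2 = [\p_2,X_{1,u}]$ will be used repeatedly. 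Note also that $\p_2 = \e^{-1}X_{2,u}$, so $X_j v = X_{j,u}\p_2 u$ can be rewritten in terms of second derivatives of $u$ when convenient, and in particular $v\,\p_2\phi = v X_{1,u}$-type terms will reappear as the source of the $vX_jv$ and $v^2$, $v^3$ contributions.

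Next I would apply $\p_2$ to the divergence-form equation $X_{i,u}\big(a_{ij}(\nabla_\e u)(1+|\nabla_\e u|^2)^{-1/2} X_{j,u}u\big)=0$. Differentiating produces three kinds of terms: (a) the term where $\p_2$ lands on the inner $X_{j,u}u$, giving $X_{i,u}\big(b_{ij}\,\p_2(X_{j,u}u)\big)$ where $b_{ij}=a_{ij}(\nabla_\e u)(1+|\nabla_\e u|^2)^{-1/2}$; (b) the term where $\p_2$ lands on the coefficient $b_{ij}$, which by the chain rule is a multiple of $\p_2(\nabla_\e u)$ and hence, after commuting, of $v$ and $X_jv$; and (c) the commutator terms coming from $[\p_2,X_{i,u}]=v\p_2$ acting on the outer divergence and from $[\p_2,X_{j,u}]=v\p_2$ inside. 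Collecting (a), one wants the principal part to read $X_{i,u}(b_{ij}X_{j,u}v)$; the discrepancy between $\p_2 X_{j,u}u$ and $X_{j,u}\p_2 u = X_{j,u}v$ is exactly the commutator $[\p_2,X_{j,u}]u$, which for $j=1$ is $v^2$ and for $j=2$ is $0$. This is the origin of the term $-X_{i,u}\big(b_{i1}v^2\big)$ on the right-hand side of \eqref{2nderiv}. The coefficient-differentiation terms (b), using $a_{ij}$ and the identity $\p_2(1+|\nabla_\e u|^2)^{-1/2}$ together with $\p_2 X_{k,u}u = X_{k,u}v + \delta_{k1}v^2$, reorganize — after using the algebraic identity $\sum_j a_{ij}(p)p_j = p_i(1+|p|^2)^{-1}$ — into the lower-order terms $-b_{11}v^3$ and $-b_{1j}vX_jv$ displayed in \eqref{2nderiv}; the commutator terms (c) feed into these same expressions. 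The bookkeeping here is where all the work lies.

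The main obstacle I expect is precisely the algebraic reorganization in step (b)–(c): after differentiation one is faced with a sum of many terms involving $\p_2 a_{ij}(\nabla_\e u)$, $\p_2(1+|\nabla_\e u|^2)^{-1/2}$, and commutators, and one must show they collapse to exactly the three terms $-b_{11}v^3 - b_{1j}vX_jv - X_{i,u}(b_{i1}v^2)$. The key simplifying facts are: (i) $[\p_2,X_{1,u}]=v\,\p_2$ and $[\p_2,X_{2,u}]=0$, so every commutator is a scalar multiple of $\p_2=\e^{-1}X_{2,u}$, keeping everything inside the span of $X_{1,u},X_{2,u}$; (ii) the contraction identity $a_{ij}(p)p_j = p_i/(1+|p|^2)$, which lets products of $a_{ij}$ with components of $\nabla_\e u$ telescope; and (iii) $\p_2$ of a component of $\nabla_\e u$ is, up to the $\delta_{i1}v^2$ correction, just $X_{i,u}v$, so that $\p_2 b_{ij}$ is expressible through $v$, $X_kv$ and the fixed smooth functions $a_{ij}$. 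Once these three observations are in hand, the identity \eqref{2nderiv} follows by grouping terms, exactly paralleling the derivation of \eqref{PDEXu} in Lemma \ref{equXz} but with the extra $v$-linear and $v$-quadratic pieces generated by the non-vanishing commutator $[\p_2,X_{1,u}]$. I would present the commutator identities and the contraction identity first as a short preliminary computation, then carry out the differentiation and collect terms, relegating the purely mechanical expansion to a one-line "a direct computation yields."
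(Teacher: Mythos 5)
Your overall strategy --- differentiate $L_\e u=0$ along $\p_2$, use $[\p_2,X_{1,u}]=v\,\p_2$, $[\p_2,X_{2,u}]=0$ and the chain rule, then collect terms --- is exactly the paper's route: the paper applies the computation of Lemma \ref{equXz} with $X_k$ replaced by $\p_2$ and then expands the one remaining term. But there is a concrete slip you should fix before the computation can be carried out. The divergence-form equation you propose to differentiate, $X_{i,u}\big(a_{ij}(\nabla_\e u)(1+|\nabla_\e u|^2)^{-1/2}X_{j,u}u\big)=0$, is not $L_\e u=0$: by the very contraction identity you quote, $a_{ij}(p)p_j=p_i/(1+|p|^2)$, that expression equals $X_{i,u}\big(X_{i,u}u\,(1+|\nabla_\e u|^2)^{-3/2}\big)$, a different operator. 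The actual equation \eqref{eqebis} has flux $X_{i,u}u/\sqrt{1+|\nabla_\e u|^2}$ with no $a_{ij}$; the matrix $a_{ij}$ only appears after differentiation, through $\partial_{p_j}\big(p_i(1+|p|^2)^{-1/2}\big)=a_{ij}(p)(1+|p|^2)^{-1/2}$. If you literally differentiate the equation as you wrote it, your terms (a), (b), (c) will not collapse to the three right-hand terms of \eqref{2nderiv}.

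Relatedly, your bookkeeping of where the lower-order terms come from is off. In the correct computation the terms $-\frac{a_{11}}{\sqrt{1+|\nabla_\e u|^2}}v^3-\frac{a_{1j}}{\sqrt{1+|\nabla_\e u|^2}}vX_jv$ come entirely from the single outer commutator contribution $[\p_2,X_{1,u}]\big(X_{1,u}u/\sqrt{1+|\nabla_\e u|^2}\big)=v\,\p_2\big(X_{1,u}u/\sqrt{1+|\nabla_\e u|^2}\big)$, expanded by the chain rule together with $\p_2X_{j,u}u=X_{j,u}v+\delta_{j1}v^2$; this is precisely the expression at which the paper's proof stops. They do not arise from differentiating coefficients $b_{ij}$ (there are none in $L_\e u=0$), and the contraction identity is not needed anywhere. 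Your identification of $-X_{i,u}\big(\frac{a_{i1}}{\sqrt{1+|\nabla_\e u|^2}}v^2\big)$ as the commutator correction $[\p_2,X_{1,u}]u=v^2$ inside the flux is correct. With the starting equation corrected, your plan reproduces the paper's proof.
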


\begin{proof}  Differentiating the PDE we obtain
\begin{eqnarray}X_{i}  \Big( \frac{a_{ij}(\nabla_\e u)}{\sqrt{1 +|\nabla_\e u|^2}}
X_{j}v \Big) \!\!\!\!&&=
-[\p_2, X_{i}] \Big(\frac{X_{i}u}{\sqrt{1 +
|\nabla_{\e} u |^2}}\Big) - X_{i} \Big(\frac{a_{ij}(\nabla_\e
u)}{\sqrt{1 +|\nabla_{\e} u|^2}} [\p_2, X_{j}] u\Big) \notag \\
 \!\!\!\! &&=
-[\p_2, X_{1}] \Big(\frac{X_{1}u}{\sqrt{1 +
|\nabla_{\e} u |^2}}\Big) - X_{i} \Big(\frac{a_{i1}(\nabla_\e
u)}{\sqrt{1 +|\nabla_{\e} u|^2}} [\p_2, X_{1}] u\Big)\notag  \\ 
\!\!\!\! &&=-v\p_2 \Big(\frac{X_{1}u}{\sqrt{1 +
|\nabla_{\e} u |^2}}\Big) - X_{i} \Big(\frac{a_{i1}(\nabla_\e
u)}{\sqrt{1 +|\nabla_{\e} u|^2}}  v^2\Big). \notag
\end{eqnarray}
\end{proof}

\bigskip
Let us consider the linear equation satisfied by the components of
the gradient of $u$:

$$M_{\e} z = X_{i}\Big(\frac{a_{ij}(\nabla_\e u)}{\sqrt{1 + |\nabla_\e
u|^2}}X_{j}z \Big).$$
If $z$ is a smooth solution of equation
\begin{equation}\label{eq111}M_\e z = f,
\end{equation}
 then its intrinsic derivatives $X_{i}z$   are still
solutions of the the same equation with a different right-hand
side.

\begin{lemma}\label{lem4}
If $z$ is a smooth solution of (\ref{eq111}) then $s_{1} = X_{1}z
$ is a solution of the equation
\begin{equation}\label{eq30}
 M_\e s_1 = X_{1}f +
X_{i}\Bigg(\frac{a_{i2}(\nabla_\e u)}{\sqrt{1 + |\nabla_\e u|^2}}
\p_2 u X_{2}z \Bigg) -\end{equation}$$-
X_{i}\Bigg(X_{1}\Big(\frac{a_{ij}(\nabla_\e u)}{\sqrt{1 +
|\nabla_\e u|^2}}\Big) X_{j}z \Bigg)
 + \p_2 u  X_{2}\Bigg(\frac{a_{2j}(\nabla_\e u)}{\sqrt{1 +
|\nabla_\e u|^2}}X_{j}z \Bigg).$$
\end{lemma}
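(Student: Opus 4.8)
The plan is to derive \eqref{eq30} in the same spirit as Lemma \ref{equXz}: apply the vector field $X_1 = X_{1,u}$ to both sides of $M_\e z = f$ and commute $X_1$ through the second order operator $M_\e$, collecting the correction terms that arise. The only structural fact needed about the moving frame is its commutation relation. Since $X_{1,u} = \p_1 + u\,\p_2$ and $X_{2,u} = \e\p_2$, a direct computation gives $[X_1,X_1] = 0$ and
\[
[X_1,X_2] = -(\p_2 u)\,X_2 .
\]
Everything else is bookkeeping organized around this single identity.

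Write $b_{ij} := a_{ij}(\nabla_\e u)/\sqrt{1+|\nabla_\e u|^2}$, so that $M_\e w = X_i(b_{ij}X_j w)$. Applying $X_1$ to $M_\e z = f$ gives $X_1 f = \sum_i X_1 X_i(b_{ij} X_j z)$. First I would move $X_1$ past the outer $X_i$ via $X_1 X_i = X_i X_1 + [X_1,X_i]$: the commutator is nonzero only for $i=2$ and contributes $[X_1,X_2](b_{2j}X_j z) = -(\p_2 u)\,X_2(b_{2j}X_j z)$. Next, inside the surviving term $X_i X_1(b_{ij}X_j z)$ I would apply the Leibniz rule, $X_1(b_{ij}X_j z) = X_1(b_{ij})\,X_j z + b_{ij}\,X_1 X_j z$; the first piece is $X_i(X_1(b_{ij})\,X_j z)$, which is (up to sign) the third term of \eqref{eq30}. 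Finally, commuting once more in the second piece, $X_1 X_j z = X_j X_1 z + [X_1,X_j]z = X_j s_1 - \delta_{j2}(\p_2 u)X_2 z$, so $X_i(b_{ij}X_1 X_j z) = X_i(b_{ij}X_j s_1) - X_i(b_{i2}(\p_2 u)X_2 z) = M_\e s_1 - X_i(b_{i2}(\p_2 u)X_2 z)$. Collecting the four contributions to $X_1 f$ and solving for $M_\e s_1$ yields exactly \eqref{eq30}.

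I do not expect a genuine obstacle here; the computation is short and purely algebraic. The only point requiring care is that the commutator $[X_1,X_2] = -(\p_2 u)X_2$ is invoked in two different places — once when $X_1$ is pushed past the outermost derivative (producing the divergence-form term $\p_2 u\, X_2(b_{2j}X_j z)$) and once when it is pushed past the innermost derivative (producing $X_i(b_{i2}\,\p_2 u\, X_2 z)$) — and the signs picked up when transferring these terms from the $X_1 f$ side to the $M_\e s_1$ side must be tracked consistently. This is precisely the linear, inhomogeneous ($k=1$) analogue of the differentiation identity behind Lemma \ref{equXz}.
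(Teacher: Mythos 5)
Your computation is correct: the commutator $[X_{1},X_{2}]=-(\p_2 u)X_{2}$ is the right one, and pushing $X_1$ through the outer derivative, applying Leibniz to the coefficients $a_{ij}(\nabla_\e u)/\sqrt{1+|\nabla_\e u|^2}$, and commuting once more inside reproduces \eqref{eq30} with all four terms and signs as stated. This is exactly the differentiation-plus-commutation argument the paper uses (and leaves implicit) for Lemmas \ref{equXz}, \ref{equ2n} and \ref{lem6}, so there is nothing to add.
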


\bigskip

An analogous computation ensures that

\begin{lemma}\label{lem6}
If $z$ is a solution of (\ref{eq111}) then $s_{2} = X_2z $ is a
solution of the equation
\begin{equation}\label{eq340} M_\e s_2= X_2f -
X_{i}\Bigg(\frac{a_{i1}(\nabla_\e u)}{\sqrt{1 + |\nabla_\e u|^2}}
\p_2 u X_2z \Bigg) -\end{equation}$$-
X_{i}\Bigg(X_2\Big(\frac{a_{ij}(\nabla_\e u)}{\sqrt{1 + |\nabla_\e
u|^2}}\Big) X_jz \Bigg)
 - \p_2 u  X_2\Bigg(\frac{a_{1j}(\nabla_\e u)}{\sqrt{1 +
|\nabla_\e u|^2}}X_jz \Bigg).$$
\end{lemma}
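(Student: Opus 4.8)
The plan is to derive \eqref{eq340} by differentiating the equation $M_\e z = f$ along $X_2$ and commuting $X_2$ past the divergence-form operator $M_\e$, exactly as in the (omitted) proof of Lemma \ref{lem4} but with $X_1$ replaced by $X_2$. Write, for brevity, $A_{ij} = a_{ij}(\nabla_\e u)/\sqrt{1+|\nabla_\e u|^2}$, so that $M_\e w = X_i(A_{ij}X_j w)$. The only structural facts needed are the commutation relations for the frame $X_1 = X_{1,u} = \p_1 + u\p_2$, $X_2 = X_{2,u} = \e\p_2$: a one-line computation gives $[X_2,X_1] = (\p_2 u)\,X_2$, while $[X_1,X_1] = [X_2,X_2] = 0$. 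In particular $[X_2,X_j]$ is nonzero only for $j=1$, and then it is a multiple of $X_2$, i.e.\ of $\p_2$, the direction in which $M_\e$ degenerates as $\e\to0$.

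Applying $X_2$ to both sides of $M_\e z = f$ and using $X_2 X_i = X_i X_2 + [X_2,X_i]$,
\begin{equation*}
X_2 f \;=\; X_i\big(X_2(A_{ij}X_j z)\big) \;+\; [X_2,X_i](A_{ij}X_j z).
\end{equation*}
Expanding the first term by the Leibniz rule and again commuting $X_2$ past $X_j$,
\begin{equation*}
X_2(A_{ij}X_j z) \;=\; A_{ij}\,X_j(X_2 z) \;+\; (X_2 A_{ij})\,X_j z \;+\; A_{ij}\,[X_2,X_j]z ,
\end{equation*}
so that $X_i\big(A_{ij}X_j(X_2 z)\big) = M_\e s_2$ is the leading term and the remaining three contributions are of lower order. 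Using $[X_2,X_j]z = \delta_{j1}(\p_2 u)X_2 z$ gives $A_{ij}[X_2,X_j]z = A_{i1}(\p_2 u)X_2 z$, and using $[X_2,X_i] = \delta_{i1}(\p_2 u)X_2$ gives $[X_2,X_i](A_{ij}X_j z) = (\p_2 u)X_2(A_{1j}X_j z)$. Substituting, isolating $M_\e s_2$ on the left, and recalling the definition of $A_{ij}$, one obtains precisely \eqref{eq340}.

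There is no genuine obstacle here beyond careful bookkeeping of which index contributes to each commutator; the sign discrepancy between the last two error terms in \eqref{eq30} and \eqref{eq340} is simply $[X_2,X_1] = -[X_1,X_2]$. The point worth emphasizing for later use is that every error term on the right-hand side of \eqref{eq340} is proportional either to $\p_2 u$ or to a first intrinsic derivative $X_2 A_{ij}$ of the coefficients, which is exactly the structure exploited in the subsequent a priori estimates.
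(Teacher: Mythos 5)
Your computation is correct and is exactly the differentiate-and-commute argument the paper leaves implicit (it states Lemma \ref{lem6} follows from ``an analogous computation'' to Lemma \ref{lem4}): you use the key facts $[X_2,X_1]=(\p_2u)X_2$ and $[X_2,X_2]=0$ correctly in both places they enter, and the bookkeeping of the indices $a_{i1}$ and $a_{1j}$ matches \eqref{eq340}. The only quibble is the closing aside: comparing \eqref{eq30} with \eqref{eq340}, it is the first and third error terms (the commutator-generated ones) that flip sign via $[X_2,X_1]=-[X_1,X_2]$, while the coefficient-derivative term $X_i\big(X_k(a_{ij}/\sqrt{1+|\nabla_\e u|^2})X_jz\big)$ carries a minus sign in both, but this remark does not affect the derivation itself.
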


\bigskip

\section{The horizontal mean curvature as a  non-divergence form operator: $W^{2,p}_\loc$ a priori  estimates.}

The operator $L_\e$ defined in (\ref{eqebis}) can be represented in non-divergence form
\begin{equation}\label{nondiv}N_\e u= \sum_{i,j=1}^{2} a_{ij}(\nabla_\e u)
X_{i}X_{j}u,\end{equation} where 
$ a_{ij}$ are defined in (\ref{aij}). 

Following the approach in the papers \cite{CLM,CM}  we {\it linearize} the operator
$N_\e$ in the following way: While the coefficients of the vector fields 
$X_{i}$ depend on a fixed function $u$,  they will be  applied to 
an arbitrary function $z$, sufficiently regular. The associated linear non divergence form  operator is
\begin{equation}\label{nondivlin}N_{\e, u}z=
\sum_{i,j=1}^{2} a_{i j}(\nabla_\e u) X_{i, u}X_{j,
u}z,\end{equation} where the coefficients $a_{i j}$ are defined in (\ref{aij}).

The main result of this section is the following

\begin{theorem}\label{p402}
Let us assume that  $z$ is a classical solution of $N_{\e, u}z = 0.$ 

\medskip

\noindent {\it (i)}  Let us assume that 
$\alpha\in]0,1[,$ $p>10/3$ and for every
$K\subset\subset \Omega$ there exists a constant $C$ such that
$$||u||_{C^{1,\alpha}(K)} + ||\p_{2} z||_{L^p(K)} + ||\p_{2} X_{u}z||_{L^2(K)} + ||\nabla_{\e}^2 z||_{L^2(K)} \leq C.$$
  Then for any compact set $K_1 \subset \subset K$, there exists a
  constant $C_1$ only dependent on $K$, C, and on the constant in \ref{MM} such that
   $$||z||_{W^{2,10/3}_\e(K_1)}\leq C_1.$$

\medskip

\noindent {\it (ii)}  If, in addition to the previous conditions, there exists a constant $\tilde C$ such that 
   $$ ||\p_{2} X_{u}z||_{L^4(K)} \leq \tilde C,$$
 with $\alpha \geq 1/4$, then for every $p>1$ there exists a constant $C_1$ only dependent on $C$ and $\tilde C$ and $p$ such that
     $$||z||_{W^{2,p}_\e(K_1)}\leq C_1.$$
\end{theorem}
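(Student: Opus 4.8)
The plan is to treat Theorem \ref{p402} as a bootstrap argument built on the lifting trick and the uniform fundamental-solution estimates of \cite{CiMa-F}, exactly as sketched in the introduction. The linear equation $N_{\e,u}z=0$ is degenerate elliptic with coefficients $a_{ij}(\nabla_\e u)$ that are only as regular as the $C^{1,\alpha}$ bound on $u$ allows; the degeneracy as $\e\to 0$ is in the $\p_2$ direction. The first step is to lift: introduce the auxiliary variable $s\in(-1,1)$ and the vector fields $\tX_1=\p_{x_1}+(u+s^2)\p_{x_2}$, $\tX_2=\e\p_{x_2}$, $\tX_3=\p_s$, so that $\{\tX_1,\tX_3\}$ is step-three bracket generating with the commutators reproducing the bad direction $\p_{x_2}$. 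Freeze the coefficients $a_{ij}(\nabla_\e u)$ at a point (replacing $u$ by an analogue of its first-order Taylor polynomial in the lifted variables), obtaining a constant-coefficient (in the frozen sense) Hörmander sub-Laplacian $\mathcal{L}_0$ of the type \eqref{nondivfroz}, \eqref{nondivfroz0}. For $\mathcal{L}_0$ one has, by \cite{CiMa-F} (see Theorem \ref{fundam}), a fundamental solution whose singular-integral estimates are uniform in $\e$; this yields a priori $W^{2,p}_\e$-type estimates uniform in $\e$ for the frozen operator. One then writes $N_{\e,u}z = \mathcal{L}_0 z + (\text{error})$, where the error is a sum of terms of the form $(a_{ij}(\nabla_\e u)-a_{ij}(\nabla_\e u)(x_0))X_iX_j z$; the Hölder modulus of continuity of the coefficients (from $u\in C^{1,\alpha}$) makes this error small on small balls, and a covering/absorption argument produces the local $W^{2,q}_\e$ bound.

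For part (i), the budget of available a priori bounds is: $u\in C^{1,\alpha}$, $\p_2 z\in L^p$ with $p>10/3$, $\p_2 X_u z\in L^2$, and $\nabla_\e^2 z\in L^2$. I would run the freezing/lifting scheme to gain regularity: the $L^2$ control on $\nabla_\e^2 z$ plus the lifted sub-elliptic estimate upgrades $\nabla_\e z$ into a higher Lebesgue class via the Sobolev embedding attached to the step-three lifted structure (the homogeneous dimension of the lifted space governs the exponent gain, and $10/3$ is exactly what that embedding delivers starting from $L^2$). The $\p_2 z\in L^p$ hypothesis is what controls the $s$-derivative terms introduced by the lift (since $\tX_1$ carries the $s^2$ coefficient, applying it twice produces $\p_2 u$ and $\p_2 z$ factors); tracking these remainder terms and bounding them by the given norms — using the interpolation inequalities in Propositions \ref{3.3} and \ref{interpinfinity} to trade powers of $X_i z$ against $\nabla_\e(|X_i z|^{(p-1)/2})$ and lower-order quantities — is the routine but delicate bookkeeping. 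The conclusion of (i) is $z\in W^{2,10/3}_\e(K_1)$.

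For part (ii), the extra hypotheses are $\p_2 X_u z\in L^4$ and $\alpha\ge 1/4$. The point is that once (i) gives $\nabla_\e^2 z\in L^{10/3}$ locally, one can re-enter the machine: the coefficients $a_{ij}(\nabla_\e u)$ are now controlled in a better space, the error terms in the freezing step are dominated using the $L^4$ control on the mixed derivative $\p_2 X_u z$, and the lifted sub-elliptic estimate (again uniform in $\e$ by \cite{CiMa-F}) can be run at every exponent $q<\infty$. Concretely I would set up an induction on $q$: assuming $\nabla_\e^2 z\in L^q_{loc}$, show $\nabla_\e^2 z\in L^{q'}_{loc}$ with $q'$ strictly larger (again governed by the homogeneous-dimension Sobolev gain), with the remainder terms handled by Hölder's inequality against the $L^4$ and $C^{1,\alpha}$, $\alpha\ge 1/4$, bounds, plus Propositions \ref{3.3}–\ref{interpinfinity}. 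After finitely many steps $q'$ exceeds the homogeneous dimension and the iteration closes for every $p$, giving $z\in W^{2,p}_\e(K_1)$ for all $p>1$.

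The main obstacle, as the authors themselves flag, is making the freezing argument work with coefficients that are only uniformly bounded in the Lipschitz norm as $\e\to 0$: one must show that replacing $u$ by a first-order Taylor-type polynomial in the lifted variables produces remainder terms that are genuinely small (not merely finite) on small balls, with smallness quantified by the $C^{1,\alpha}$ modulus and \emph{uniform in $\e$}, and that these remainders can be absorbed into the left-hand side despite the degeneracy. A secondary difficulty is verifying that the fundamental-solution estimates of \cite{CiMa-F} apply to the particular lifted frozen operators \eqref{nondivfroz}–\eqref{nondivfroz0} with constants independent of $\e$ and of the freezing point, and that the resulting singular-integral bounds survive the non-divergence-to-divergence switches used throughout; the constraints $p>10/3$ in (i) and $\alpha\ge 1/4$ in (ii) are precisely the thresholds at which the remainder estimates in the lifted step-three geometry become summable.
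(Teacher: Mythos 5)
Your high-level skeleton coincides with the paper's (lift to $\Om\times(-1,1)$ via $\tX_1,\tX_2,\tX_3$, freeze $u$ by a Taylor-type polynomial, use the $\e$-uniform fundamental solution estimates of Theorem \ref{fundam}), but the analytic engine you propose cannot close the argument. The discrepancy between $N_{\e,u}$ and the frozen operator is not only the coefficient oscillation $(\bar a_{ij}(x_0)-\bar a_{ij}(\zeta))X_{i,u}X_{j,u}z$ that you plan to make small on small balls and absorb: as the computation \eqref{LmenoL} shows, it also contains the vector-field difference $\mathcal R=(u-P^1_{x_0}u-\sigma^2)\p_{x_2}$ applied to $z$ and to $X_{j,u}z$. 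These produce the terms $\p_2 z$ and $\p_2 X_u z$, derivatives in the direction which has order three in the lifted structure; by Remark \ref{notabene} the operator $\mathcal R$ has order $2-\alpha$, so these terms are of higher order than anything the $W^{2,q}_\e$ norm controls uniformly in $\e$, and no covering/absorption scheme can reabsorb them into the left-hand side, however small the factor is on small balls. This is precisely why the theorem carries the hypotheses $\p_2 z\in L^p$, $\p_2 X_u z\in L^2$ (and $L^4$ in (ii)): those quantities must be treated as data, not absorbed.

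The paper's actual mechanism, absent from your proposal, is a representation formula for $z\phi$ in terms of $\Gamma_{\e,x_0}$ (Proposition \ref{p402r}), differentiated twice along the frozen fields (Lemma \ref{repder}), with kernels estimated by combining \eqref{2} with the quantitative freezing estimate $|u-P^1_{x_0}u|\le C d^{1+\al}_{x_0,\e}$ of Lemma \ref{d1alpha}, and then the fractional integration theorem (Proposition \ref{singularintegral}, Corollary \ref{corollarioscemo}) applied to the data above: the worst kernel (three derivatives on $\Gamma_{\e,x_0}$ times $u-P^1_{x_0}u-\sigma^2$) has fractional order $\al$ and maps $\p_2 z\in L^p$, $p>10/3$, into $L^r$ with $r=5p/(5-p\al)>10/3$; in (ii) the point is that $L^4$ data against a kernel of order $1+\al$ with $4(1+\al)\ge 5=Q$ gives every exponent. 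You flag the smallness of the freezing remainder as "the main obstacle" but never supply the estimate of Lemma \ref{d1alpha} or any substitute, and the tools you do invoke, Propositions \ref{3.3} and \ref{interpinfinity}, play no role in this theorem (they belong to the Caccioppoli iteration of the following section). Finally, your Sobolev-embedding explanation of the exponent only upgrades $\nabla_\e z$ (from $W^{1,2}_\e$ into $L^{10/3}$), whereas the conclusion is about $\nabla^2_\e z$; the $10/3$ arises from the kernel mapping properties in homogeneous dimension $5$ applied to the hypotheses, together with the final comparison between $X_{k,u}X_{l,u}z$ and the frozen second derivatives, which again uses $\p_2 z\in L^p$.
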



\subsection{Lifting and freezing} The operator $N_{\e}$ is an elliptic (Riemannian) approximation of the sub-Riemannian mean  curvature 
operator in the right-hand side of \eqref{eq0bis}. Its linearization $N_{\e, u}$ can be interpreted an uniformly 
elliptic operator, with least eigenvalue depending on $\e$. 
It is well known that this approximating operator 
has a fundamental solution, but its estimates strongly depend on $\e$. 
In order to obtain estimates uniform in $\e$ 
we further approximate it with an H\"ormander type operator, a sum of squares of 
vector fields,
which has a similar behavior in the direction $X_1$, but for which the 
direction $\p_2$ is the direction of one of the commutators  ( a step-three commutator!). 
The idea is to use a  new version of the famous Rothschild and Stein  lifting theorem, 
only partially inspired to the  procedure in \cite{Roth:Stein}.  
In order to deal with the non-smoothness of $u$, we will also operate a freezing:
roughly speaking we approximate the coefficients of the vector field $X_1$ with their first order 
Taylor polynomials.

\medskip

\paragraph{\bf Lifting.} The first step is to lift the vector fields to a higher dimensional space through the
introduction of a new variable  $s$. The points of the extension space will be
denoted $(x,s)\in \Omega\times(-1,1)\subset R^3$, with $x=(x_1,x_2)$.
The lifted vector fields  are defined as follows
\begin{equation}\label{liftednotfrozen}
\tX_1=\p_{x_1}+(u(x)+s^2)\p_{x_2}, \ \tX_2=\e\p_{x_2},  \text{ and }\tX_3=\p_s.
\end{equation}
The $C^{1,\alpha}$ distribution $\{\tX_1,\tX_3\}$ is a step 3, bracket generating distribution since
\begin{equation}\label{commutatorsnotfrozen}
[\tX_1,\tX_3]=-2s\p_{x_2} \text{ and } [\tX_3,[ \tX_1,\tX_3]]=-2\p_{x_2}.
\end{equation}
The associated homogeneous dimension is $Q=5$.
 If  $x_0=((x_0)_1,(x_0)_2)\in \Om$ is a fixed point, then for all $x\in \Om$, $s\in \R$
 one can define exponential coordinates  $(\te_1(x,s), \te_2(x,s),\te_3(x,s))$, based at $(x_0,0)$, via the formula
 \begin{equation}\label{exponentialnotfrozen}
 (x,s)=\exp_{(x_0,0)}( \te_1(x,s)\tX_1+\te_2(x,s)\tX_2+\te_3(x,s)\tX_3).
 \end{equation}
 Here for any Lipschitz vector field  $Z$ in $\R^3$ we denote by $\exp_{(x_0,0)}(Z)$ the point
 $\gamma(1)$ where $\gamma$ is a curve such that $\gamma(0)=(x_0,0)$
 and $\gamma'(s)=Z(\gamma(s))$. The exponential coordinates can  be explicitly computed yielding 
 $$\te_1(x,s)=(x-x_0)_1, \ \e \te_2(x,s)=(x-x_0)_2- (x-x_0)_1 \big(\int_0^1u(\gamma(\tau)) d\tau- \frac{s^2}{3}\big), \text{ and }\ \te_3(x,s)=s.$$
 Note that if $d_E(x,x_0)=\sqrt{(x-x_0)_1^2+(x-x_0)_2^2}$ is the Euclidean distance in $\Omega$ then for $x$ sufficiently close
 to $x_0$ and for a certain constant $C>0$  (both depending on the $C^{1,\al}$ norm of $u$) one has that
 $$C^{-1} d_E(x,x_0) \le   \sqrt{\te_1(x,0)^2+ \e^2 \te_2(x,0)^2} \le C d_E(x,x_0).$$
Next, we define an analogue of the first order Taylor polynomial of $u$ as
\begin{equation}\label{P1}
P_{x_0}^1u \ (x)=u(x_0)+e_1(x) \tX_1 u(x_0,0) + e_2(x)\tX_2u(x_0,0),
\end{equation}
where $e_1(x)=\te_1(x,0)$ and 
\begin{equation}\label{e_2}
\e e_2(x)= (x-x_0)_2-  (x-x_0)_1  u(x_0).
\end{equation}
 We remark explicitly that
 \begin{multline}\label{e2notfrozen}
 \big|\e( \te_2(x,0)-e_2(x))\tX_2u(x_0,0)\big| = \big| \big( (x-x_0)_1 \big(\int_0^1 [u(\gamma(\tau))-   u(x_0) ]\  d\tau\big)\p_2 u(x_0)\big|
 \end{multline}

\paragraph{\bf Freezing. } At this point we introduce an appropriate  {\it freezing} of the vector fields by defining
\begin{equation}\label{frozenvectors}
X_{1,x_0}=\p_{x_1}+(P_{x_0}^1u (x) + s^2) \p_{x_2}, \ X_{2,x_0}=\e\p_{x_2} \text{ and }
X_{3,x_0}=\p_s.
\end{equation}
Observe that $\{X_{1,x_0},X_{3,x_0}\}$ is a distribution of smooth vector fields satisfying H\"ormander's finite rank hypothesis with step three. We denote by $d_{x_0}(\cdot , \cdot) $ the corresponding Carnot-Caratheodory
distance and remark that the homogeneous dimension of the space is 5. We also need the Riemannian distance function $d_{x_0,\e}(\cdot,\cdot)$ defined as 
the control distance associated to $\{X_{1,x_0}, X_{2,x_0}, X_{3,x_0}\}$.
 Define exponential coordinates $(\oe_1,\oe_2,\oe_3)$ in a neighborhood of $x_0$ through
the formula
$$(x,s)=\exp_{(x_0,0)}(\oe_1(x,s)X_{1,x_0} + \oe_2(x,s)X_{2,x_0}+\oe_3(x,s)X_{3,x_0}).$$
Note that $$\oe_1(x,0)= (x-x_0)_1, \ \oe_2(x,s)=\frac{1}{\e}\bigg( (x-x_0)_2 -(x-x_0)_1 \big( \int_0^1 P_{x_0}^1 u(\gamma(\tau)) d\tau -\frac{s^2}{3}\big)\bigg) \text{ and }\oe_3(x,0)=0.$$

It is well known (see for instance the discussion in \cite[Section 2.4]{cdpt:survey} that
$(\R^3,d_{x_0,\e})$ converges in the Gromov-Hausdorff sense to $(\R^3,d_{x_0})$. In particular one has that for each fixed $x$ and $s$,  then $d_{x_0,\e}((x,s),(x_0,0))\to d_{x_0}((x,s),(x_0,0))$ as $\e\to 0$. Moreover the  volume of the balls  $B_\e((x_0,0),R)$ in the $d_{x_0,\e}$ metric converges to the volume of the limit Carnot-Caratheodory balls, i.e.  $|B_\e((x_0,0),R)|\to |B_0((x_0,0),R)|$ as $\e\to 0$. In particular,
for $\e_0>\e>0$ sufficiently small there exists a constant $C>0$ depending only
on $\e_0$  such that 
\begin{equation}\label{stabilityofvolume} |B_\e((x_0,0),R)|\ge C R^5. \end{equation}
All this can also be seen explicitly in our special setting by 
observing that there exists a constant $C>0$ such that for $x$ near $x_0$, one has
(see \cite{NSW})
\begin{eqnarray}
C^{-1} d_{x_0,\e}((x,s),(x_0,0)) && \le \sqrt{ \oe_1^2(x,s)+\min(\oe_2^2(x,s),\ (\e \oe_2(x,s))^{2/3} \ )+\oe_3^2(x,s)}\le C d_{x_0,\e}((x,s),(x_0,0)). \notag \\
C^{-1} d_{x_0}((x,s),(x_0,0)) && \le \bigg( \oe_1^6(x,s)+(\e\oe_2)^2(x,s)+\oe_3^6(x,s)\bigg)^{1/6}
\le C d_{x_0}((x,s),(x_0,0)). \notag
\end{eqnarray}
Recall also that for $x$ and $x_0$ sufficiently close, $\e_0>\e>0$ sufficiently small
there exist positive constants $C_1,C_2$ depending only on $\e_0$ and 
on the $C^{1,\alpha}$ norm of $u$ such that
\begin{equation}\label{de-dcc}d_E((x,s),(x_0,0)) \le C_1 d_{x_0,\e}((x,s),(x_0,0))\le C_2 d_{x_0}((x,s),(x_0,0)).\end{equation}
Since $|\oe_2(x,s)|\to \infty$ as $\e\to 0$ one has that for a fixed $(x,s)$ then
$\lim_{\e\to 0} d_{x_0,\e}((x,s),(x_0,0))\approx d_{x_0}((x,s),(x_0,0)).$

In the following we will denote by $d_{x_0,\e}(x,x_0)$  the quantity $d_{x_0,\e}((x,0),(x_0,0))$.
\begin{lemma}\label{d1alpha}
If $u\in C^{1,\alpha}_E(\Om)$ there exists  constant $\e_0,C>0$ 
and a neighborhood $U$ of $x_0$ depending only on the $C^{1,\alpha}_E$ norm of $u$  such that
  for all $x$ in a sufficiently small neighborhood of $x_0$, and for all $\e_0>\e>0$,
  \begin{equation}\label{1alpha1}
  |u(x)-P^1_{x_0} u(x)| \le C d^{1+\al}_{x_0,\e}(x,x_0).
  \end{equation}
\end{lemma}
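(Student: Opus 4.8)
The plan is to compare $u$ directly with the affine-in-exponential-coordinates polynomial $P^1_{x_0}u$ along the Carnot–Carathéodory geodesic connecting $x_0$ to $x$, and to control the error by the $C^{1,\alpha}_E$ modulus of continuity of the horizontal derivatives of $u$, converting Euclidean estimates into $d_{x_0,\e}$-estimates via the comparison \eqref{de-dcc}. First I would recall from the explicit formulas above that $e_1(x)=\te_1(x,0)=(x-x_0)_1$ and $\e e_2(x)=(x-x_0)_2-(x-x_0)_1 u(x_0)$, so that
$$u(x)-P^1_{x_0}u(x)=\big(u(x)-u(x_0)\big)-(x-x_0)_1\,\tX_1u(x_0,0)-e_2(x)\,\tX_2u(x_0,0),$$
and since $\tX_1u(x_0,0)=\p_1u(x_0)+u(x_0)\p_2u(x_0)$ and $\tX_2u(x_0,0)=\e\p_2u(x_0)$, the right-hand side is exactly $u(x)-u(x_0)-(x-x_0)_1\p_1 u(x_0)-(x-x_0)_2\p_2u(x_0)$, i.e. the ordinary first-order Euclidean Taylor remainder of $u$ at $x_0$. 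Thus $|u(x)-P^1_{x_0}u(x)|\le C\,d_E(x,x_0)^{1+\alpha}$ simply because $u\in C^{1,\alpha}_E$.

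Next I would upgrade this Euclidean bound to the required $d_{x_0,\e}$-bound. Here one must be careful about the direction of the inequality: \eqref{de-dcc} gives $d_E\le C_1 d_{x_0,\e}$, which is the wrong direction for a naive substitution, since $d_E(x,x_0)^{1+\alpha}\le C d_{x_0,\e}(x,x_0)^{1+\alpha}$ would follow only from $d_E\ge c\,d_{x_0,\e}$. However, the $d_{x_0,\e}$-distance of $(x,0)$ from $(x_0,0)$ is comparable (by the explicit box-ball estimate quoted just before Lemma \ref{d1alpha}, specialized to $s=0$, where $\oe_3=0$ and $\oe_1=(x-x_0)_1$) to $\sqrt{\oe_1^2+\min(\oe_2^2,(\e\oe_2)^{2/3})}$. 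Since at $s=0$ one has $\e\oe_2(x,0)=(x-x_0)_2-(x-x_0)_1\int_0^1 P^1_{x_0}u(\gamma(\tau))\,d\tau$ and $P^1_{x_0}u$ is Lipschitz with constant controlled by the $C^{1,\alpha}_E$-norm of $u$, a direct estimate shows $|\e\oe_2(x,0)|\le C\,d_E(x,x_0)$, hence $\min(\oe_2^2,(\e\oe_2)^{2/3})\le (\e\oe_2)^{2/3}\le C\,d_E(x,x_0)^{2/3}$ and altogether $d_{x_0,\e}(x,x_0)\le C\,d_E(x,x_0)^{1/3}$ for $x$ near $x_0$; equivalently $d_E(x,x_0)\ge c\,d_{x_0,\e}(x,x_0)^{3}$. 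This is the wrong way too. The resolution is the opposite comparison: we in fact need $d_E(x,x_0)^{1+\alpha}\le C d_{x_0,\e}(x,x_0)^{1+\alpha}$, and since $d_E(x,x_0)\le C_1 d_{x_0,\e}(x,x_0)$ directly from \eqref{de-dcc}, raising both sides to the power $1+\alpha$ (both quantities small, $1+\alpha>0$) gives precisely $d_E(x,x_0)^{1+\alpha}\le C\,d_{x_0,\e}(x,x_0)^{1+\alpha}$. Combining with the Taylor estimate of the first paragraph yields \eqref{1alpha1}.

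The uniformity in $\e$ for $0<\e<\e_0$ is automatic because the constants $C_1,C_2$ in \eqref{de-dcc} and the constant in the $C^{1,\alpha}_E$ Taylor estimate depend only on the $C^{1,\alpha}_E$-norm of $u$ and on $\e_0$, not on $\e$ itself; the neighborhood $U$ of $x_0$ on which the local comparison of distances holds is likewise chosen depending only on that norm. The one point that genuinely requires care — and is the main (if modest) obstacle — is verifying that $\te_2(x,0)$, $e_2(x)$, and $\oe_2(x,0)$ all match up correctly so that $P^1_{x_0}u$ really is the Euclidean affine Taylor polynomial: one must check that the $s^2/3$ correction terms and the averaging $\int_0^1 u(\gamma(\tau))\,d\tau$ appearing in the exponential-coordinate formulas are either absent (at $s=0$) or produce only higher-order errors, as was already flagged in \eqref{e2notfrozen}. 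Once that bookkeeping is done, the estimate \eqref{1alpha1} follows, completing the proof.
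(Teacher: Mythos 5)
Your proof is correct, but it follows a genuinely different (and shorter) route than the paper. Your key observation is the algebraic cancellation: with $e_1(x)=(x-x_0)_1$, $\e e_2(x)=(x-x_0)_2-(x-x_0)_1u(x_0)$, $\tX_1u(x_0,0)=\p_1u(x_0)+u(x_0)\p_2u(x_0)$ and $\tX_2u(x_0,0)=\e\p_2u(x_0)$, the terms $(x-x_0)_1u(x_0)\p_2u(x_0)$ cancel and $P^1_{x_0}u$ is exactly the Euclidean first-order Taylor polynomial of $u$ at $x_0$; the estimate then follows from the standard $C^{1,\alpha}_E$ Taylor remainder bound together with the comparison $d_E\le C_1 d_{x_0,\e}$ of \eqref{de-dcc}, whose constants are uniform for $0<\e<\e_0$ and depend only on the $C^{1,\alpha}_E$ norm of $u$. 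The paper instead argues intrinsically: it joins $x_0$ to $x$ by the exponential curve $\gamma$ of the frozen fields, applies the mean value theorem along $\gamma$, compares $\oe_2$ with $e_2$, and closes a self-referential integral inequality for $\int_0^1|u(\gamma(\tau))-P^1_{x_0}u(\gamma(\tau))|\,d\tau$ by integrating in $t$ and absorbing (using $|(x-x_0)_1|\le 1/2$). What your argument buys is brevity and transparency, at the price of leaning entirely on the exact cancellation tied to the specific choice \eqref{e_2} of $e_2$ and on the pre-stated distance comparison \eqref{de-dcc}; the paper's scheme is robust under perturbations of the definition of $P^1_{x_0}u$ (e.g.\ using $\te_2$ instead of $e_2$, cf.\ \eqref{e2notfrozen}) and is the template one would need when no such Euclidean identification is available. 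One cosmetic point: the middle of your second paragraph first claims that $d_E\le C_1 d_{x_0,\e}$ is ``the wrong direction'' and detours through the (irrelevant) bound $d_{x_0,\e}\le C d_E^{1/3}$ before correctly concluding that $d_E^{1+\alpha}\le C\,d_{x_0,\e}^{1+\alpha}$ follows directly by raising \eqref{de-dcc} to the power $1+\alpha$; that back-and-forth should be deleted, since the first-mentioned inequality is in fact exactly the one you need.
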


\begin{proof} 
Fix the points $x$ and $x_0$ and define the $C^{1,1}$ planar curve
$$\gamma(t)=\exp_{x_0}\bigg( t \big(\oe_1(x,0)X_{1,x_0}+\oe_2(x,0)X_{2,x_0}\big)\bigg),$$
so that $\gamma(1)=x$ and $\gamma(0)=x_0$.   From the mean-value theorem, for all $t\in (0,1)$ we can find
$\tilde t\in (0,t)$ such that
$$u(\gamma(t))-u(x_0)=\oe_1(x,0)X_{1,x_0}(\gamma(\tilde t))+\oe_2(x,0)X_{2,x_0}(\gamma(\tilde t)).$$ Hence for all $t\in (0,1)$ one has
\begin{eqnarray}\label{loc-1alpha-1}
u(\gamma(t))-P_{x_0}^1u(\gamma(t)) && = u(\gamma(t)) - u(x_0) = e_1(\gamma(t))\tX_1u(x_0,0)- e_2(\gamma(t))\tX_2u(x_0,0) \notag \\
&&= \sum_{i=1}^2  \bigg(\oe_i(x,0) X_{i,x_0}u(\gamma(\tilde t)) - e_i(\gamma(t)) \tX_i u(x_0,0)\bigg) \notag \\
&&=(x-x_0)_1 (u(x)-P_{x_0}^1u(x)) \p_{x_2}u(x) +
\oe_2(x,0) \big( X_{2,x_0}u(\gamma(\tilde t))-X_{2,x_0}u(x_0,0)\big) \notag \\
&& \ \ \ \ \ + \oe_2(x,0)\big(X_{2,x_0}u(x_0,0)-\tX_{2,x_0}u(x_0,0)\big)+
\big( \oe_2(x,0)-e_2(\gamma(t))\big) \tX_2 u(x_0,0).
\end{eqnarray}
Next, note that
\begin{eqnarray}
|\oe_2(x,0)-e_2(\gamma(t))| = && \frac{1}{\e} \bigg| -(x-x_0)_1\int_0^1 P_{x_0}^1u(\gamma(\tau)) d\tau -\frac{s^2}{3} +(\gamma(t)-x_0)_1 u(x_0)\bigg| \notag \\
&& \le \frac{1}{\e} \bigg| (x-x_0)_1 \int_0^1 ( P_{x_0}^1u(\gamma(\tau))- u(\gamma(\tau)) d\tau+\frac{s^2}{3}\bigg|  \notag \\ && \ \ \ \  \ \ + \frac{1}{\e} \bigg| \int_0^1 \big( (\gamma(t)-x_0)_1 u(x_0)-(x-x_0)_1 u(\gamma(\tau)) \big) d\tau\bigg| \notag \\
&& \le \frac{C}{\e} d_E^{1+\al}(x,x_0) + \frac{1}{\e} \bigg|  (x-x_0)_1 \int_0^1 ( P_{x_0}^1u(\gamma(\tau))- u(\gamma(\tau))  \ )d\tau\bigg|.
\end{eqnarray}
From the latter, from  \eqref{loc-1alpha-1} and  from observing that $X_{2,x_0}=\tX_{2,x_0}$, and 
$$|\oe_2(x,0) \big( X_{2,x_0}u(\gamma(\tilde t))-X_{2,x_0}u(x_0,0)\big)|\le C d_E^\al(x,x_0) |\e\oe_2(x,0)|$$ we obtain
\begin{multline}\label{loc-1alpha-2}|u(\gamma(t))-P_{x_0}^1u(\gamma(t))| \\ \le C(d_E^{1+\al}(x,x_0) + |\e\oe_2(x,0)| d_E^{\al}(x,x_0)+ |(x-x_0)_1| \int_0^1  | P_{x_0}^1u(\gamma(\tau))- u(\gamma(\tau)) | d\tau.\end{multline}
For $x$ sufficiently close to $x_0$ we have 
$$|\e\oe(x,0)| \le Cd_{x_0,\e}(x,x_0) \text{ and } |(x-x_0)|_1\le \frac{1}{2}.$$
Integrating \eqref{loc-1alpha-2} in the $t$ variable from $0$ to $1$ and using the latter
we obtain
$$\int_0^1  | P_{x_0}^1u(\gamma(\tau))- u(\gamma(\tau)) | d\tau \le C d_{x_0,\e}^{1+\al}(x,x_0).$$
Substituting this estimate back  in  \eqref{loc-1alpha-2}  we conclude the proof.
\end{proof}

\bigskip

\paragraph{\bf The frozen operators.} The freezing process described earlier  allows to introduce {\it 'frozen'} sub-Laplacians operators 
 $N_{\e, x_0}$ formally defined as $N_\e$, but in terms of the smooth vector fields $X^\e_{i, x_0}$
 instead of the original non-smooth vector fields $X_i$. Consider the operators

\begin{equation}\label{nondivfroz}N_{\e, x_0}z = \sum_{i, j=1}^{3}
a_{i j}(\nabla_\e u(x_0)) X_{i, x_0}X_{j, x_0}z,\end{equation}
where $a_{ij}$ are defined in (\ref{aij}), and

\begin{equation}\label{nondivfroz0}N_{x_0}z = \sum_{i,j=1 \atop i\not=2, j\not=2}^{3}
a_{i j}(\nabla_\e u(x_0)) X_{i, x_0}X_{j, x_0}z,\end{equation}

\bigskip

$N_{x_0}$ is an uniformly subelliptic operator with $C^{\infty}$
coefficients, and $N_{\e, x_0}$ can be considered as its elliptic regularization, 
with coefficients dependent on $\e$. The linear theory yields that both 
 $N_{x_0}$ and $ N_{\e, x_0} $  have fundamental
solutions $\G_{x_{0}}$ and $\G_{\e, x_{0}}$ respectively (see \cite{GT}, \cite{NSW} and \cite{BLU}). Since both $\G_{x_{0}}$ and  $\G_{\e, x_{0}}(\z,\x)$
depend on many variables, the notation
 $$X_{i, x_0}(\zeta_1)\G_{\e, x_{0}}( \; \cdot \; ,\x)$$
shall denote the $X_{i, x_0}$-derivative of $\G_{\e, x_{0}}(\zeta, \x)$
with respect to the variable $\zeta$, evaluated at the point $\z_1$.

Precise estimates for the fundamental solution  $\G_{x_{0}}$ have been established in  \cite{NSW} and \cite{BLU}, while in \cite{CiMa-F} it is proved that the fundamental 
solution $\G_{\e, x_{0}}$ of $ N_{\e, x_0} $ locally satisfies the same estimates as the limit 
kernel $\G_{x_{0}}$, with {\it  choice of  constants independent of} $\e$. These results can be summarized as follows;

\begin{theorem}\label{fundam}(\cite{CiMa-F}) Let $z_{0}\in\O$.
 For every compact set $K\subset \Omega\times (-1,1)$, for every $k\in \N$ 
 and for every multi-index $I=(i_1,...,i_k)$ with $i_j\in \{1,2,3\}$, there
  exist two positive
 constants $C, C_p$ independent of $\e$, such that
\begin {equation}\label{2}
|\nabla^I_{\e, x_0 }(\xi)\Gamma_{\e,x_0} (\cdot,\zeta)|\le C_{k }
\frac{d_{x_0,\e } ^{2-k}(\x,\zeta)}{|B_{\e} (\x, d_{x_0,\e}
(\x,\zeta))|}, 
\end {equation}
for every $\x, \z\in K$ with $\x\not= \z$,  where  $B_{\e}(\x,r)$
denotes the ball with center $\x$ and radius $r$ in the distance
$d_{x_0,\e}$, and  $\nabla^I_{\e, x_0 }=X_{i_1,x_0}...X_{i_n,x_0}$ denotes derivatives of order $|I|=k$
along the frozen vector fields $X_{i,x_0}$. If $k =0$ one intends that no derivative are applied on
$\Gamma_{\e, x_0}$.
\end{theorem}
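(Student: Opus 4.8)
This statement is quoted from \cite{CiMa-F}; I sketch the strategy one would follow to establish it. The plan is \emph{not} to invoke the linear elliptic theory for the fixed-$\e$ operator $N_{\e,x_0}$ — whose Euclidean kernel bounds degenerate as $\e\to 0$ — but rather to exhibit the family $\{N_{\e,x_0}\}_{0\le\e\le\e_0}$ as one that is uniformly subelliptic \emph{with respect to the intrinsic geometry of the limit step-three distribution} $\{X_{1,x_0},X_{3,x_0}\}$, and then to build a parametrix for $\Gamma_{\e,x_0}$ in which every remainder is controlled by a constant independent of $\e$. First I would attach to $x_0$, via the Rothschild--Stein lifting already used in the \textbf{Lifting} paragraph, the free nilpotent group $G$ of step three associated to $\{X_{1,x_0},X_{3,x_0}\}$, with its anisotropic dilations $\delta_r$ and homogeneous dimension $Q=5$: in the exponential coordinates $(\oe_1,\oe_2,\oe_3)$ the fields $X_{1,x_0},X_{3,x_0}$ agree to leading order with the left-invariant generators $Y_1,Y_3$ of $G$, and the degenerate direction $\partial_{x_2}$ is spanned by $[Y_3,[Y_1,Y_3]]$, while (because of the factor $1/\e$ in the change of coordinates) $X_{2,x_0}=\e\partial_{x_2}$ becomes simply $\partial_{\oe_2}$, a field which is \emph{lower order} in the $G$-grading. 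Consequently, writing $N_{\e,x_0}$ in the rescaled coordinates $\delta_r^{-1}(\oe)$ and multiplying by $r^2$, one obtains a family that converges locally uniformly, together with all derivatives as $r\to 0$, to the fixed left-invariant sub-Laplacian $\mathcal L_0=\sum_{i,j\ne 2}a_{ij}(\nabla_\e u(x_0))Y_iY_j$ of $G$ when $\e/r\to 0$, and to a uniformly hypoelliptic perturbation of $\mathcal L_0$ when $\e/r$ stays bounded away from zero; the coefficients $a_{ij}(\nabla_\e u(x_0))$ are harmless here since they stay in a fixed uniformly elliptic range. In every case the limit operator has a $\delta_r$-homogeneous fundamental solution $\Phi$ obeying sharp bounds of the form \eqref{2}.

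Next I would run the parametrix argument. Taking the $x_0$-frozen $\Phi$ as the first approximation one writes, schematically, $\Gamma_{\e,x_0}=\Phi+\Phi*R_\e+\Phi*R_\e*R_\e+\cdots$ (convolution on $G$), where the error kernel $R_\e=(\mathcal L_0-N_{\e,x_0})\Phi$ collects two contributions: the discrepancy between each $X_{i,x_0}$ and its osculating generator $Y_i$, which by Taylor expansion carries one extra power of $d_{x_0,\e}$ and is therefore controlled by the $C^{1,\al}$ norm of $u$ and the structure constants alone; and the extra second-order term $a_{22}(\nabla_\e u(x_0))(\e\partial_{x_2})^2$, which is $\e^2$ times a fixed operator and is estimated against $\Phi$ using the comparisons \eqref{de-dcc} between $d_E$, $d_{x_0,\e}$ and $d_{x_0}$ together with the uniform volume lower bound \eqref{stabilityofvolume}. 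Because the $d_{x_0,\e}$-balls are uniformly doubling (again \eqref{stabilityofvolume}) and $d_{x_0,\e}\le C\,d_{x_0}$ with $C$ independent of $\e$, the convolution bounds for the series are majorized termwise by the corresponding bounds for the limit kernel, so the Neumann series converges on compact subsets of $\Om\times(-1,1)$ with constants independent of $\e$. Differentiating the series term by term along the frozen fields $X_{i,x_0}$ and using the homogeneity of $\Phi$ then yields \eqref{2} for every multi-index $I$.

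The main obstacle is precisely the non-uniform ellipticity of $N_{\e,x_0}$: one must prove that, measured in the metric $d_{x_0,\e}$ rather than the Euclidean one, the family behaves like a single subelliptic operator, and this is where the uniform geometric estimates \eqref{stabilityofvolume}--\eqref{de-dcc} carry the whole load and where the argument departs from the classical Rothschild--Stein scheme designed for a fixed H\"ormander system. A secondary technical point is the bookkeeping of the two regimes $\e\lesssim r$ and $\e\gtrsim r$ as the scale $r\to 0$: the cleanest device is to treat $\e$ itself as an additional variable of homogeneous weight one, so that the enlarged operator on $G\times\R$ is genuinely covariant under translations and dilations and the rescaled limits above become a single compactness statement; the estimate \eqref{2} then follows by letting that enlarged kernel be compared, uniformly, with the known sharp bounds for $\Phi$.
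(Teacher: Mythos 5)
You cannot be matched against an internal argument here, because the paper does not prove Theorem \ref{fundam}: the statement is imported from \cite{CiMa-F} and is used in the sequel (Theorem \ref{p402}, Lemma \ref{repder}) purely as a black box. So the only question is whether your sketch would actually deliver the estimate \eqref{2}, and as written it would not: it is a program whose two load-bearing claims are asserted rather than established. First, the claim that the discrepancy kernel $R_\e=(\mathcal{L}_0-N_{\e,x_0})\Phi$ is better than $\Phi$ by a power of $d_{x_0,\e}$ \emph{with constants independent of $\e$} --- in particular that the term $a_{22}(\nabla_\e u(x_0))\,\e^2\p_{x_2}^2\Phi$ can be absorbed --- is precisely the uniformity the theorem encodes; \eqref{stabilityofvolume} and \eqref{de-dcc} compare volumes and distances, they do not by themselves control second derivatives of $\Phi$ in the weight-three direction uniformly in $\e$. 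Second, closing the Neumann series on compact sets with $\e$-independent constants requires uniform convolution/fractional-integration bounds for the $d_{x_0,\e}$-kernels at every step; that is the substance of \cite{CiMa-F}, not a corollary of uniform doubling alone.

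There is also a concrete scaling error. In the osculating step-three group the direction $\p_{x_2}$ has weight $3$ (whence $Q=1+1+3=5$), so under the anisotropic dilations $(\e\p_{x_2})^2$ scales like $\e^2r^{-6}$ against $r^{-2}$ for the principal part: the perturbation is \emph{not} ``lower order'' in the graded calculus, and if you adjoin $\e$ as an extra homogeneous variable so that the enlarged operator is genuinely dilation-covariant, $\e$ must carry weight $2$, not $1$ (weight $1$ is correct only in step-two situations such as the Heisenberg group itself). Accordingly the two regimes to track are $\e\lesssim r^{2}$ and $\e\gtrsim r^{2}$, i.e. the crossover scale is $r\sim\sqrt{\e}$, not $r\sim\e$. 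None of this is fatal to the strategy --- it is essentially the freezing/parametrix strategy of the cited work --- but with these corrections your text is an outline of that proof with its key uniform estimates presupposed, and for the purposes of the present paper the correct justification of Theorem \ref{fundam} is the citation itself.
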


\begin{remark}\label{notabene} Let $u$ be as in Lemma \ref{d1alpha}.
 Notice that if we set
$$\mathcal R=X_{1,u}-X_{1,x_0}= (u(x)-P_{x_0}^1u(x)-s^2)\p_{x_2},$$
then since $\p_{x_2}$ has order three, and in view of Lemma \ref{d1alpha}, the operator $\mathcal R$ has order $2-\alpha$ at the point $x_0$ (in the sense of \cite{fol:1975}).
Consequently, the  estimates in \eqref{2} do not continue to  hold
 if the derivatives along the frozen vector fields $\nabla^I_{\e x_0 }$, $|I|=2$,
are substituted by derivatives along $X_{i,u}^\e$ evaluated at the point $\xi=x_0$.
\end{remark}


%

Since the measure of the Ball is doubling, with doubling constant independent of $\e$, then
the space $(\Om\times(-1,1), d_{x_0,\e},dx)$ is a space of  homogenous type and the 
the following version of the 
fractional integration  theorem holds, (see for instance   \cite{cdg:isoperimetric})
\begin{proposition}\label{singularintegral} 
If  $K$ is a fixed compact set $K\subset\subset \Omega\times (-1,1)$, if 
$f\in L^q(\Om\times (-1,1))$  supported in $K$, and for each $(x,s)\in  \Omega\times (-1,1)$ we set
\begin{equation}\label{fio}
 I_p(f)(x,s) = \int\limits \frac{d_{x_0,\e}^{{p}}((x,s), (\z,\sigma))}{|B_\e((x,s), d_{x_0,\e}((x,s),( \z,\sigma))|}  f(\z,\sigma)  d\z d\sigma
 \end{equation}
then  there  exists a constant $C$, 
depending on $K,r,q$ but independent of $\e$ such that,
$$||I_p(f)||_{L^r( \Omega\times (-1,1))}\leq C ||f||_{L^q( \Omega\times (-1,1))},$$
and where $5-pq>0$ and $r= \frac{5q}{5-pq}$.
\end{proposition}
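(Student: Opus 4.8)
The statement is a Hardy--Littlewood--Sobolev (fractional integration) inequality on the space of homogeneous type $(\Om\times(-1,1),d_{x_0,\e},dx\,ds)$, and the whole point is that the constant must not deteriorate as $\e\to 0$. The plan is to prove it by the classical Hedberg argument: derive a pointwise bound for $I_p(f)$ in terms of $\|f\|_{L^q}$ and of an uncentered maximal function $\mathcal M_\e f$ adapted to the balls $B_\e$, and then apply the Hardy--Littlewood maximal theorem on spaces of homogeneous type. Equivalently, one could simply invoke the general fractional-integration theorem on homogeneous spaces (as in \cite{cdg:isoperimetric}), observing that its constant depends only on the doubling constant and on the lower volume-growth constant, both of which we must check are uniform in $\e$.

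The first task is therefore to record the two geometric ingredients, uniformly in $\e$. (a) The measure of $B_\e((x,s),R)$ is doubling with a constant independent of $\e$; for $(x,s)$ near $x_0$ this follows from the explicit box--ball estimates quoted above for $d_{x_0,\e}$ (the coefficients of $X_{1,x_0},X_{2,x_0},X_{3,x_0}$ being controlled by the constant $M$ of \eqref{MM}), together with the convergence of the $d_{x_0,\e}$-balls to the $d_{x_0}$-balls from \cite{CiMa-F}. (b) There is $c>0$, independent of $\e$ and of the center, with $|B_\e((x,s),R)|\ge c R^{5}$ for all $R$ below a fixed bound; for the center $(x_0,0)$ this is \eqref{stabilityofvolume}, and the general center case follows from the same Nagel--Stein--Wainger-type estimates. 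Since everything is supported in the compact set $K$, only bounded radii are relevant, so no upper volume bound or global scaling is needed.

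Next comes the pointwise estimate. Fixing $(x,s)$ and a parameter $R>0$, split $I_p(f)(x,s)=\int_{d_{x_0,\e}<R}+\int_{d_{x_0,\e}\ge R}$. For the near part, decompose the region $d_{x_0,\e}<R$ into dyadic annuli $2^{-j-1}R\le d_{x_0,\e}<2^{-j}R$; on each annulus the kernel is at most $C(2^{-j}R)^{p}/|B_\e((x,s),2^{-j}R)|$, while $\int_{d_{x_0,\e}<2^{-j}R}|f|\le |B_\e((x,s),2^{-j}R)|\,\mathcal M_\e f(x,s)$, so summing the geometric series (here $p>0$ is used) gives $\int_{d_{x_0,\e}<R}\le C R^{p}\mathcal M_\e f(x,s)$. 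For the far part, Hölder's inequality bounds it by $\|f\|_{L^q}$ times $\bigl(\int_{d_{x_0,\e}\ge R} d_{x_0,\e}^{pq'}|B_\e((x,s),d_{x_0,\e})|^{-q'}\bigr)^{1/q'}$; a dyadic decomposition together with the lower bound $|B_\e|\ge c\,d_{x_0,\e}^{5}$ turns this into $C\bigl(\sum_{j\ge0}(2^jR)^{pq'+5-5q'}\bigr)^{1/q'}$, and the exponent $pq'+5-5q'$ is negative precisely because $pq<5$, so the far part is at most $C R^{p-5/q}\|f\|_{L^q}$. Optimizing $R^{p}\mathcal M_\e f+R^{p-5/q}\|f\|_{L^q}$ over $R$ yields the pointwise bound $I_p(f)(x,s)\le C\,\|f\|_{L^q}^{pq/5}\,(\mathcal M_\e f(x,s))^{1-pq/5}$.

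Finally, raise this to the power $r$ and integrate: $\|I_p(f)\|_{L^r}^{r}\le C\|f\|_{L^q}^{rpq/5}\int(\mathcal M_\e f)^{r(1-pq/5)}$, and the bookkeeping $r(1-pq/5)=q$ shows the last integral is $\|\mathcal M_\e f\|_{L^q}^{q}$, which is $\le C\|f\|_{L^q}^{q}$ by the Hardy--Littlewood maximal inequality on a space of homogeneous type (valid since $q>1$, with constant depending only on the doubling constant, hence independent of $\e$). Since $pq/5+q/r=1$, the powers of $\|f\|_{L^q}$ combine to $r$, giving $\|I_p(f)\|_{L^r}\le C\|f\|_{L^q}$ with $C$ depending only on $K$, $q$, $p$ and the (uniform) structure constants. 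The main obstacle, and the only genuinely delicate point, is exactly the $\e$-uniformity of the doubling constant and of the lower bound $|B_\e|\ge cR^{5}$; this is supplied by \eqref{stabilityofvolume}, the explicit box--ball comparisons displayed above, and the estimates of \cite{CiMa-F}, after which the argument is the standard one.
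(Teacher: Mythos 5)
Your proof is correct, but it is genuinely more self-contained than what the paper does: the paper offers no proof at all, simply observing that the balls for $d_{x_0,\e}$ are doubling with constant independent of $\e$, so that $(\Om\times(-1,1),d_{x_0,\e},dx)$ is a space of homogeneous type, and then citing the fractional integration theorem from \cite{cdg:isoperimetric}. You instead rederive the inequality by the Hedberg scheme (near part controlled by $R^{p}\mathcal M_\e f$, far part by H\"older plus the lower volume bound $|B_\e|\ge cR^{5}$, optimization in $R$, then the Hardy--Littlewood maximal theorem on a homogeneous space), which has the advantage of making explicit exactly which geometric constants must be $\e$-uniform: not only the doubling constant (which is all the paper mentions at this point) but also the lower volume bound, which is what produces the exponent relation $r=5q/(5-pq)$ with $Q=5$; the paper supplies that bound in \eqref{stabilityofvolume}, though only at the center $(x_0,0)$, and your remark that the Nagel--Stein--Wainger ball-box estimates give it uniformly in the center over a compact set (and uniformly in $\e$, since $|B_\e(\rho)|\approx \rho^5+\e\rho^3+\dots\ge c\rho^5$ for bounded $\rho$) is the right way to close that small gap. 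Two minor points to be aware of: in the far-part dyadic sum you also need to bound the measure of each annulus, which follows from doubling together with the negative exponent $1-q'$ applied to the lower volume bound (your phrasing compresses this, but the ingredients you listed suffice); and the strong-type conclusion requires $q>1$, which is implicit in the proposition and satisfied in all the paper's applications ($q=p>10/3$), so your appeal to the maximal theorem is legitimate.
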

\begin{corollary} \label{corollarioscemo} Let $f\in C^{\infty}_0(\Om)$ and  extend it to a function on $ \Omega\times (-1,1)$ by setting $f(x,s):=f(x)$. Let $\mathcal K:(\Om\times(-1,1))^2 \to \R$ be a kernel satisfying
$$|\mathcal K((x,s),(\z,\sigma)) | \le C \frac{d_{x_0,\e}^{{p}}((x,s), (\z,\sigma))}{|B_\e((x,s), d_{x_0,\e}((x,s),( \z,\sigma))|} .$$ Set $u(x,s)$ to be defined
as $$u(x,s)=\int\mathcal K((x,s),(\z,\sigma)) f(\xi) d\z d\sigma.$$
If $u(x,s)=u(x)$ for all values of $s$ then 
$$||u||_{L^r( \Omega)}\leq C ||f||_{L^q( \Omega)},$$ with $r$ and $q$ as in the previous proposition.  
\end{corollary}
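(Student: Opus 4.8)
The plan is to deduce the statement directly from the fractional integration estimate of Proposition \ref{singularintegral} applied on the homogeneous space $(\Om\times(-1,1), d_{x_0,\e}, d\z\,d\sigma)$, exploiting that the datum $f$ and, by hypothesis, the output $u$ are independent of the fibre variable $s$, so that Fubini's theorem turns the $L^r$ and $L^q$ norms over $\Om\times(-1,1)$ into the corresponding norms over $\Om$ up to a harmless numerical constant.

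The first step is a pointwise domination. Extend $|f|$ to a function $F$ on $\Om\times(-1,1)$ by $F(\z,\sigma):=|f(\z)|$; since $f\in C_0^\infty(\Om)$, the function $F$ is bounded, supported in $(\mathrm{supp}\,f)\times(-1,1)$, and $F\in L^q(\Om\times(-1,1))$ with $||F||_{L^q(\Om\times(-1,1))}^q = 2\,||f||_{L^q(\Om)}^q$. By the size bound assumed on $\mathcal{K}$, for every $(x,s)$,
\[
|u(x,s)|\le \int |\mathcal{K}((x,s),(\z,\sigma))|\,|f(\z)|\,d\z\,d\sigma \le C\,I_p(F)(x,s),
\]
where $I_p$ is the fractional integration operator of \eqref{fio}. (If one insists on the compact-support hypothesis of Proposition \ref{singularintegral}, one first cuts $f$ off outside a compact subset of $\Om$, which in the applications of this corollary is automatic since $f$ is already compactly supported; the $s$-slab being bounded, this causes no difficulty.) The second step is to invoke Proposition \ref{singularintegral}: with $5-pq>0$ and $r=\tfrac{5q}{5-pq}$ one gets
\[
||u||_{L^r(\Om\times(-1,1))}\le C\,||I_p(F)||_{L^r(\Om\times(-1,1))}\le C\,||F||_{L^q(\Om\times(-1,1))}.
\]
The third step uses the hypothesis $u(x,s)=u(x)$: Fubini gives $||u||_{L^r(\Om\times(-1,1))}^r = 2\,||u||_{L^r(\Om)}^r$, whence
\[
||u||_{L^r(\Om)} = 2^{-1/r}\,||u||_{L^r(\Om\times(-1,1))}\le 2^{-1/r}\,C\,||F||_{L^q(\Om\times(-1,1))} = 2^{1/q-1/r}\,C\,||f||_{L^q(\Om)},
\]
which is the asserted inequality once the dimensional factor is absorbed into the constant.

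I do not expect a genuine obstacle: the entire content of the corollary is Proposition \ref{singularintegral}, and the role of this statement is merely to record that a kernel of fractional-integration size, acting on an $s$-independent function and producing an $s$-independent function, satisfies the same $L^q\to L^r$ bound on the base $\Om$. The only point needing a word of care is the compact-support assumption in Proposition \ref{singularintegral}, dealt with by the trivial localization noted above; everything else is the bookkeeping of the extra variable.
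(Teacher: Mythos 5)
Your proposal is correct and is essentially the argument the paper has in mind: the paper states this corollary without a separate proof, treating it as an immediate consequence of Proposition \ref{singularintegral}, and your pointwise domination $|u(x,s)|\le C\,I_p(F)(x,s)$ followed by Fubini in the $s$-variable (using the $s$-independence of $f$ and $u$ to convert slab norms into norms over $\Om$, at the cost of factors of $2$ absorbed into the constant) is exactly that routine deduction. The remark about the compact-support hypothesis is handled appropriately and poses no real issue on the bounded slab.
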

As a  consequence of the definition of fundamental solution one has the following representation formula:
\begin{proposition}\label{p402r}
Let   $  u$ be a fixed smooth function defined in $\O,$  let $N_{\e,u}$ be as in \eqref{nondivlin}, and let $z$ be a classical solution of $N_{\e,
u}z = g \in C^\infty(\O).$ Extend both $u$ and $z$ to be functions defined on $\Om\times (-1,1)$ by letting them be constant along the $s$ variable.
For  any $\phi\in C^\infty_0(\O\times(-1,1))$,  $\xi\in \O$ and $s\in (-1,1)$,
   the product  $z(\xi)\phi (\xi,s)$ can be represented as

\begin {equation}\label{prima}
  \begin {split}
   z(\xi)\phi(\x,s)  &
    =   \int\limits_{\O\times(-1,1)} \G((\x,s),(\z,\sigma))
   \left( z \,N_{\e x_0}\phi + \sum_{ij=1}^{2}\bar a_{ij}(x_0)
   \Big(X_{i,z_0 }z X_{j,z_0 }\phi + X_{j,z_0 }z X_{i,z_0}\phi\Big)\right) d\z\\
   &+  \int\limits_{\O\times(-1,1)} \G_{\e, x_{0}}((\x,s),(\z,\sigma))
\  g (\z)\, \phi(\z,\s)  d\z d\sigma  +
 \\
   & +  \sum_{ij=1}^{2} \int\limits_{\O\times(-1,1)} \G_{\e, x_{0}}((\x,s),(\z,\sigma))
\Big(\bar a_{ij}(x_0)- \bar a_{ij}(\z)\Big)X_{i, u } X_{j, u }z(\z)
   \, \phi(\z)  d\z d\sigma
\\
   &
-   \sum_{j=1}^{2}\bar a_{1 j}(x_0)\int\limits_{\O\times(-1,1)}
\G_{\e ,x_{0}}((\x,s),(\z,\sigma))\Big( u(\zeta)- P^1_{x_0} u(\zeta) -
\s^2\Big)\p_{2}X_{j,  u}z(\z)
   \, \phi(\z,\s)  d\z d\sigma
 \\
   &  + \sum_{i=1}^{2}\bar a_{i 1}(x_0)\int\limits_{\O\times(-1,1)}  X_{i, x_0}(\x,s)\G_{\e, x_{0}}(\cdot,(\z,\sigma))
 \Big( u(\zeta)- P^1_{x_0} u (\z) - \s^2\Big)\p_{2} z(\z)
   \, \phi(\z,\s)  d\z d\sigma
 \\
   &  + \sum_{i=1}^{2}\bar a_{i 1}(x_0)\int\limits_{\O\times(-1,1)}  \G_{\e, x_{0}}((\x,s),(\z,\sigma))
 \Big( u(\zeta)- P^1_{x_0} u (\z) - \s^2\Big)\p_{2} z(\z)
   \, X_{i,x_0}\phi(\z,\s)  d\z d\sigma.
\end{split}\end{equation}
In order to simplify notations we have set
$$\bar a_{ij}(\z) = a_{ij}(\nabla_\e u(\z)).$$
\end{proposition}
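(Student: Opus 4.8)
The plan is to derive the representation formula~\eqref{prima} as a parametrix-type identity, treating $\G_{\e,x_0}$ (the fundamental solution of the frozen elliptic sub-Laplacian $N_{\e,x_0}$ from~\eqref{nondivfroz}) as a parametrix for the original linearized operator $N_{\e,u}$. First I would write the defining property of $\G_{\e,x_0}$: for fixed $(\xi,s)$, applying $N_{\e,x_0}$ in the $(\zeta,\sigma)$ variables to $\G_{\e,x_0}((\xi,s),(\zeta,\sigma))$ gives the Dirac mass at $(\xi,s)$, so that for any test function $\psi\in C_0^\infty(\O\times(-1,1))$ one has $\psi(\xi,s)=\int \G_{\e,x_0}((\xi,s),(\zeta,\sigma))\,N_{\e,x_0}^*\psi(\zeta,\sigma)\,d\zeta\,d\sigma$, where the adjoint can be handled because the coefficients $a_{ij}(\nabla_\e u(x_0))$ are constant in $(\zeta,\sigma)$ and the frozen vector fields $X_{i,x_0}$ are smooth and divergence-free up to lower-order terms. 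I would apply this with $\psi=z\phi$ (both extended to be constant in $s$), which is legitimate since $z\phi\in C_0^\infty$.

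Next I would expand $N_{\e,x_0}(z\phi)$ using the Leibniz rule for the second-order operator: $N_{\e,x_0}(z\phi)=\phi\,N_{\e,x_0}z + z\,N_{\e,x_0}\phi + \sum_{ij}\bar a_{ij}(x_0)\big(X_{i,x_0}z\,X_{j,x_0}\phi + X_{j,x_0}z\,X_{i,x_0}\phi\big)$. The term $z\,N_{\e,x_0}\phi$ and the mixed first-order terms are exactly the first line of~\eqref{prima}, where I would integrate by parts on the $z$-derivatives to transfer them onto $\G_{\e,x_0}$ where needed, but actually the cleaner route is to keep them as written — the first line already displays $zN_{\e,x_0}\phi$ plus the symmetric mixed products, so no further integration by parts is needed there. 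The real work is in rewriting $\phi\,N_{\e,x_0}z$: here I would use the hypothesis $N_{\e,u}z=g$ to substitute, which forces me to track the difference $N_{\e,x_0}z - N_{\e,u}z$. This difference has two sources: (a) the coefficient mismatch $\bar a_{ij}(x_0)-\bar a_{ij}(\zeta)$ multiplying $X_{i,u}X_{j,u}z$, giving the third line; and (b) the vector-field mismatch $X_{i,x_0}-X_{i,u}=-\mathcal R=(u(x)-P_{x_0}^1u(x)-s^2)\p_{x_2}$ from Remark~\ref{notabene}, which after expanding $X_{i,x_0}X_{j,x_0}z = X_{i,u}X_{j,u}z + (\text{commutator and lower-order terms in }\p_2 z,\ \p_2 X_{j,u}z)$ produces precisely the fourth, fifth and sixth lines, the last two arising from the one surviving integration by parts on the $X_{i,x_0}$-derivative that hits $\G_{\e,x_0}$ (since $\p_{x_2}$ is in the range of $X_{i,x_0}$ composed once more, one puts one derivative on $\G$ and leaves one on $\phi$ for the $i=1$ terms — hence the split into a term with $X_{i,x_0}(\xi)\G$ and a term with $X_{i,x_0}\phi$). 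The factor $g$ contributes the second line directly.

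The bookkeeping obstacle — and the step I expect to be most delicate — is the careful expansion in source (b): $X_{i,x_0}X_{j,x_0}z$ versus $X_{i,u}X_{j,u}z$ when only $i$ or $j$ equals $1$ (the index where the vector fields differ), keeping track of the extra $\p_2$-derivatives that appear when $\mathcal R$ acts, and deciding which of these land with one derivative on $\G_{\e,x_0}$ (integrating by parts once, legitimate since $\G_{\e,x_0}$ and its first frozen derivatives are locally integrable by Theorem~\ref{fundam}) and which stay as $\p_2 z$ or $\p_2 X_{j,u}z$ against $\G_{\e,x_0}$ undifferentiated. One must also verify that all boundary terms from integration by parts vanish, which holds because $\phi$ has compact support in $\O\times(-1,1)$. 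Once the algebra is organized, matching terms against the six lines of~\eqref{prima} is mechanical; I would present the identity as the result of this single parametrix substitution followed by the Leibniz expansion and the two mismatch decompositions, remarking that the apparent asymmetry between the lines with $X_{i,x_0}(\xi)\G$ and the lines with $X_{i,x_0}\phi$ is simply a choice of how to distribute the two $X_{i,x_0}$-derivatives that the operator $N_{\e,x_0}$ supplies.
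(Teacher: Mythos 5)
Your proposal is correct and follows essentially the same route as the paper: the parametrix identity $z\phi=\int \G_{\e,x_0}\,N_{\e,x_0}(z\phi)$, the Leibniz expansion, the substitution $N_{\e,u}z=g$ together with the decomposition of $N_{\e,x_0}-N_{\e,u}$ into the coefficient mismatch $\bar a_{ij}(x_0)-\bar a_{ij}(\z)$ and the vector-field mismatch $(u-P^1_{x_0}u-\s^2)\p_2$, and a single integration by parts on the term where $X_{i,x_0}$ hits the mismatch, which produces the last two lines of \eqref{prima}. This is exactly the paper's argument, so no further comparison is needed.
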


\begin{proof}
 In view of the definition of fundamental solution, we have
\begin {equation}\label{prima1}
  \begin {split}
   z(\xi)\phi(\x,s)   & =\!\!\!\!\!  \int\limits_{\O\times(-1,1)} \G_{\e, x_{0}}((\x,s),(\z,\sigma))
   N_{\e x_0}(z\phi)(\z,\sigma) d\z d\sigma\\
   & = \!\!\!\!\!   \int\limits_{\O\times(-1,1)} \G_{\e, x_{0}}((\x,s),(\z,\sigma))
   \left( z \,N_{\e x_0}\phi + \sum_{ij=1}^{2}\bar a_{ij}(x_0)
   \Big(X_{i,x_0 }u X_{j,x_0 }\phi + X_{j,x_0 }z X_{i,x_0}\phi\Big)\right) d\z d\sigma\\
   &+  \!\!\!\!\! \int\limits_{\O\times(-1,1)} \G_{\e, x_{0}}((\x,s),(\z,\sigma))
   N_{\e u} z (\z)\, \phi(\z,\s)  d\z d\sigma  \\
&+ \!\!\!\!\! \int\limits_{\O\times(-1,1)} \G_{\e, x_{0}}((\x,s),(\z,\sigma))
   (N_{\e x_0}  - N_{\e u} )z (\z, \sigma)\, \phi(\z,\s)  d\z d\sigma.
 \end{split}\end{equation}
 
Let us now compute the difference between the operator $N_{\e 
u}$ and its frozen counterpart. We will emphasize the presence of the variable $\sigma$
in those terms where the coefficients of the vector fields involve that variable.
 \begin {equation}\label{LmenoL}
 \begin {split}
  \bigg((N_{\e x_0} - & N_{\e u})z\bigg)(\z, \sigma) =
\sum_{ij=1}^{2}\Big(\bar  a_{ij}(x_0)-
\bar a_{ij}(\z)\Big)X_{i, u } X_{j, u }z(\z) -
\\
   &-\sum_{ij=1}^{2}\bar a_{ij}(z_0)\Big(X_{i, u }
X_{j, u }-X_{i,x_0 } X_{j,x_0 }\Big) z(\z,\sigma) =
\\
& = \sum_{ij=1}^{2}\Big(\bar a_{ij}(x_0)-
\bar a_{ij}(\z)\Big)X_{i, u } X_{j,u }z(\z) -
\\&-\sum_{ij=1}^{2}\bar a_{ij}(x_0)\Big((X_{i, u }-X_{i,x_0
   })X_{j, u}+X_{i,x_0 }(
X_{j, u }-X_{j,x_0 }) \Big) z(\z,\s)
\\
& = \sum_{ij=1}^{2}\Big(\bar a_{ij}(x_0)-
\bar a_{ij}(\z)\Big)X_{i, u } X_{j, u }z(\z) -
\\
 &-\sum_{ij=1}^{2}\bar a_{ij}(x_0)\Big(\delta_{i 1}(u(\zeta)- P^1_{z_0}
u(\zeta)-\s^2)\p_{2}X_{j,u} +
X_{i,x_0}(\delta_{j,1}( u(\zeta)- P^1_{x_0}
u(\zeta) - \s^2)\p_{2})\Big)z(\z)
\\& =
\sum_{ij=1}^{2}\Big(\bar a_{ij}(x_0) - \bar a_{ij}(\z)\Big)X_{i, u
} X_{j, u }z(\z)
\\&
-\sum_{i=1}^{2}\bar a_{i 1}(x_0)(u(\zeta)- P^1_{x_0}
u(\zeta) -\s^2)\p_{2}X_{i,  u}z(\z)
\\&
- \sum_{i=1}^{2}\bar a_{i 1}^\e(x_0)X_{i,x_0}\Big(( u(\zeta)-
P^1_{x_0}u (\z)- \s^2)\p_{2}\Big)z(\z).
\end{split}\end{equation}
The integral of the  last   term  in (\ref{LmenoL}) becomes
\begin{equation}\label{formula3}
\begin{split}
 & \sum_{i=1}^{2}  \bar a_{i 1}(x_0)\int\limits_{\O\times(-1,1)}    \G_{\e, x_{0}}((\x,s),(\z,\sigma)) X_{i, x_0}\Big(( u(\zeta)- P^1_{x_0}u(\zeta) - \s^2)\p_{2}\Big)z(\z)
\phi(\z,\s)  d\z d\s
\end{split}
\end{equation}
(integrating by part)
\begin{equation}
\begin{split}
  = &-  \sum_{i=1}^{2} \bar a_{i 1}(x_0) \int\limits_{\O\times(-1,1)}   X_{i,x_0}(\x,s) \G_{\e, x_{0}}(\cdot,(\z,\sigma)) \,
( u(\zeta)- P^1_{x_0}u(\zeta)- \s^2)\p_{2}  z(\z)\, \phi(\z,\s) d\z d\s \\
 &-  \sum_{i=1}^{2}  \bar a_{i 1}(x_0)\int\limits_{\O\times(-1,1)}    \G_{\e, x_{0}}((\x,s),(\z,\sigma)) \,
(u(\zeta)- P^1_{x_0} u(\zeta)- \s^2)\p_{2}  z(\z)\,
X_{i,x_0}\phi(\z,\s) d\z d\s
\end{split}
\end{equation}
Inserting all terms in the preceeding formula we conclude the proof. 
\end{proof}
 
 \begin{lemma}\label{L-comparing}
 Let $u$ and $x_0\in \Om$ be as above. There exists a neighborhood $U$ of $(x_0,0)$,
 possibly depending on $\e$, such that for all   $(\xi,s)\in U$ one has
 \begin{multline}\label{comparing}
\bigg| \bigg(X_{k,x_0}X_{l,x_0} (\xi,s) -  X_{k,x_0}X_{l,x_0} (x_0,0) \bigg) \nabla_{x_0,\e} \Gamma_{\e, x_{0}}(\cdot,(\z,\sigma))\bigg|\\  \le  d_{x_0}((\xi,s),(x_0,0))
\Bigg( \big| \nabla_{x_0,\e}^I \Gamma_{\e, x_{0}}((\x,s),(\z,\s))\big| 
+ \big| \nabla_{x_0,\e}^I \Gamma_{\e, x_{0}}((x_0,0),(\z,\s))\big|\Bigg), \text{ with }|I|=4.
 \end{multline}
 \end{lemma}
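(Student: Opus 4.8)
In the notation of the paper, $X_{k,x_0}X_{l,x_0}(\xi,s)\,\nabla_{x_0,\e}\Gamma_{\e,x_{0}}(\cdot,(\z,\sigma))$ is the function
\[
G(\cdot):=X_{k,x_0}X_{l,x_0}\,\nabla_{x_0,\e}\Gamma_{\e,x_{0}}(\cdot,(\z,\sigma))
\]
evaluated at $(\xi,s)$, so the left-hand side of \eqref{comparing} is the increment $G((\xi,s))-G((x_0,0))$. The plan is to estimate this increment by the fundamental theorem of calculus along a sub-unit curve for the frozen distribution $\{X_{1,x_0},X_{3,x_0}\}$. What makes this legitimate is that $P^1_{x_0}u$ is affine in $x$, so the coefficients of $X_{1,x_0},X_{2,x_0},X_{3,x_0}$ are $C^\infty$ (indeed polynomial); hence $G$ is an honest iterated frozen derivative $\nabla^{I'}_{x_0,\e}\Gamma_{\e,x_0}(\cdot,(\z,\sigma))$ with $|I'|=3$, and it is smooth on $\O\times(-1,1)$ away from the pole $(\z,\sigma)$.

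First I would fix $(\z,\sigma)\ne(x_0,0)$ and choose the neighborhood $U$ so small — this is where the dependence on $\e$, through \eqref{de-dcc}, enters — that for every $(\xi,s)\in U$ there is a sub-unit curve $\gamma\colon[0,T]\to\O\times(-1,1)$ for $\{X_{1,x_0},X_{3,x_0}\}$ with $\gamma(0)=(x_0,0)$, $\gamma(T)=(\xi,s)$, $T\le 2\,d_{x_0}((\xi,s),(x_0,0))$, and whose image stays in the $d_{x_0,\e}$-ball centered at $(x_0,0)$ of radius $\tfrac12 d_{x_0,\e}((x_0,0),(\z,\sigma))$. Writing $\gamma'(t)=a_1(t)X_{1,x_0}(\gamma(t))+a_3(t)X_{3,x_0}(\gamma(t))$ with $a_1^2+a_3^2\le1$, the chain rule gives
\begin{equation*}
G((\xi,s))-G((x_0,0))=\int_0^T\Big(a_1(t)\,X_{1,x_0}G+a_3(t)\,X_{3,x_0}G\Big)(\gamma(t))\,dt .
\end{equation*}
The key bookkeeping observation is that each of $X_{1,x_0}G$ and $X_{3,x_0}G$ is precisely $\nabla^{I}_{x_0,\e}\Gamma_{\e,x_0}(\cdot,(\z,\sigma))$ for a multi-index $I$ of length $4$ — this is exactly why a fourth order derivative, and not a third, appears on the right-hand side. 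Consequently
\begin{equation*}
\big|G((\xi,s))-G((x_0,0))\big|\le 2\,d_{x_0}((\xi,s),(x_0,0))\ \sup_{\eta\in\gamma([0,T])}\ \sum_{|I|=4}\big|\nabla^{I}_{x_0,\e}\Gamma_{\e,x_0}(\eta,(\z,\sigma))\big| .
\end{equation*}

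It remains to replace the supremum along $\gamma$ by the two endpoint quantities. Since $\gamma$ lies in a $d_{x_0,\e}$-ball of radius at most $\tfrac12 d_{x_0,\e}((x_0,0),(\z,\sigma))$, the triangle inequality yields $d_{x_0,\e}(\eta,(\z,\sigma))\simeq d_{x_0,\e}((x_0,0),(\z,\sigma))\simeq d_{x_0,\e}((\xi,s),(\z,\sigma))$ for every $\eta\in\gamma$, and the $\e$-uniform doubling of the $d_{x_0,\e}$-balls makes the denominators in \eqref{2} comparable at all these points; applying the $k=4$ case of Theorem \ref{fundam} at $\eta$, at $(x_0,0)$ and at $(\xi,s)$ then bounds the supremum above by a constant independent of $\e$ times $\sum_{|I|=4}\big(|\nabla^{I}_{x_0,\e}\Gamma_{\e,x_0}((\xi,s),(\z,\sigma))|+|\nabla^{I}_{x_0,\e}\Gamma_{\e,x_0}((x_0,0),(\z,\sigma))|\big)$, which together with the last display is the assertion.

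I expect the only real difficulty to be this last step: one must bound the fourth order frozen derivative of $\Gamma_{\e,x_0}$ at the interior points of the connecting curve in terms of its sizes at the two endpoints, \emph{uniformly in} $\e$. This is impossible with the naive, $\e$-degenerate elliptic estimates for $N_{\e,x_0}$, and it is exactly what the $\e$-uniform Nagel--Stein--Wainger type bounds of Theorem \ref{fundam} (from \cite{CiMa-F}), together with the $\e$-uniform doubling property and the chain of comparisons \eqref{de-dcc}, are designed to supply; it also forces $U$ to be chosen small in a manner that depends on $\e$ (and on $(\z,\sigma)$), so that $\gamma$ remains at a scale comparable to $d_{x_0,\e}(\cdot,(\z,\sigma))$, where the Riemannian, the sub-Riemannian and the Euclidean geometries are all equivalent with $\e$-independent constants. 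The passage from the sub-unit length of $\gamma$ to the factor $d_{x_0}((\xi,s),(x_0,0))$ claimed in \eqref{comparing} is immediate, since that length is by definition the frozen Carnot--Carath\'eodory distance between the endpoints.
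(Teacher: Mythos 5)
Your first two steps coincide with the paper's argument: the paper also writes the increment of the third--order frozen derivative $f=X_{k,x_0}X_{l,x_0}X_{i,x_0}\Gamma_{\e,x_0}(\cdot,(\z,\s))$ as an integral along a horizontal (sub-unit) curve for the frozen distribution joining $(x_0,0)$ to $(\xi,s)$, whose length is essentially $d_{x_0}((\xi,s),(x_0,0))$, and this is exactly why derivatives of order $|I|=4$ appear on the right. The genuine gap is in your last step, where you replace $\sup_{\gamma}\sum_{|I|=4}|\nabla^{I}_{x_0,\e}\Gamma_{\e,x_0}(\cdot,(\z,\s))|$ by the endpoint values using Theorem \ref{fundam} and doubling. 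Theorem \ref{fundam} provides only \emph{upper} bounds of Nagel--Stein--Wainger type. From ``interior value $\le$ kernel bound'' and ``endpoint value $\le$ kernel bound'' you cannot conclude ``interior value $\le C\,(\text{endpoint values})$'': that would require a matching \emph{lower} bound for $|\nabla^{I}_{x_0,\e}\Gamma_{\e,x_0}|$ at the two endpoints, which is not available and is false in general (a fourth-order derivative of $\Gamma_{\e,x_0}$ may vanish, or be arbitrarily small, at a prescribed point while being of the size of the kernel bound along the curve). So as written your argument does not prove the inequality \eqref{comparing}, whose right-hand side consists of the actual derivative values at $(\xi,s)$ and $(x_0,0)$. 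What you do prove is the weaker variant with the kernel bounds of \eqref{2} on the right-hand side; that version happens to be all that is used later (in the estimate of $A_1$ in Lemma \ref{repder} the endpoint derivatives are immediately re-estimated by Theorem \ref{fundam}), but it is not the statement of the lemma.

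The paper closes this step differently, and more cheaply: since $\e>0$ is fixed and the frozen coefficients are polynomial, $\Gamma_{\e,x_0}(\cdot,(\z,\s))$ is smooth away from its pole, so $\nabla_{x_0,\e}f$ is continuous near $(x_0,0)$; hence on a sufficiently small neighborhood $U$ -- whose size is allowed to depend on the $C^1$ norm of $f$, i.e.\ on $\e$ (and on the distance to the pole) -- one has $|\nabla_{x_0,\e}f(\eta)|\le 2\big(|\nabla_{x_0,\e}f(x_0,0)|+|\nabla_{x_0,\e}f(\xi,s)|\big)$ along the curve. This purely qualitative compactness/continuity argument is precisely the reason the lemma is stated with ``$U$ possibly depending on $\e$'' and the reason for the footnote in Lemma \ref{repder} about working at scale $o(\e)$; no uniform-in-$\e$ estimate is needed at this stage, and the uniform bounds of Theorem \ref{fundam} enter only afterwards, when $A_1$, $A_2$, $A_3$ are estimated. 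If you want to keep your route, you must either prove the weaker kernel-bound version and then adjust its use downstream, or justify comparability of the fourth-order derivatives along $\gamma$ with their endpoint values by a continuity argument of the paper's type rather than by Theorem \ref{fundam}.
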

 \begin{proof} The proof follows from the mean value principle: 
 Set $L= d_{x_0}((\xi,s),(x_0,0))$. For every $\delta>0$ consider 
 a horizontal curve  $\gamma:[0,L+\delta]\to \Om\times (-1,1)$, parametrized by arc-length and joining
 $(\x,s)$ to $(x_0,0)$. For every $f\in C^1$ one has
 $$|f(\x,s)-f(x_0,0)|\le  \int_0^L |\frac{d}{dt} f(\gamma(t))| dt = \int_0^L|\langle \nabla f(\gamma(t)), \gamma'(t)\rangle |dt \le (L+\delta)  \sup_{\gamma} |\nabla_{x_0,\e} f|.$$
There exists neighborhood  $(x_0,0)\in U$ depending on the $C^1$ norm of $f$ for which  we have
 $|\nabla_{x_0,\e} f(\z,\s)| \le 2 ( |\nabla_{x_0,\e} f(x_0,0)|+ |\nabla_{x_0,\e} f(\x,s)|)$
 for all $(\z,\s)\in U$. The lemma now follows from choosing$$f=X_{k,x_0}X_{l,x_0}X_{i,x_0} \Gamma_{\e,x_0}(\cdot, (\z,\s))$$ and  observing that the smoothness of $\Gamma_{\e,x_0}$ depends on $\e>0$.
 \end{proof}
 \begin{lemma}\label{repder}Let   $z$ be a smooth  solution of $N_{\e, u}z = 0.$  For any $s\in (-1,1)$ and $x_0\in \Om$, one can represent the 'frozen' second order  horizontal
derivatives of $z$ at $x_0$ as
\begin {equation}\label{prima2}
  \begin {split}
X_{k,x_0} & X_{ l,x_0} (z (x_0) \phi(x_0,s))  =\\
\!  & =   \int\limits_{\O\times(-1,1)} X_{k,x_0} X_{l,x_0} (x_0,s) \G_{\e, x_{0}}(\cdot,(\z,\sigma))
   \left( z \,N_{\e x_0}\phi + \sum_{ij=1}^{2}\bar a_{ij}(x_0)
   \Big(X_{i,z_0 }z X_{j,z_0 }\phi + X_{j,z_0 }z X_{i,z_0}\phi\Big)\right) d\z\\
   & +  \sum_{ij=1}^{2} \int\limits_{\O\times(-1,1)} X_{k,x_0} X_{l,x_0} (x_0,s) \G_{\e, x_{0}}(\cdot,(\z,\sigma))
\Big(\bar a_{ij}(x_0)- \bar a_{ij}(\z)\Big)X_{i, u } X_{j, u }z(\z)
   \, \phi(\z,\s)  d\z d\sigma
\\
   &
-   \sum_{j=1}^{2}\bar a_{1 j}(x_0)\int\limits_{\O\times(-1,1)}
 X_{k,x_0} X_{l,x_0} (x_0,s) \G_{\e, x_{0}}(\cdot,(\z,\sigma))\Big( u(\zeta)- P^1_{x_0} u(\zeta) -
\s^2\Big)\p_{2}X_{j,  u}z(\z)
   \, \phi(\z,\s)  d\z d\sigma
 \\
   &  + \sum_{i=1}^{2}\bar a_{i 1}(x_0)\int\limits_{\O\times(-1,1)}  X_{k,x_0} X_{l,x_0}   X_{i, x_0} (x_0,s)\G_{\e x_{0}}(\cdot,(\z,\sigma))
 \Big( u(\zeta)- P^1_{x_0} u (\z) - \s^2\Big)\p_{2} z(\z)
   \, \phi(\z,\s)  d\z d\sigma
 \\
   &  + \sum_{i=1}^{2}\bar a_{i 1}(x_0)\int\limits_{\O\times(-1,1)} X_{k,x_0} X_{l,x_0}  (x_0,s) \G_{\e, x_{0}}(\cdot,(\z,\sigma))
 \Big( u(\zeta)- P^1_{x_0} u (\z) - \s^2\Big)\p_{2} z(\z)
   \, X_{i,x_0}\phi(\z,s)  d\z d\sigma.
\end{split}\end{equation}
 \end{lemma}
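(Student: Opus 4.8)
The plan is to read off \eqref{prima2} by applying the frozen second order operator $X_{k,x_0}X_{l,x_0}$, in the variable $\xi$ and then evaluated at $\xi=x_0$, to the representation formula \eqref{prima} of Proposition \ref{p402r}. Since $z$ solves $N_{\e,u}z=0$ we take $g\equiv 0$ in \eqref{prima}, so the integral carrying the factor $g(\zeta)\phi(\zeta,\sigma)$ drops out and six integrals remain. On the left-hand side $z(\xi)\phi(\xi,s)$ becomes $X_{k,x_0}X_{l,x_0}\bigl(z(x_0)\phi(x_0,s)\bigr)$; on the right-hand side the variable $\xi$ enters only through the kernels $\Gamma_{\e,x_0}((\xi,s),\cdot)$ and $X_{i,x_0}(\xi,s)\Gamma_{\e,x_0}(\cdot,(\zeta,\sigma))$, so $X_{k,x_0}X_{l,x_0}$ acts on those alone, producing $X_{k,x_0}X_{l,x_0}(x_0,s)\Gamma_{\e,x_0}(\cdot,(\zeta,\sigma))$ in five terms and $X_{k,x_0}X_{l,x_0}X_{i,x_0}(x_0,s)\Gamma_{\e,x_0}(\cdot,(\zeta,\sigma))$ in the one term already carrying a first derivative of the kernel. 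This is precisely the list displayed in \eqref{prima2}, so the entire content of the lemma is the legitimacy of differentiating twice under the integral sign.

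For each fixed $\e>0$ the operator $N_{\e,x_0}$ in \eqref{nondivfroz} is uniformly elliptic with smooth coefficients, so $\Gamma_{\e,x_0}$ is $C^\infty$ off the diagonal and the only issue is the local behaviour of the new kernels near the diagonal. In the four integrals whose integrand already carries a factor vanishing at $\zeta=x_0$ — the two containing $\bar a_{ij}(x_0)-\bar a_{ij}(\zeta)$ and the two containing $u(\zeta)-P^1_{x_0}u(\zeta)-\sigma^2$ — one bounds those factors by $C\,d_{x_0,\e}^{\al}(\zeta,x_0)$ using the H\"older continuity of $\nabla_\e u$, and by $C\,d_{x_0,\e}^{1+\al}(\zeta,x_0)$ using Lemma \ref{d1alpha} together with the fact that $\sigma$ is a variable of homogeneous degree one, so that $\sigma^2$ has order $2>1+\al$. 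Combined with the estimates of Theorem \ref{fundam}, applied with $|I|=2$ in the terms where only the two new derivatives reach the kernel and with $|I|=3$ in the term originating from $X_{i,x_0}(\xi,s)\Gamma_{\e,x_0}$, these bounds show that in each of the four integrals the new kernel is dominated near the diagonal by a kernel of the form $C\,d_{x_0,\e}^{\beta}(\zeta,x_0)\,\big|B_\e((\zeta,\sigma),d_{x_0,\e}(\zeta,x_0))\big|^{-1}$ with $\beta\ge\al$, hence, using the volume lower bound $|B_\e(\cdot,r)|\ge c\,r^5$ of \eqref{stabilityofvolume}, by an integrable $d_{x_0,\e}^{\al-5}$-type singularity, with constants uniform in $\e$. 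Thus differentiation under the integral is justified there, and since the resulting integrals are continuous in $\xi$, evaluation at $\xi=x_0$ is harmless.

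There remains the first integral of \eqref{prima}, tested against the smooth, compactly supported density $z\,N_{\e x_0}\phi+\sum_{ij}\bar a_{ij}(x_0)\bigl(X_{i,x_0}zX_{j,x_0}\phi+X_{j,x_0}zX_{i,x_0}\phi\bigr)$, whose integrand has no vanishing factor at the pole; here the two extra derivatives on $\Gamma_{\e,x_0}$ yield only a borderline ($\log$-type) singularity, so the term is a genuine second order singular potential for $N_{\e,x_0}$. This is the step requiring care: one excises a small $d_{x_0,\e}$-ball about the pole, differentiates the exterior contribution directly, and on the interior piece integrates by parts so that the two derivatives land on $\Gamma_{\e,x_0}$; the cancellation of the Calder\'on--Zygmund kernel $X_{k,x_0}X_{l,x_0}\Gamma_{\e,x_0}$ — whose average over $d_{x_0,\e}$-annuli is bounded by Theorem \ref{fundam} — against the modulus of continuity of the smooth density then shows that, for $\e$ fixed, the excised integrals converge to the principal-value singular integral recorded in \eqref{prima2}. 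The bookkeeping for the remaining five terms is routine; I expect this potential-type term, namely the legitimate transfer of the two remaining derivatives onto the fundamental solution, to be the only genuine obstacle in the proof.
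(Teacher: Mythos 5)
There is a genuine gap, and it sits exactly where you declare the problem easy. For the four integrals carrying the factors $\bar a_{ij}(x_0)-\bar a_{ij}(\z)$ and $u(\z)-P^1_{x_0}u(\z)-\s^2$, your justification of differentiation under the integral sign by an integrable dominating kernel does not work, because the compensating factor vanishes at the \emph{frozen} center $(x_0,0)$ while the singularity of $\G_{\e,x_0}((\x,s),(\z,\s))$ sits at the \emph{moving} point $(\z,\s)=(\x,s)$. These two points coincide only when $\x=x_0$. For $\x\neq x_0$ the formally differentiated integrand in the worst term behaves near its pole like $|X_{k,x_0}X_{l,x_0}X_{i,x_0}\G_{\e,x_0}|\sim d^{-1}_{x_0,\e}\,|B_\e|^{-1}\sim d^{-6}_{x_0,\e}$ multiplied by a factor which is essentially the nonzero constant $d^{1+\al}_{x_0,\e}(\x,x_0)$, hence is not locally integrable (and the terms with only two derivatives on the kernel are borderline divergent as well). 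So there is no dominating function uniform in $\x$ near $x_0$, the ``resulting integrals'' are not even defined for $\x\neq x_0$, and the final step ``evaluation at $\x=x_0$ is harmless by continuity'' has nothing to be evaluated. The identity \eqref{prima2} is intrinsically a statement about second differentiability \emph{at the single point} $x_0$, where the cancellation $d^{1+\al}_{x_0,\e}(\z,x_0)$ against the kernel singularity is available, and it cannot be reached by a classical differentiation-under-the-integral theorem on a neighborhood.

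This is precisely why the paper's proof is structured differently: it isolates the term \eqref{I-10/3} with the third derivative on $\G_{\e,x_0}$ as the most singular one, regularizes it with cutoffs $\chi_{x_0,\e}$ supported away from the diagonal at scale $\e$, proves the two quantitative rates \eqref{hypothesis-3.2}, namely $|I_\e-I|\le C\e^{2+\al}$ and $|X_{k,x_0}X_{l,x_0}I_\e-I^{(2)}|\le C\e^{\al}$ on a $d_{x_0,\e}$-ball of radius comparable to $\e$ (using Lemma \ref{L-comparing}, Lemma \ref{d1alpha} and Theorem \ref{fundam}), and then invokes the differentiability criterion of \cite[Proposition 3.2]{CPP} to conclude that the second derivative at $x_0$ exists and equals $I^{(2)}(x_0)$. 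Your proposal also inverts the hierarchy of difficulty: the first integral, with smooth compactly supported density built from $\phi$ and its derivatives, is the routine one (in applications $\phi\equiv 1$ near $x_0$, and in any case it is handled as in \cite{CPP}); your principal-value excision there is a reasonable classical device, but it does not supply the missing quantitative approximation argument for the terms whose compensation is anchored at $x_0$, which is the actual content of the lemma.
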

 \begin{proof}
 Since the proof is similar to that of \cite[Proposition 3.9]{CPP}, we only sketch the argument for the {\it most singular} term in the representation formula, i.e.
 \begin{equation}\label{I-10/3}
I((\x,s), (x_0,0))= \sum_{i=1}^{2}\bar a_{i 1}(x_0)\int\limits_{\O\times(-1,1)}  X_{i, x_0}(\x,s)\G_{\e x_{0}}(\cdot,(\z,\sigma))
 \Big( u(\zeta)- P^1_{x_0} u (\z) - \s^2\Big)\p_{2} z(\z)
   \, \phi(\z,s)  d\z d\sigma.
 \end{equation}
 We want to show that 
 \begin{equation}\label{concl-10/3}
 X_{k,x_0} X_{l,x_0} (x_0,0) I (\cdot, (x_0,0))=I^{(2)},
 \end{equation}
  where
 \begin{equation}\label{pre10/3}
I^{(2)}(x_0)=\sum_{i=1}^{2}\bar a_{i 1}(x_0)\int\limits_{\O\times(-1,1)}  X_{k,x_0} X_{l,x_0} (x_0,s) X_{i, x_0}\G_{\e x_{0}}(\cdot,(\z,\sigma))
 \Big( u(\zeta)- P^1_{x_0} u (\z) - \s^2\Big)\p_{2} z(\z)
   \, \phi(\z,s)  d\z d\sigma 
   \end{equation}
   Note that the latter is well defined in view of the estimates \eqref{1alpha1} and \eqref{2}.
   To show \eqref{concl-10/3} we consider a family of smooth test functions $\chi_{x_0,\e}(
   (\xi,s), (\z,\sigma))$, satisfying  for some choice of $C>0$ and for small $\e>0$, 
   (i) $0\le \chi_{x_0,\e}(
   (\xi,s), (\z,\sigma)) \le 1 $, (ii) $\chi_{x_0,\e}((
   (\xi,s), (\z,\sigma))=0$ if $d_{x_0,\e}((\xi,s), (\z,\sigma)) \le 2 C\e$, (iii)
    $\chi_{x_0,\e}(
   (\xi,s), (\z,\sigma))=1$ if $d_{x_0,\e}((\xi,s), (\z,\sigma)) \ge 4 C\e$, (iv)
   $|\nabla_{\e, x_0}^I \chi_{x_0,\e}((
   (\xi,s), (\z,\sigma))| \le C \e^{-|I|}$ for all multi-indeces $I$,
   for some choice of $C>0$ and for small $\e>0$. For the existence
   of such function see \cite{CGL} and note that the construction argument in that paper uses 
   only the estimates on the fundamental solution. Define the smooth approximation
   \begin{multline}I_\e((\x,s), (x_0,0))= \sum_{i=1}^{2}\bar a_{i 1}(x_0)\int\limits_{\O\times(-1,1)}  X_{i, x_0} (\x,s)\G_{\e x_{0}}(\cdot,(\z,\sigma))
  \\ \cdot \ \Big( u(\zeta)- P^1_{x_0} u (\z) - \s^2\Big) \ \chi_{x_0,\e}(
   (\xi,s), (\z,\sigma)) \ \p_{2} z(\z)
   \, \phi(\z,s)  d\z d\sigma.\end{multline}
  Once we establish the bounds
   \begin{align}\label{hypothesis-3.2}
   \bigg| I_\e ((\x,s), (x_0,0)) - I ((\x,s), (x_0,0))\bigg| & \le C \e^{2+\al} \notag \\
   \bigg|  X_{k,x_0} X_{l,x_0} (\xi,s)  I_\e (\cdot, (x_0,0)) - I^{(2)}(x_0)\bigg| & \le C \e^{\al},
   \end{align}
   for\footnote{In view of the dependence of the neighborhood $U$ from $\e$ in Lemma \ref{L-comparing}, possibly one needs to work with a smaller scale $d_{x_0,\e}((\xi,s), (x_0,0) \le o( \e)$ depending on the $C^3$ norm of $\Gamma_{\e,x_0}$} $d_{x_0,\e}((\xi,s), (x_0,0) \le \e$, then invoking \cite[Proposition 3.2]{CPP} we
 immediately   conclude \eqref{concl-10/3}.
 To prove the first estimate in \eqref{hypothesis-3.2} we first observe that for $\e>0$ sufficiently small
\begin{multline}
  \bigg| I_\e ((\x,s), (x_0,0)) - I ((\x,s), (x_0,0))\bigg| \\ \le
  C\int_{d_{x_0,\e}((\z,\s), (\x,s) \le 4C\e}  \bigg(1-\chi_{x_0,\e}(
   (\xi,s), (\z,\sigma)) \bigg) \frac{d_{x_0,\e }( (\x,s),(\zeta,\sigma))}{|B_{\e} (\x, d_{x_0,\e}
( (\x,s),(\zeta,\sigma))|} \cdot  \\
\cdot d_{x_0,\e }((\zeta,\sigma), (x_0,0))^{1+\al} | \p_{2} z(\z)
   \, \phi(\z,s)|  d\z d\sigma.
\end{multline}
Since $$d_{x_0,\e }((\zeta,\sigma), (x_0,0)) \le C( d_{x_0,\e }((\zeta,\sigma),(\x,s)) +
d_{x_0,\e }((x_0,0), (\x,s))\le C(  d_{x_0,\e }((\zeta,\sigma),(\x,s)) +\e),$$ then we conclude
$$\bigg| I_\e ((\x,s), (x_0,0)) - I ((\x,s), (x_0,0))\bigg| \le C \int_{d_{x_0,\e}((\z,\s), (\x,s) \le 4C\e}  \frac{d_{x_0,\e }( (\x,s),(\zeta,\sigma))^{2+\al}}{|B_{\e} (\x, d_{x_0,\e}
( (\x,s),(\zeta,\sigma))|} \le C \e^{2+\al}.$$
Next we turn to the second  estimate in \eqref{hypothesis-3.2}. Observe that
$$\bigg|  X_{k,x_0} X_{l,x_0} (\xi)  I_\e ((\x,s), (x_0,0)) - I^{(2)}(x_0)\bigg| \le |A_1|+|A_2|+|A_3|,$$
where
\begin{multline}
A_1=\int\limits_{\O\times(-1,1)} \Bigg| \bigg( X_{k,x_0} X_{l,x_0} (\xi,s)-  X_{k,x_0} X_{l,x_0} (x_0,s) \bigg) \nabla_{x_0,\e} \G(\cdot, (\z,\sigma)) \cdot\\ \cdot
 \Big( u(\zeta)- P^1_{x_0} u (\z) - \s^2\Big) \ \chi_{x_0,\e}(
   (\xi,s), (\z,\sigma)) \ \p_{2} z(\z)
   \, \phi(\z,\s)  \Bigg| d\z d\sigma,
   \end{multline}
   \begin{multline}
A_2=\int\limits_{\O\times(-1,1)} \Bigg| X_{k,x_0} X_{l,x_0} (x_0,s)  \nabla_{x_0,\e} \G(\cdot, (\z,\sigma)) \cdot \\ \cdot
 \Big( u(\zeta)- P^1_{x_0} u (\z) - \s^2\Big) \ \bigg(1-\chi_{x_0,\e}(
   (\xi,s), (\z,\sigma)) \bigg)\ \p_{2} z(\z)
   \, \phi(\z,\s)  \Bigg| d\z d\sigma,
   \end{multline}
   and
   \begin{multline}
A_3=\int\limits_{\O\times(-1,1)} \Bigg|  X_{l,x_0}  \nabla_{x_0,\e}(\xi,s) \G(\cdot, (\z,\sigma))
\cdot \\ \cdot
 \Big( u(\zeta)- P^1_{x_0} u (\z) - \s^2\Big) \ X_{k,x_0}(\xi) \chi_{x_0,\e}(
   (\xi,s), (\z,\sigma)) \ \p_{2} z(\z)
   \, \phi(\z,\s)  \Bigg| d\z d\sigma
   \\
   + \int\limits_{\O\times(-1,1)} \Bigg|   \nabla_{x_0,\e} (\xi,s) \G(\cdot, (\z,\sigma))
\cdot \\ \cdot
 \Big( u(\zeta)- P^1_{x_0} u (\z) - \s^2\Big) \  X_{l,x_0} X_{k,x_0}(\xi,s) \chi_{x_0,\e}(
  \cdot, (\z,\sigma)) \ \p_{2} z(\z)
   \, \phi(\z,\s)  \Bigg| d\z d\sigma
   \end{multline}
  Invoking Lemma \ref{L-comparing}   one can complete the proof arguing as in \cite[page 734]{CPP}.  As usual, we examine in detail only the integral $A_1$  which
  contains the most singular integrand.
  Note that if $\z\in $supp$ \chi_{x_0,\e}(
   (\xi,s),\cdot) $ then $d_{x_0,\e}((\xi,s), (\z,\sigma)) \ge 2 C\e$, consequently
   $$d_{x_0} ((x_0,0), (\z,\sigma)) \ge \frac{1}{C} d_{x_0,\e}((\xi,s), (\z,\sigma))
   -d_{x_0,\e}((\xi,s), (x_0,0))\ge \e.$$ In view of Lemma  \ref{L-comparing}, \eqref{fundam}
   and Lemma \ref{d1alpha} we have 
    \begin{multline}
   |A_1| \le C \int_{d_{x_0} ((x_0,0), (\z,\sigma))\ge \e}   \!\!\!\!\!\!\!\!\!\!\!\!\!\!\!\!\!
   d_{x_0}((\xi,s),(x_0,0))  d_{x_0}^{1+\al}((x_0,0),(\z,\s))
 \phi(\z)  \frac{d_{x_0}((x_0,0),(\z,\s))^{-2}}{|B((x_0,0), d_{x_0}((x_0,0),(\z,\s)))|}
 d\z d\s \\
+ C \int_{d_{x_0} ((\x,s),(\z,\s))\ge \e}   \!\!\!\!\!\!\!\!\!\!\!\!\!\!\!\!\!
   d_{x_0}((\xi,s),(x_0,0))  d_{x_0}^{1+\al}((x_0,0),(\z,\s))
 \phi(\z)
 \frac{d_{x_0}((\x,s),(\z,\s))^{-2}}{|B((\x,s), d_{x_0}((\x,s),(\z,\s)))|}
   d\z d\s  \le C \e^{\al}.
   \end{multline}
 \end{proof}
 
Using the representation formula above,  the fractional integration result in
 Proposition \ref{singularintegral}
  and
 Corollary \ref{corollarioscemo} we  finally can proceed to the proof of  the main result of the section:

\begin{proof}[Proof of Theorem \ref{p402}] We will prove $(i)$ only. The proof of $(ii)$ follows along a similar argument.
Using the representation formula \eqref{prima2} one can represent the second horizontal
derivatives $ X_{k,x_0} X_{ l,x_0} (z(x_0)\phi(x_0,s)  $  of $z$ at any point $(x_0,s)\in \Om\times (-1,1)$ through integral operator with  kernels of the form 
$$ X_{k,x_0} X_{l,x_0} X_{i, x_0}(x_0,s)\G_{\e x_{0}}(\cdot,(\z,\sigma))
 \Big( u(\zeta)- P^1_{x_0} u (\z) - \s^2\Big)$$
 $$X_{k,x_0} X_{l,x_0}(x_0,s) \G_{\e, x_{0}}(\cdot,(\z,\sigma))
\Big(\bar a_{ij}(x_0)- \bar a_{ij}(\z)\Big)$$
and 
$$ X_{k,x_0} X_{l,x_0} (x_0,s) \G_{\e, x_{0}}(\cdot,(\z,\sigma))
 \Big( u(\zeta)- P^1_{x_0} u (\z) - \s^2\Big)$$
%

To establish the non-singular character of such kernels one needs to invoke
the  estimates on  the derivatives of the fundamental solution of the frozen operator
 $\G_{\e x_{0}}$   in Theorem \ref{fundam}. To  estimate the $L^{10/3}$ norm of each term
in the right-hand side of \eqref{prima2} one  uses  the fractional integral 
estimates in Corollary \ref{corollarioscemo}. The 'worst' possible term is the one corresponding to three derivatives on $\G$, i.e.
\begin{multline}\label{10/3}
\sum_{i=1}^{2}\bar a_{i 1}(x_0)\int\limits_{\O\times(-1,1)}  X_{k,x_0} X_{l,x_0} X_{i, x_0}(x_0,s)\G_{\e x_{0}}(\cdot,(\z,\sigma))
 \Big( u(\zeta)- P^1_{x_0} u (\z) - \s^2\Big)\p_{2} z(\z)
   \, \phi(\z)  d\z d\sigma \\
   =\int \mathcal K( (x_0,s), (\z,\sigma))\p_{2} z(\z)
   \, \phi(\z)  d\z d\sigma, \end{multline}
   with (in view of Remark \ref{notabene})
   $$|\mathcal K( (x_0,s), (\z,\sigma))| \le C \frac{d_{x_0,\e}^{{\alpha}}((x_0,s), (\z,\sigma))}{|B_\e((x,s), d_{x_0,\e}((x_0,s),( \z,\sigma))|} .$$
  Note that  the expression in \eqref{10/3} does not depend on $s$. Moreover since 
   from the assumptions one has $p>10/3>$, and hence $p>50/(15+ 10 \alpha)$ then Corollary \ref{corollarioscemo} yields immediately that the integral in \eqref{10/3} is in $L^r$ with
   $$r=\frac{5p}{5-p\alpha} >\frac{10}{3}.$$
   The rest of the terms in the right-hand side of  \eqref{prima2} are estimated similarly.
   
 At this point we have proved that the function
   $x_0\to X_{k,x_0} X_{l,x_0} z (x_0)$ is in $  L^{\frac{10}{3}}(K).$
 Next  we observe that 
   $$X_{k,u} X_{l,u} (x_0) z - X_{k,x_0} X_{l,x_0} (x_0,0) z =\begin{cases}
 0 & \text{ if } l=2 \text{ and }k=1,2 \\
 u (x_0) \p_{x_2}u(x_0) \p_{x_2}  z (x_0) &\text{ if } l=1 \text{ and } k=1\\
0 &\text{ if } l=1 \text{ and } k=2
 \end{cases}.$$
 In view of the hypothesis $\p_{x_2}z\in L^p(K)$ then one finally concludes
 $$X_{k,u} X_{l,u}  z - X_{k,x_0} X_{l,x_0} (\cdot,0) z \in L^p(K)$$
 with $p>10/3$.
\end{proof}

\section{Cacciopoli type inequalities: $W^{m,p}_{\e,\loc}$ a priori estimates.}

In this section we prove a {\it a priori} estimate, in the
Sobolev spaces $W^{m,p}_{\e, \loc}(\Omega)$ for solutions $z$  of
equation (\ref{eq111}), under the assumption that
$u$ is  smooth and satisfies \eqref{MM}.

The starting point is a  Caccioppoli type inequality for
derivatives of solution of  (\ref{eq111}) in the directions $X_i$.
By Lemmas \ref{lem4} and \ref{lem6} these derivatives solve the same equation 
\eqref{eq111}, with a different second member, hence we will focus on this PDE.

\begin{lemma}\label{lem1} Assume that $f_0\in
L^1_{loc}(\Omega)$, and that $z\in W^{2,2}_{\e, loc}(\Omega)\cap
W^{1,3}_{1, loc}(\Omega)$ is a solution of equation (\ref{eq111})
For every $p\geq 3$ there exist
constants $C_1=C_1(p,M) $  with the constant $M$ as in
(\ref{MM}) and independent of $\e$ and $z$ such that for every
non-negative $\phi \in C^\infty_0(\Omega),$
 we have $$ \int |\nabla_\e(|z|^{(p-1)/2})|^2
\phi^2 \leq  C_1 \left(\int |z|^{p-1} \big( \phi^2 + |\nabla_\e
\phi|^2\big)  - \int f |z|^{p-3}z \phi^2\right).
$$

\end{lemma}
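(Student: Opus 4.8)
The plan is to test the equation $M_\e z = f$ against $\psi = |z|^{p-3}z\,\phi^2$, integrate by parts using the adjoint formulas $X_1^* = -X_1 - \p_2 u$ and $X_2^* = -X_2$, and absorb the resulting lower-order terms into the left-hand side by uniform ellipticity.

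First I would record the structural facts. Set $b_{ij} := a_{ij}(\nabla_\e u)/\sqrt{1+|\nabla_\e u|^2}$, so that $M_\e z = X_i(b_{ij}X_j z)$. By \eqref{aij} the matrix $a_{ij}(p)$ is symmetric with eigenvalues in $[(1+|p|^2)^{-1},1]$, hence, since $|\nabla_\e u|\le M$ by \eqref{MM}, the matrix $(b_{ij})$ satisfies $\lambda(M)|\xi|^2 \le b_{ij}\xi_i\xi_j \le |\xi|^2$ with $\lambda(M)>0$ depending only on $M$; in particular the ellipticity constants are independent of $\e$, which is the source of the $\e$-uniformity of $C_1$. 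I would also use the elementary pointwise identities $X_i(|z|^{p-3}z) = (p-2)|z|^{p-3}X_i z$ and $|\nabla_\e(|z|^{(p-1)/2})|^2 = \tfrac{(p-1)^2}{4}\,|z|^{p-3}|\nabla_\e z|^2$, both free of singularities because $p\ge 3$.

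Testing $M_\e z = f$ against $\psi$ and integrating by parts (legitimate under the assumed $W^{2,2}_{\e,\loc}\cap W^{1,3}_{1,\loc}$ regularity, after approximating $|z|^{p-3}z$ by bounded Lipschitz functions if one wishes to stay at this regularity level) produces
\[
(p-2)\int b_{ij}X_iz\,X_jz\,|z|^{p-3}\phi^2 = -\int f\,|z|^{p-3}z\,\phi^2 - 2\int b_{ij}X_jz\,|z|^{p-3}z\,\phi\,X_i\phi - \int (\p_2 u)\,b_{1j}X_jz\,|z|^{p-3}z\,\phi^2 ,
\]
the last integral coming from the zero-order part of $X_1^*$. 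In the two rightmost integrals I would write $|z|^{p-2}=|z|^{(p-3)/2}|z|^{(p-1)/2}$ and apply Young's inequality with a small parameter $\delta$, estimating each by $\delta\int|\nabla_\e z|^2|z|^{p-3}\phi^2 + C_\delta\int|z|^{p-1}(\phi^2+|\nabla_\e\phi|^2)$, where $\|\p_2 u\|_{L^\infty}\le M$ is used for the last term. Since $\int|\nabla_\e z|^2|z|^{p-3}\phi^2\le \lambda(M)^{-1}\int b_{ij}X_iz\,X_jz\,|z|^{p-3}\phi^2$, choosing $\delta$ small depending only on $p$ and $M$ absorbs the $\delta$-terms; bounding $|\nabla_\e(|z|^{(p-1)/2})|^2\phi^2\le \tfrac{(p-1)^2}{4\lambda(M)}\,b_{ij}X_iz\,X_jz\,|z|^{p-3}\phi^2$ and integrating then yields the stated inequality.

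There is no deep obstacle here; the argument is the standard Caccioppoli test-function scheme. The two points that require care are (i) the justification of $\psi$ as an admissible test function under the stated regularity — handled by truncating $z$ at level $N$, running the computation for the truncation, where every integral is finite, and letting $N\to\infty$ by monotone/dominated convergence, the inequality being vacuous when its right-hand side is infinite — and, more importantly, (ii) verifying that no constant produced along the way secretly depends on $\e$: every constant must be traceable to $p$, to the ellipticity constant $\lambda(M)$, and to $\|\p_2 u\|_{L^\infty}\le M$, all of which are $\e$-independent by \eqref{MM}.
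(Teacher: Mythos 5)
Your proposal is correct and follows essentially the same route as the paper: testing the equation against $|z|^{p-3}z\,\phi^2$, integrating by parts with $X_1^*=-X_1-\p_2u$, $X_2^*=-X_2$, and absorbing the zero-order and cut-off terms via Young's inequality and the $\e$-uniform ellipticity coming from $|\nabla_\e u|\le M$. Your added remarks on the admissibility of the test function (truncation) and on tracking the $\e$-independence of the constants are sound refinements of the same argument.
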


\begin{proof} Let us multiply both members of equation (\ref{eq111})
by $|z|^{p-3}z \phi^2$, and integrate. We obtain
\begin{equation} \label{s}
\int f  |z|^{p-3}z \phi^2 = \int
X_{i}\Big(\frac{a_{ij}(\nabla_\e u)}{\sqrt{1 + |\nabla_\e
u|^2}}X_jz  \Big) |z|^{p-3}z \phi^2 =\end{equation}(since $X_1^* =
-X_1 -\p_2u $, $X_2^* = - X_2$)
 $$ = - \int \p_2u \delta_{i1}\frac{a_{ij}(\nabla_\e u)}{\sqrt{1 +
 |\nabla_\e
u|^2}}X_jz  |z|^{p-3}z \phi^2 -$$$$ - (p-2) \int
\frac{a_{ij}(\nabla_\e u)}{\sqrt{1 + |\nabla_\e u|^2}}X_jz X_iz
|z|^{p-3} \phi^2 - 2 \int \frac{a_{ij}(\nabla_\e u)}{\sqrt{1 +
|\nabla_\e u|^2}}X_jz |z|^{p-3}z  \phi X_i \phi.
$$
This obviously
implies that there exists a constant $C>0$ such that

\begin{equation}\label{eq40}
\frac{4(p-2)}{(p-1)^2} \int |\nabla_\e (|z|^{(p-1)/2})|^2 \phi^2
\leq C\int |z|^{p-1}(|\nabla_\e\phi|^2+ \phi^2)  - \int f
|z|^{p-3}z \phi^2,
\end{equation}
concluding the proof.
\end{proof}

\bigskip

\begin{lemma}\label{lemmalteo1}
Let $p\geq 3$ be fixed and $u$ be a function satisfying the bound \eqref{MMMM= }. Let $f \in
C^\infty(\Omega)$, and  let $z$ be a smooth solution of equation
(\ref{eq111}). There exist two constants $C$ and $\tilde C$
which depend on $p$ and the constant $M$ in (\ref{MM}) but are
independent of $\e$ and $z$ such for every $\phi\in
C^\infty_0(\Omega),$ $\phi>0,$
\begin{equation}\int |\nabla_\e ( |\nabla_\e z| ^{(p-1)/2})|^2
\phi^{2p}\leq C \int \big(|\nabla_\e\phi|^{2}+\phi^2\big)^p
+\end{equation}$$+
 \int |\nabla_\e z|^{p+1/2} \phi^{2p} 
+ \int  |X_2( \p_2 u)|^p  \phi^{2p} + \int |f| ^{(2p+1)/7} \phi^{2p} + \int |f| ^{(2p+1)/5}(|\nabla_\e \phi| + \phi)^{(2p+1)/5} \phi^{(8p-1)/5}  +$$
$$+ \int|\nabla^2_\e u| |\nabla_\e z|^{p-1} \phi^{2p} 
+ \int|\nabla^2_\e u|^2 |\nabla_\e z|^{p-1} \phi^{2p} + 
\int|\nabla^2_\e u| |\nabla_\e z|^{p-1} \phi^{2p-1}|\nabla_\e \phi| .$$

\end{lemma}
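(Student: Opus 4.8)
The plan is to run a Caccioppoli argument on the equations solved by the intrinsic derivatives $s_i := X_i z$, $i=1,2$. By Lemmas \ref{lem4} and \ref{lem6} each $s_i$ solves $M_\e s_i = g_i$, where $g_i$ is the explicit right-hand side displayed there: a term $X_i f$, two divergence-form terms whose coefficients carry one derivative of $A_{kl}:=a_{kl}(\nabla_\e u)/\sqrt{1+|\nabla_\e u|^2}$, and a lower order term with a $\p_2 u$ factor. Writing $w:=|\nabla_\e z|^2$, I would test $M_\e s_i=g_i$ against $|\nabla_\e z|^{p-3}s_i\phi^{2p}$ and sum over $i$; since $|\nabla_\e z|^{p-3}$ is only H\"older continuous when $3<p<4$, I would first replace $|\nabla_\e z|^{p-3}$ by $(w+\tau)^{(p-3)/2}$ and let $\tau\to 0^+$ at the end (nothing is needed when $p=3$).

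Integrating by parts with $X_1^* = -X_1-\p_2 u$ and $X_2^* = -X_2$, the principal contribution on the left is $\sum_i\int A_{kl}(X_l s_i)(X_k s_i)|\nabla_\e z|^{p-3}\phi^{2p}$; because $|\nabla_\e u|\le M$, the matrix $(A_{kl})$ is bounded and coercive \emph{uniformly in} $\e$, so this is $\ge c\int|\nabla_\e^2 z|^2|\nabla_\e z|^{p-3}\phi^{2p}$, which by Cauchy--Schwarz dominates $c'\int|\nabla_\e(|\nabla_\e z|^{(p-1)/2})|^2\phi^{2p}$. The key algebraic point, exactly as in \cite[Prop. 4.2]{CLM}, is that the terms generated when $X_k$ falls on the weight $|\nabla_\e z|^{p-3}$ reassemble, after summation in $i$, into $\tfrac{p-3}{4}\int A_{kl}(X_k w)(X_l w)\,w^{(p-5)/2}\phi^{2p}$, which is $\ge 0$ since $(A_{kl})$ is positive semidefinite and $p\ge 3$; hence it can be discarded. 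What is left on the right are: (a) the cutoff terms $\int A_{kl}(X_l s_i)s_i|\nabla_\e z|^{p-3}\phi^{2p-1}X_k\phi$; (b) the correction $\int \p_2 u\,A_{1l}(X_l s_i)s_i|\nabla_\e z|^{p-3}\phi^{2p}$; and (c) the source integrals $\sum_i\int g_i\,s_i|\nabla_\e z|^{p-3}\phi^{2p}$.

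For (c) I would integrate by parts once more in the two divergence-form pieces of $g_i$ and in the term $X_i f$, throwing the outer $X_i$ back onto $s_i|\nabla_\e z|^{p-3}\phi^{2p}$; this replaces $\nabla_\e f$ by $f$ (so only $|f|$, never $|\nabla_\e f|$, ever appears), at the cost of a further $\p_2 u$ factor and of terms with $X_k\phi$. After this every integrand in (a), (b), (c) is a monomial in $|f|$, $|\nabla^2_\e u|$, $|X_2(\p_2 u)|$, $|\nabla_\e^2 z|$, $|\nabla_\e z|$, $\phi$ and $|\nabla_\e\phi|$ --- here one uses that the derivative of $A_{kl}$ is $\le C(M)|\nabla^2_\e u|$ for all components except one, which reduces to $X_2(\p_2 u)=\e\,\p_2^2 u$ and is \emph{not} controlled by $M$, whence the independent term $\int|X_2(\p_2 u)|^p\phi^{2p}$. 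Applying H\"older and then Young's inequality to each monomial --- splitting off a factor $|\nabla_\e^2 z|\,|\nabla_\e z|^{(p-3)/2}\phi^p$ (whose square is the coercive density, to be absorbed), a factor built from $|\nabla_\e z|$, $\phi$ and $|\nabla_\e\phi|$ (whose relevant power produces $|\nabla_\e z|^{p+1/2}\phi^{2p}$ or $(|\nabla_\e\phi|^2+\phi^2)^p$), and the remaining factor (which produces the $|f|$-, $|\nabla^2_\e u|$- and $|X_2(\p_2 u)|$-terms) --- yields precisely the eight terms on the right together with $\delta$-multiples of $\int|\nabla_\e^2 z|^2|\nabla_\e z|^{p-3}\phi^{2p}$ and of $\int|\nabla_\e(|\nabla_\e z|^{(p-1)/2})|^2\phi^{2p}$; choosing $\delta$ small and reabsorbing these into the left-hand side finishes the argument. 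The elementary inequalities $\phi^a|\nabla_\e\phi|^b\le(|\nabla_\e\phi|^2+\phi^2)^{(a+b)/2}$ for $a+b=2p$ and $\int|\nabla_\e z|^{p-1}\phi^{2p}\le \eta\int|\nabla_\e z|^{p+1/2}\phi^{2p}+C_\eta\int\phi^{2p}$ are used to collect the cutoff and subcritical terms.

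The main obstacle is the bookkeeping, on two fronts. First, every constant must stay independent of $\e$: this rests on $|\nabla_\e u|\le M$ (uniform ellipticity of $(A_{kl})$), on $|\p_2 u|\le M$ (the commutator coefficients), and on the $\e$-independent sign of the reassembled cross term. Second, the many Young splittings must be calibrated so that the leftover $|\nabla_\e z|$-power is exactly $p+1/2$ rather than $p-1$, and the residual $|f|$-powers are exactly the prescribed fractional ones --- the half-integer exponent being forced by this matching, in the same way it appears in the interpolation Proposition \ref{3.3}. Carrying the borderline quantity $X_2(\p_2 u)$ along as its own term throughout, instead of trying to bound it by $|\nabla^2_\e u|$, is the one genuinely structural subtlety.
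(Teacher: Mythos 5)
Your argument is correct and lives in the same Caccioppoli framework as the paper's: both start from the differentiated equations of Lemmas \ref{lem4} and \ref{lem6}, test with a $p$-power weight times $\phi^{2p}$, integrate the right-hand side by parts once more (so that only $|f|$, never $\nabla_\e f$, survives), and close with H\"older--Young absorption, carrying $X_2(\p_2u)$ as its own term. The one genuine difference is the testing weight. The paper applies its Lemma \ref{lem1} to $s_1=X_1z$ and $s_2=X_2z$ separately, i.e.\ tests with $|s_i|^{p-3}s_i\phi^{2p}$, obtains \eqref{equi} and \eqref{equi2}, whose left-hand sides are $\int|\nabla_\e(|s_i|^{(p-1)/2})|^2\phi^{2p}$, and sums; you test both equations against the joint weight $|\nabla_\e z|^{p-3}s_i\phi^{2p}$ and sum in $i$, using the $\e$-uniform ellipticity of $a_{kl}(\nabla_\e u)$ and the sign of the reassembled term $\tfrac{p-3}{4}\int \tfrac{a_{kl}(\nabla_\e u)}{\sqrt{1+|\nabla_\e u|^2}}X_kw\,X_lw\,w^{(p-5)/2}\phi^{2p}$ with $w=|\nabla_\e z|^2$. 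Your variant yields $\int|\nabla_\e^2z|^2|\nabla_\e z|^{p-3}\phi^{2p}$, hence the stated quantity $\int|\nabla_\e(|\nabla_\e z|^{(p-1)/2})|^2\phi^{2p}$, directly, whereas the componentwise route controls $\sum_i\int|\nabla_\e(|s_i|^{(p-1)/2})|^2\phi^{2p}$, which does not dominate the joint expression pointwise, so the paper's closing step is slightly glossed; conversely, the paper's route reuses Lemma \ref{lem1} verbatim and avoids both the cross-term computation and your $\tau$-regularization of the weight for $3<p<4$. One bookkeeping slip in your narrative: the standalone term $\int|X_2(\p_2u)|^p\phi^{2p}$ does not arise from a derivative of the coefficients $a_{kl}(\nabla_\e u)/\sqrt{1+|\nabla_\e u|^2}$ (all such derivatives are bounded by $C(M)|\nabla^2_\e u|$), but from $X_2$ landing on the explicit $\p_2u$ prefactor of the term $\p_2u\,X_2\bigl(\tfrac{a_{2j}(\nabla_\e u)}{\sqrt{1+|\nabla_\e u|^2}}X_jz\bigr)$ in Lemma \ref{lem4} when it is integrated by parts; since you do carry this term correctly in the final estimate, this does not affect the validity of your proof.
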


\begin{proof} Since $z$ is a solution of equation (\ref{eq111})
then by Lemma \ref{lem4}
 $s_1 = X_1 z $ satisfies equation 
$$ M_\e s_1 = X_1(f ) + X_{i}\Bigg(\frac{a_{i2}(\nabla_\e u)}{\sqrt{1
+ |\nabla_\e u|^2}} \p_2 u X_2z \Bigg) -$$$$-
X_{i}\Bigg(X_1\Big(\frac{a_{ij}(\nabla_\e u)}{\sqrt{1 + |\nabla_\e
u|^2}}\Big) X_jz \Bigg)
 + \p_2 u  X_2\Bigg(\frac{a_{2j}(\nabla_\e u)}{\sqrt{1 +
|\nabla_\e u|^2}}X_jz \Bigg)$$

 Using Lemma
\ref{lem1} we deduce \begin{equation}\label{42}\int |\nabla_\e (
|s_1| ^{(p-1)/2})|^2 \phi^{2p}  \leq C_1\int
|s_1|^{p-1}\big(|\nabla_\e\phi|^{2}+\phi^2\big)\phi^{2p-2} -
\end{equation}
$$- \int  |s_{1}|^{p-3}s_1
X_{i}\Bigg(\frac{a_{i2}(\nabla_\e u)}{\sqrt{1 + |\nabla_\e u|^2}}
\p_2 u X_2z \Bigg)\phi^{2p} + $$$$+\int  |s_{1}|^{p-3}s_1
X_{i}\Bigg(X_1\Big(\frac{a_{ij}(\nabla_\e u)}{\sqrt{1 + |\nabla_\e
u|^2}}\Big) X_jz \Bigg)\phi^{2p} $$$$
 - \int  |s_{1}|^{p-3}s_1  \p_2 u  X_2\Bigg(\frac{a_{2j}(\nabla_\e u)}{\sqrt{1 +
|\nabla_\e u|^2}}X_jz \Bigg)\phi^{2p} - \int |s_{1}|^{p-3}s_1 X_1 f \phi^{2p}=$$ (integrating by part all
terms in the right hand side)
$$
=  C_1\int
|s_1|^{p-1}\big(|\nabla_\e\phi|^{2}+\phi^2\big)\phi^{2p-2} + \int
|s_{1}|^{p-3}s_1  \frac{a_{12}(\nabla_\e u)}{\sqrt{1 +
|\nabla_\e u|^2}} (\p_2 u)^2 X_2z \phi^{2p} + $$
$$+ (p-2) \int
|s_{1}|^{p-3} X_is_1\frac{a_{i2}(\nabla_\e u)}{\sqrt{1 +
|\nabla_\e u|^2}} \p_2 u X_2z \phi^{2p}+$$$$ + 2p \int
|s_{1}|^{p-3}s_1 \frac{a_{i2}(\nabla_\e u)}{\sqrt{1 + |\nabla_\e
u|^2}} \p_2 u X_2z  \phi^{2p-1}X_i\phi + $$

$$- \int
|s_{1}|^{p-3}s_1 \p_2 u  X_1\Big(\frac{a_{1j}(\nabla_\e
u)}{\sqrt{1 + |\nabla_\e u|^2}}\Big) X_jz  \phi^{2p} $$
$$- (p-2)\int
|s_{1}|^{p-3} X_{i}s_1  X_1\Big(\frac{a_{ij}(\nabla_\e u)}{\sqrt{1
+ |\nabla_\e u|^2}}\Big) X_jz  \phi^{2p} $$
$$- 2p\int
|s_{1}|^{p-3}s_1  X_1\Big(\frac{a_{ij}(\nabla_\e u)}{\sqrt{1 +
|\nabla_\e u|^2}}\Big) X_jz  \phi^{2p-1} X_i\phi $$

$$
+ (p-2) \int  |s_{1}|^{p-3}X_2 s_1  \p_2 u \frac{a_{2j}(\nabla_\e
u)}{\sqrt{1 + |\nabla_\e u|^2}}X_jz \phi^{2p}+$$
$$
+ \int  |s_{1}|^{p-3}s_1 X_2 \p_2 u  \frac{a_{2j}(\nabla_\e
u)}{\sqrt{1 + |\nabla_\e u|^2}}X_jz \phi^{2p} + $$
$$
+ 2p \int  |s_{1}|^{p-3}s_1  \p_2 u \frac{a_{2j}(\nabla_\e
u)}{\sqrt{1 + |\nabla_\e u|^2}}X_jz \phi^{2p-1} X_2\phi + 
$$$$+ \int |s_{1}|^{p-3}s_1 \p_2 u f \phi^{2p}
+ 2p\int |s_{1}|^{p-3}s_1 f   X_1\phi \phi^{2p-1}$$$$
+ (p-2)\int |s_{1}|^{p-3}X_1 s_1 \sign(s_1) f  \phi^{2p}
\leq$$

$$\leq C_1\int
|s_1|^{p-1}\big(|\nabla_\e\phi|^{2}+\phi^2\big)\phi^{2p-2} +
\frac{C}{\delta}\int|X_2z|^{p-1} \phi^{2p}+ \delta \int|\nabla_\e s_1|^2
|s_1|^{p-3} \phi^{2p}+$$
$$+ \int|\nabla^2_\e u| |\nabla_\e z|^{p-1} \phi^{2p} 
+ \frac{C}{\delta} \int|\nabla^2_\e u|^2 |\nabla_\e z|^{p-1} \phi^{2p} + 
\int|\nabla^2_\e u| |\nabla_\e z|^{p-1} \phi^{2p-1}|\nabla_\e \phi| +  $$$$+ C\int  |X_2( \p_2 u)|^p  \phi^{2p} +  \int |\nabla_\e z|^{p} \phi^{2p}+ \frac{C}{\delta}\int |s_1| ^{p+1/2} \phi^{2p} +$$$$+ \frac{C}{\delta} \int |f| ^{(2p+1)/7} \phi^{2p} + \int |f|^{(2p+1)/5}(|\nabla_\e \phi| + \phi)^{(2p+1)/5} \phi^{(8p-1)/5}  \leq $$

 $$ \leq C_1 \int
\big(|\nabla_\e\phi|^{2}+\phi^2\big)^p 
  +  \frac{C}{\delta} \int |\nabla_\e z|^{p} \phi^{2p}+  \frac{C}{\delta} \int |s_1| ^{p+1/2} \phi^{2p} +$$$$
+ \int  |X_2( \p_2 u)|^p  \phi^{2p} +\delta\int
|s_{1}|^{p-3}|\nabla_\e s_1|^2 \phi^{2p} +  $$
$$+ \int|\nabla^2_\e u| |\nabla_\e z|^{p-1} \phi^{2p} 
+  \frac{C}{\delta}\int|\nabla^2_\e u|^2 |\nabla_\e z|^{p-1} \phi^{2p} + 
\int|\nabla^2_\e u| |\nabla_\e z|^{p-1} \phi^{2p-1}|\nabla_\e \phi| + $$
$$+  \frac{C}{\delta}\int |f| ^{(2p+1)/7} \phi^{2p} + \int |f|^{(2p+1)/5}(|\nabla_\e \phi| + \phi)^{(2p+1)/5} \phi^{(8p-1)/5} $$
  It follows
that
\begin{equation}\label{equi}\int |\nabla_\e ( |s_1| ^{(p-1)/2})|^2
\phi^{2p}\leq C \Big(\int \big(|\nabla_\e\phi|^{2}+\phi^2\big)^p
+\end{equation}$$+ \int |\nabla_\e z|^{p+1/2} \phi^{2p} 
+ \int  |X_2( \p_2 u)|^p  \phi^{2p} +  \int |f| ^{(2p+1)/7} \phi^{2p} + \int |f|^{(2p+1)/5}(|\nabla_\e \phi| + \phi)^{(2p+1)/5} \phi^{(8p-1)/5} $$ 
$$+ \int|\nabla^2_\e u| |\nabla_\e z|^{p-1} \phi^{2p} 
+ \int|\nabla^2_\e u|^2 |\nabla_\e z|^{p-1} \phi^{2p} + 
\int|\nabla^2_\e u| |\nabla_\e z|^{p-1} \phi^{2p-1}|\nabla_\e \phi|\Big) .$$
An analogous
estimate holds for $s_2=X_2z$, i.e.
\begin{equation}\label{equi2}\int |\nabla_\e ( |s_2| ^{(p-1)/2})|^2
\phi^{2p}\leq C \Big(\int \big(|\nabla_\e\phi|^{2}+\phi^2\big)^p
+\end{equation}$$
+ \int |\nabla_\e z|^{p+1/2} \phi^{2p} 
+ \int  |X_2( \p_2 u)|^p  \phi^{2p} +  \int |f| ^{(2p+1)/7} \phi^{2p} + \int |f|^{(2p+1)/5}(|\nabla_\e \phi| + \phi)^{(2p+1)/5} \phi^{(8p-1)/5} $$
$$+ \int|\nabla^2_\e u| |\nabla_\e z|^{p-1} \phi^{2p} 
+ \int|\nabla^2_\e u|^2 |\nabla_\e z|^{p-1} \phi^{2p} + 
\int|\nabla^2_\e u| |\nabla_\e z|^{p-1} \phi^{2p-1}|\nabla_\e \phi| \Big).$$
The conclusion follows  immediately.

\end{proof}

\begin{theorem}\label{teo1}
Let $p\geq 3$ be fixed and $u$ be a function satisfying the bound \eqref{MMMM= }. Let $f\in
C^\infty(\Omega)$, and  let $z$ be a smooth solution of equation
(\ref{eq111}). There exists a constant $C$, 
which depends on $p$ and the constant $M$ in (\ref{MM}) but is
independent of $\e$ and $z$ such that for every $\phi\in
C^\infty_0(\Omega),$ $\phi>0,$

$$\int |\nabla_\e z|^{p+1/2}\phi^{2p} + \int |\nabla_\e
\big(|\nabla_\e z|^{(p-1)/2}\big)|^{2}\phi^{2p} \leq$$ $$ \leq
C\Big( \int |z|^{4p+2} \phi^{2p} +  \int \big( \phi^2 + |\nabla_\e
\phi|^2\big)^p + \int |\nabla_\e( \p_2 u)|^p \phi^{2p}+ $$
$$+   \int \Big(|f|^{(2p+1)/7} 
 \phi^{2p} +   |f|^{(2p+1)/5}(|\nabla_\e \phi| + \phi)^{(2p+1)/5} \phi^{(8p-1)/5}  \Big) + \int|\nabla^2_\e u|^{2p} 
\phi^{2p}\Big). $$
\end{theorem}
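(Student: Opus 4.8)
The plan is to couple the Caccioppoli type inequality of Lemma~\ref{lemmalteo1} with the interpolation inequality of Proposition~\ref{3.3}. The one genuinely delicate point is that the quantity $\int|\nabla_\e z|^{p+1/2}\phi^{2p}$ sits on the right-hand side of \emph{both} estimates, so the whole argument rests on a careful absorption, in which the order in which the small parameters are chosen matters.

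First I apply Proposition~\ref{3.3} to $z$, once with $i=1$ and once with $i=2$, and add the two inequalities. Using $|\nabla_\e z|^{p+1/2}\le C\bigl(|X_1z|^{p+1/2}+|X_2z|^{p+1/2}\bigr)$ and $|X_i\phi|\le|\nabla_\e\phi|$, this yields, for every $\delta>0$,
\[
\int|\nabla_\e z|^{p+1/2}\phi^{2p}\le\frac{C}{\delta}\int\Bigl(|z|^{4p+2}\phi^{2p}+|z|^{(2p+1)/2}|\nabla_\e\phi|^{(2p+1)/2}+|z|^{(2p+1)/2}\phi^{2p}\Bigr)+\delta\sum_{i=1}^{2}\int\bigl|\nabla_\e\bigl(|X_iz|^{(p-1)/2}\bigr)\bigr|^{2}\phi^{2p}.
\]
The two terms with the exponent $(2p+1)/2$ on $z$ are of lower order and are reabsorbed by Young's inequality into $\int|z|^{4p+2}\phi^{2p}$ and $\int(\phi^2+|\nabla_\e\phi|^2)^p$, so only the last sum remains to be controlled.

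For that sum I use the estimates \eqref{equi} and \eqref{equi2} established in the proof of Lemma~\ref{lemmalteo1}, which bound $\int\bigl|\nabla_\e(|X_iz|^{(p-1)/2})\bigr|^2\phi^{2p}$, for $i=1,2$, by $C$ times the sum of $\int(\phi^2+|\nabla_\e\phi|^2)^p$, $\int|\nabla_\e z|^{p+1/2}\phi^{2p}$, $\int|X_2(\p_2u)|^p\phi^{2p}$, the two integrals involving $|f|$ that occur verbatim in the statement of the Theorem, and the three remainder terms $\int|\nabla^2_\e u|\,|\nabla_\e z|^{p-1}\phi^{2p}$, $\int|\nabla^2_\e u|^2\,|\nabla_\e z|^{p-1}\phi^{2p}$ and $\int|\nabla^2_\e u|\,|\nabla_\e z|^{p-1}\phi^{2p-1}|\nabla_\e\phi|$. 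Since $|X_2(\p_2u)|\le|\nabla_\e(\p_2u)|$, the third of these quantities is already one of the admissible terms. Each remainder term I dispose of by Young's inequality, writing it as a small multiple $\eta\int|\nabla_\e z|^{p+1/2}\phi^{2p}$ plus $C(\eta)$ times an integral of a power of $|\nabla^2_\e u|$ weighted by $\phi^{2p}$ (for the last term, also by $(\phi^2+|\nabla_\e\phi|^2)^p$); a direct bookkeeping of exponents, in which the critical inequality is $(4p+2)/3\le 2p$ (valid for $p\ge 1$, a fortiori under our hypothesis $p\ge 3$), shows that every power of $|\nabla^2_\e u|$ so produced is $\le 2p$ and is therefore controlled by $\int|\nabla^2_\e u|^{2p}\phi^{2p}+\int(\phi^2+|\nabla_\e\phi|^2)^p$.

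Writing $\mathcal R$ for the sum of all the admissible terms appearing on the right-hand side of the Theorem and collecting the previous steps, one obtains an inequality of the shape
\[
\int|\nabla_\e z|^{p+1/2}\phi^{2p}\le\delta\,C_0\int|\nabla_\e z|^{p+1/2}\phi^{2p}+\Bigl(\delta\,C_0+\frac{C}{\delta}\Bigr)\mathcal R,
\]
where $C_0$ depends only on $p$ and on the constant $M$ of \eqref{MM}, provided the Young parameter $\eta$ was fixed (small, in terms of $p$ and $M$ only) \emph{before} $\delta$. Choosing $\delta$ with $\delta\,C_0<\tfrac12$ lets me absorb the first term on the right into the left and conclude $\int|\nabla_\e z|^{p+1/2}\phi^{2p}\le C\mathcal R$. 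Inserting this into the right-hand side of Lemma~\ref{lemmalteo1}, and treating the remainder terms there exactly as above, yields $\int\bigl|\nabla_\e(|\nabla_\e z|^{(p-1)/2})\bigr|^2\phi^{2p}\le C\mathcal R$ as well; adding the two bounds gives the statement. The main obstacle is precisely this circularity: the term $\int|\nabla_\e z|^{p+1/2}\phi^{2p}$ must be absorbed out of the right-hand sides of both Proposition~\ref{3.3} and Lemma~\ref{lemmalteo1}, which forces one to fix the Young parameter in the $\nabla^2_\e u$ splitting first, the interpolation parameter $\delta$ only afterwards, and to verify along the way that no power of $|\nabla^2_\e u|$ generated ever exceeds $2p$.
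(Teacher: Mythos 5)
Your proposal is correct and follows essentially the same route as the paper: the paper's proof likewise applies Proposition~\ref{3.3} to $s_i=X_iz$, controls the resulting gradient term via \eqref{equi}--\eqref{equi2} from Lemma~\ref{lemmalteo1}, reduces the $\nabla^2_\e u$ remainders by Young's inequality to powers at most $2p$, and then absorbs $\delta\int|\nabla_\e z|^{p+1/2}\phi^{2p}$ for $\delta$ small. Your write-up is in fact slightly more explicit than the paper's about the order in which the Young and interpolation parameters are fixed, but the argument is the same.
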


\begin{proof}
 By Proposition \ref{3.3}, and Lemma \ref{lemmalteo1} calling $s_1 = X_1 z$ we have 
$$ \int |s_1| ^{p+1/2} \phi^{2p} \leq \frac{C}{\delta} \int \Big( |z|^{4p+2}
\phi^{2p} +  |z|^{(2p+1)/2} \phi^{2p}  + |z|^{(2p+1)/2} 
|\nabla_e \phi|^{(2p+1)/2}\Big) +$$$$   + \int \big(|\nabla_\e\phi|^{2}+\phi^2\big)^p +$$$$+
\delta  \int (|\nabla_\e z|^{p+1/2} + |X_2( \p_2 u)|^p ) \phi^{2p}
+ \int(|\nabla^2_\e u|^{2p} + |\nabla^2_\e u|^{p}) \phi^{2p}  + $$
$$+  C \int \Big(|f|^{(2p+1)/7} 
 \phi^{2p} +   |f|^{(2p+1)/5}(|\nabla_\e \phi| + \phi)^{(2p+1)/5} \phi^{(8p-1)/5}  \Big). $$
To estimate  $s_2= X_2z$ we argue  in the same way   and
obtain
$$ \int |s_2| ^{p+1/2}
\phi^{2p} \leq   \frac{C}{\delta} \int \Big( |z|^{4p+2}
\phi^{2p} +  |z|^{(2p+1)/2} \phi^{2p}  + |z|^{(2p+1)/2} 
|\nabla_e \phi|^{(2p+1)/2}\Big) +$$$$  + \int \big(|\nabla_\e\phi|^{2}+\phi^2\big)^p +$$$$+
\delta  \int (|\nabla_\e z|^{p+1/2} + |X_2( \p_2 u)|^p ) \phi^{2p}
+ \int(|\nabla^2_\e u|^{2p} + |\nabla^2_\e u|^{p}) \phi^{2p}  + $$
$$+  C \int \Big(|f|^{(2p+1)/7} 
 \phi^{2p} +  |f|^{(2p+1)/5}(|\nabla_\e \phi| + \phi)^{(2p+1)/5} \phi^{(8p-1)/5} \Big). $$
Hence, if $\delta$ is sufficiently small $$ \int \Big(|s_1| ^{p+1/2} +
|s_2|^{p+1/2}\Big) \phi^{2p} \leq \frac{C}{\delta} \int \Big( |z|^{4p+2}
\phi^{2p} +  |z|^{(2p+1)/2} \phi^{2p}  + |z|^{(2p+1)/2} 
|\nabla_\e \phi|^{(2p+1)/2}\Big) +$$$$+ \int \big(|\nabla_\e\phi|^{2}+\phi^2\big)^p + 
 \int |\nabla_e( \p_2 u)|^p   \phi^{2p} + \int(|\nabla^2_\e
u|^{2p} + |\nabla^2_\e u|^{p}) \phi^{2p}+ $$
$$+  C \int \Big(|f|^{(2p+1)/7} 
 \phi^{2p} +   |f|^{(2p+1)/5}(|\nabla_\e \phi| + \phi)^{(2p+1)/5} \phi^{(8p-1)/5}  \Big). $$
The conclusion follows from the latter, (\ref{equi}) and \eqref{equi2} and the H\"older inequality
$$\int|\nabla_\e u|^{p}\phi^{2p} \leq 
\int|\nabla_\e u|^{2p}\phi^{2p} + \int\phi^{2p}.
$$ 
\end{proof}

Next we  iterate once the previous result

\begin{theorem}\label{teo2}
Let $p\geq 3$ be fixed and $u$ be a function satisfying the bound \eqref{MMMM= }. Consider a  function  $f\in
C^\infty(\Omega)$, and   $z$  a smooth solution of equation
(\ref{eq111}).
 Let $\O_1,\O_2$ so that $\O_1\subset\subset \O_2\subset\subset
\Omega$. There exists a constant $C$,  which depends on
$p,$ on $\O_i$, and on the constant $M$ in (\ref{MM}), but is
independent of $\e$ or $z$ such that $$||z||^{p+1/2}_{W^{2,p+1/2}_\e(\O_1)}
+ \sum_{|I|=2} |||\nabla_\e^I z| ^{(p-1)/2} ||^2_{W^{1,2}_\e(\O_1)}\leq $$
$$ \leq C\Big( ||f||^{(2p+1)/5}_{W^{1, (2p+1)/5}_\e(\O_2)} +
 ||v||^{4p+2}_{W^{1,4p+2}_\e(\O_2)} + $$$$+  ||u||^{4p+2}_{W^{2,4p+2}_\e(\O_2)} +
 ||u||^{2p/3}_{W^{3, 2p/3}_\e(\O_2)} +
||z||^{4p+2}_{W^{1,4p+2}_\e(\O_2)} +
 ||z||^{2p/3}_{W^{2, 2p/3}_\e(\O_2)} +1\Big). $$
 Here $I$ is a multi-index and $\nabla_\e^I z$ denotes derivatives of order $|I|$ along $X_{i,u}$, $i=1,2$.
\end{theorem}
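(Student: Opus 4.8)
The plan is to differentiate the equation and feed the first intrinsic derivatives of $z$ into the a priori estimate already proved in Theorem \ref{teo1}. By Lemmas \ref{lem4} and \ref{lem6}, the functions $s_1=X_1z$ and $s_2=X_2z$ are themselves classical solutions of an equation of the form (\ref{eq111}),
$$M_\e s_k=\tilde f_k,\qquad \tilde f_k=X_kf+X_i\big(g_i^{(k)}\big)+h^{(k)},$$
where $g_i^{(k)}$ is a smooth expression in $\nabla_\e u$, $(1+|\nabla_\e u|^2)^{-1/2}$, $\nabla^2_\e u$ and $\nabla_\e z$, and $h^{(k)}$ is of the same type but in addition linear in $\p_2u$ and in $\nabla^2_\e z$. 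Since $u,z,f$ are smooth, $\tilde f_k\in C^\infty(\Omega)$, so I would apply Theorem \ref{teo1} — equivalently, re-run its proof, namely Lemma \ref{lemmalteo1} and the interpolation Proposition \ref{3.3} — with $z$ replaced by $s_k$ and $f$ replaced by $\tilde f_k$. Summing the two resulting inequalities over $k=1,2$, and using that $|\nabla_\e s_1|+|\nabla_\e s_2|$ dominates every second-order intrinsic derivative of $z$ (and similarly for the gradient term), produces a Caccioppoli-type bound for the whole left-hand side of the theorem in terms of $\int|s_k|^{4p+2}\phi^{2p}$, $\int(\phi^2+|\nabla_\e\phi|^2)^p$, $\int|\nabla_\e(\p_2u)|^p\phi^{2p}$, $\int|\nabla^2_\e u|^{2p}\phi^{2p}$ and the two ``$\tilde f_k$-slots'' $\int|\tilde f_k|^{(2p+1)/7}\phi^{2p}$ and $\int|\tilde f_k|^{(2p+1)/5}(|\nabla_\e\phi|+\phi)^{(2p+1)/5}\phi^{(8p-1)/5}$ inherited from Theorem \ref{teo1}.

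The second step is to distribute these terms by H\"older's and Young's inequalities, with $\phi$ fixed equal to $1$ on $\O_1$ and supported in $\O_2$. Expanding the divergences, $|\tilde f_k|\lesssim|\nabla_\e f|+|\nabla^3_\e u|\,|\nabla_\e z|+|\nabla^2_\e u|\,|\nabla^2_\e z|+|\nabla^2_\e u|\,|\nabla_\e z|+|\nabla^2_\e z|$ (using $|\p_2u|\le M$). The factor $|\nabla_\e f|$ in the second slot yields $||f||^{(2p+1)/5}_{W^{1,(2p+1)/5}_\e(\O_2)}$, the first-slot analogue being weaker since $(2p+1)/7<(2p+1)/5$. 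Each product $|\nabla^3_\e u|\,|\nabla_\e z|$ splits by Young's inequality into a power of $|\nabla^3_\e u|$ not exceeding $2p/3$ and a power of $|\nabla_\e z|$ not exceeding $4p+2$, giving $||u||^{2p/3}_{W^{3,2p/3}_\e(\O_2)}$ and $||z||^{4p+2}_{W^{1,4p+2}_\e(\O_2)}$. Each product $|\nabla^2_\e u|\,|\nabla^2_\e z|$ splits either into $\delta\,|\nabla^2_\e z|^{p+1/2}$, absorbed into the left-hand side for $\delta$ small once the sum over $k$ has been carried out, plus a power of $|\nabla^2_\e u|$ not exceeding $4p+2$; or, where needed, into a power of $|\nabla^2_\e z|$ not exceeding $2p/3$ plus a power of $|\nabla^2_\e u|$ not exceeding $4p+2$, giving $||z||^{2p/3}_{W^{2,2p/3}_\e(\O_2)}$ and $||u||^{4p+2}_{W^{2,4p+2}_\e(\O_2)}$. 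The remaining pieces $|\nabla^2_\e u|\,|\nabla_\e z|$, $|\nabla^2_\e z|$ alone, $|\nabla^2_\e u|^{2p}$, $|\nabla_\e(\p_2u)|^p$, $|s_k|^{4p+2}$ and the pure cutoff term fall, after Young's inequality, into the same norms (here $v=\p_2u$) plus the constant $1$. Absorbing the $\delta$-contributions and restricting the left-hand integrals to $\O_1$ yields the stated inequality, after summing over $|I|=2$.

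I expect the main obstacle to be the exponent bookkeeping: one has to verify that every Young split above is admissible for all $p\ge 3$ — i.e.\ that $p+1/2$, $2p/3$, $(2p+1)/5$, $(2p+1)/7$ and $4p+2$ are mutually compatible in the various H\"older pairings — and, more delicately, that throughout the Caccioppoli computation for $s_k$ the top-order factor $\nabla^2_\e z$ never occurs undifferentiated at a power larger than $p+1/2$. It must always appear either inside a total $X_i$-derivative, so that one integration by parts (using $X_1^*=-X_1-\p_2u$, $X_2^*=-X_2$) transfers the derivative onto lower-order factors or onto $\phi$, or weighted by $|\nabla_\e s_k|^{p-3}$ so as to be absorbable into the gradient term $\int|\nabla_\e(|\nabla_\e s_k|^{(p-1)/2})|^2\phi^{2p}$ on the left; this is exactly what the divergence structure of $\tilde f_k$ in Lemmas \ref{lem4}--\ref{lem6} guarantees, and it is also what caps the order of differentiation of $u$ on the right at three. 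Finally, $\e$-independence of the constant is automatic, since every constant borrowed from Theorem \ref{teo1}, Lemma \ref{lemmalteo1} and Proposition \ref{3.3} is $\e$-uniform, and one only adds the $\e$-uniform information of (\ref{MM}).
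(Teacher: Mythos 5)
Your proposal follows essentially the same route as the paper's proof: by Lemmas \ref{lem4} and \ref{lem6} the derivatives $s_k=X_kz$ solve (\ref{eq111}) with new right-hand sides $\tilde f_k$, Theorem \ref{teo1} is applied to each $s_k$, and the $\tilde f_k$-contributions are then split by H\"older/Young into exactly the norms appearing in the statement (the paper does this via an explicit three-factor product inequality in $L^{(2p+1)/5}$). The only cosmetic difference is that your schematic bound for $|\tilde f_k|$ omits the term $|\nabla^2_\e u|^2\,|\nabla_\e z|$ coming from second derivatives of the coefficients $a_{ij}(\nabla_\e u)$, but your Young-splitting scheme absorbs it in the same way into $\|u\|^{4p+2}_{W^{2,4p+2}_\e(\O_2)}$ and $\|z\|^{4p+2}_{W^{1,4p+2}_\e(\O_2)}$, so this is not a genuine gap.
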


\begin{proof} If $z$  is a solution of  (\ref{eq111}) then, by
Lemma \ref{lem6} the function  $s_{2}=X_2 z$  is a solution of the
 equation:
$$M_\e s_2= \tilde f_0$$
where 
$$\tilde f_0= X_2f - X_{i}\Bigg(\frac{a_{i1}(\nabla_\e u)}{\sqrt{1 +
|\nabla_\e u|^2}} \p_2 u X_2 z \Bigg) -$$$$-
X_{i}\Bigg(X_2\Big(\frac{a_{ij}(\nabla_\e u)}{\sqrt{1 +
|\nabla_\e u|^2}}\Big) X_jz \Bigg)
 - \p_2 u  X_2\Bigg(\frac{a_{1j}(\nabla_\e u)}{\sqrt{1 +
|\nabla_\e u|^2}}X_jz \Bigg).$$
Let us choose $\O_3$ such that
$\O_1\subset\subset\O_3\subset\subset\O_2$. By Theorem \ref{teo1}
there exists a constant $C$   independent of $\e$ such
that
\begin{equation}\label{normtz}
||s_{2}||^{p+1/2}_{W^{1,p+1/2}_\e(\O_3)} +
||\ |\nabla_\e
s_{2}|^{(p-1)/2}||^2_{W^{1,2}_\e(\O_3)}\leq
\end{equation}
$$\leq  C\Big( ||\tilde f_{0}||^{(2p+1)/5}_{L^{(2p+1)/5}(\O_2)}+
||v||^{p}_{W^{1,p}_\e(\O_2)} + ||u||^{2p}_{W^{2,2p}_\e(\O_2)}  +  ||s_2||^{4p+2}_{L^{4p+2}(\O_2)}+1 \Big).$$
We note that for three fixed functions $f,g,h,$
$$ || fgh||^{(2p+1)/5}_{L^{(2p+1)/5}(\O_2)}\leq $$
$$\leq || f||^{2p/3}_{L^{2p/3}(\O_2)} +  
||g||^{4p(2p+1)/4p-3}_{L^{4p(2p+1)/4p-3}(\O_2)} +  || h||^{4p(2p+1)/4p-3}_{L^{4p(2p+1)/4p-3}(\O_2)}\leq $$
$$\leq || f||^{2p/3}_{L^{2p/3}(\O_2)} +  
||g||^{4p+2}_{L^{4p+2}(\O_2)} +  || h||^{4p+2}_{L^{4p+2}(\O_2)} + C. $$
If follows that 
$$||\tilde f_{0}||^{(2p+1)/5}_{L^{(2p+1)/5}(\O_2)}\leq $$
$$\leq ||f||^{(2p+1)/5}_{W^{1, (2p+1)/5}(\O_2)}+ 
|| \nabla_\e^2 u v \nabla_\e z||^{(2p+1)/5}_{L^{(2p+1)/5}(\O_2)}
+|| \nabla_\e v \nabla_\e z||^{(2p+1)/5}_{L^{(2p+1)/5}(\O_2)}+
$$$$+
||( \nabla_\e^2 u)^2  \nabla_\e  z||^{(2p+1)/5}_{L^{(2p+1)/5}(\O_2)}+
||(1+ \nabla_\e^2 u )v \nabla_\e^2 z||^{(2p+1)/5}_{L^{(2p+1)/5}(\O_2)}+
$$$$+ 
|| \nabla_\e^3 u \nabla_\e z||^{(2p+1)/5}_{L^{(2p+1)/5}(\O_2)}+ || \nabla_\e^2 u \nabla_\e ^2z||^{(2p+1)/5}_{L^{(2p+1)/5}(\O_2)}\leq $$ 
$$ \leq C\Big( ||f||^{(2p+1)/5}_{W^{1, (2p+1)/5}_\e(\O_2)} +
 ||v||^{4p+2}_{W^{1,4p+2}_\e(\O_2)} + $$$$+  ||u||^{4p+2}_{W^{2,4p+2}_\e(\O_2)} +
 ||u||^{2p/3}_{W^{3, 2p/3}_\e(\O_2)} +
||z||^{4p+2}_{W^{1,4p+2}_\e(\O_2)} +
 ||z||^{2p/3}_{W^{2, 2p/3}_\e(\O_2)} +1\Big). $$
Arguing in the same way with  the function $s_{1}=X_1z$ we conclude
the proof.\end{proof}

\bigskip

Iterations of Theorem \ref{teo1} yield the following

\begin{theorem}\label{teo3}
Let $p\geq 3$, $m\geq 1$ be a fixed positive integer and $u$
be a function satisfying the bound \eqref{MMMM= }. Assume that
$f\in C^\infty( \Omega)$, and let $z$ be a smooth solution of equation
(\ref{eq111}) in $\Omega$. If
$\O_1\subset\subset\O_2\subset\subset \Omega$ then there exists a
constant $C$   which depends on $p,$ $\O_i$ and on
$M$ in (\ref{MM}), but is independent of $\e$ or $z$ such that
the solution satisfies the following estimate

$$||z||^{p+1/2}_{W^{m+1,p+1/2}_\e(\O_1)}
+ \sum_{|I|=m+1} |||\nabla_\e^I z| ^{(p-1)/2} ||^2_{W^{1,2}_\e(\O_1)}\leq $$
$$ \leq C\Big( ||f||^{(2p+1)/5}_{W^{m, (2p+1)/5}_\e(\O_2)} +
 ||v||^{4p+2}_{W^{m,4p+2}_\e(\O_2)} + $$$$+  ||u||^{4p+2}_{W^{m+1 ,4p+2}_\e(\O_2)} +
 ||u||^{2p/3}_{W^{m+2, 2p/3}_\e(\O_2)} +
||z||^{4p+2}_{W^{m,4p+2}_\e(\O_2)} +
 ||z||^{2p/3}_{W^{m+1, 2p/3}_\e(\O_2)} + 1\Big). $$

\end{theorem}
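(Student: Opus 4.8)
The plan is to argue by induction on $m$, the base case $m=1$ being precisely Theorem \ref{teo2}. For the inductive step assume the estimate holds with $m$ replaced by $m-1$, for every pair of open sets compactly contained in $\Om$, every right-hand side in $C^\infty(\Om)$ and every smooth solution of \eqref{eq111}. Given a smooth solution $z$ of $M_\e z=f$, Lemmas \ref{lem4} and \ref{lem6} show that for $k=1,2$ the function $s_k=X_{k}z$ solves $M_\e s_k=\tilde f_k$, where $\tilde f_k$ has the structure of the right-hand side $\tilde f_0$ in the proof of Theorem \ref{teo2}: a finite sum of $X_kf$ and of products of at most second-order $\nabla_\e$-derivatives of $u$ and of $v=\p_2u$ with at most second-order $\nabla_\e$-derivatives of $z$, with coefficients that are bounded functions of $\nabla_\e u$. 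Fix an intermediate open set $\O_1\subset\subset\O_3\subset\subset\O_2$ (determined by $\O_1$ and $\O_2$) and apply the inductive hypothesis at order $m-1$ to each $s_k$ on the pair $(\O_1,\O_3)$. This bounds $||s_k||^{p+1/2}_{W^{m,p+1/2}_\e(\O_1)}$ and $\sum_{|J|=m}|||\nabla_\e^{J}s_k|^{(p-1)/2}||^2_{W^{1,2}_\e(\O_1)}$ by $||\tilde f_k||^{(2p+1)/5}_{W^{m-1,(2p+1)/5}_\e(\O_3)}$ together with $||v||^{4p+2}_{W^{m-1,4p+2}_\e(\O_3)}$, $||u||^{4p+2}_{W^{m,4p+2}_\e(\O_3)}$, $||u||^{2p/3}_{W^{m+1,2p/3}_\e(\O_3)}$, $||s_k||^{4p+2}_{W^{m-1,4p+2}_\e(\O_3)}$, $||s_k||^{2p/3}_{W^{m,2p/3}_\e(\O_3)}$ and $1$. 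Since the operators $\nabla_\e^{J}X_k$ with $|J|=m$ and $k=1,2$ span all intrinsic derivatives of order $m+1$, summing over $k$ reproduces the left-hand side of the inequality to be proved, up to the lower-order norm $||z||_{W^{m,p+1/2}_\e(\O_1)}$, which is in turn dominated by the right-hand side; moreover, commuting each $X_k$ past $\nabla_\e^{J}$ — which, since $[X_1,X_2]=-(\p_2u)X_2$, only generates lower-order terms whose coefficients are $\nabla_\e$-derivatives of $v$ — one has $||s_k||_{W^{j,q}_\e(\O_3)}$ bounded by $||z||_{W^{j+1,q}_\e(\O_3)}$ plus lower-order products of derivatives of $v$ and of $z$ that are themselves controlled by the right-hand side. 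Everything therefore reduces to estimating $||\tilde f_k||_{W^{m-1,(2p+1)/5}_\e(\O_3)}$.

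This estimate is carried out exactly as the corresponding bound of $\tilde f_0$ in the proof of Theorem \ref{teo2}. Applying the Leibniz rule to the $m-1$ additional intrinsic derivatives of each summand of $\tilde f_k$ produces a finite sum of products in which $u$ carries at most $m+2$ derivatives (the value $m+2$ occurring only on the summands descending from the $\nabla_\e^{3}u$ term of $\tilde f_0$), $v$ at most $m$, and $z$ at most $m+1$; moreover in any such product at most one factor carries the maximal admissible order ($m+1$ for $z$, $m+2$ for $u$), and whenever it does, every remaining factor carries so few derivatives that it may be placed at a Lebesgue exponent strictly below $4p+2$. One then distributes the $L^{(2p+1)/5}$ norm of each product over its factors by the generalized H\"older inequality: a factor $\nabla_\e^{m+1}z$ or $\nabla_\e^{m+2}u$ is sent to $L^{2p/3}$, producing $||z||_{W^{m+1,2p/3}_\e(\O_3)}$ or $||u||_{W^{m+2,2p/3}_\e(\O_3)}$; every other factor is sent to the complementary exponent, which lies below $4p+2$ and hence is controlled on the bounded set $\O_3$ by $||v||_{W^{m,4p+2}_\e(\O_3)}$, $||u||_{W^{m+1,4p+2}_\e(\O_3)}$ or $||z||_{W^{m,4p+2}_\e(\O_3)}$; and the derivatives of $f$ are collected directly as $||f||_{W^{m,(2p+1)/5}_\e(\O_3)}$. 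Young's inequality together with the elementary bound $||g||_{L^{q}(\O_3)}\le||g||_{L^{q'}(\O_3)}+C$ for $q\le q'$, used repeatedly in this section, then reduces the various powers to those in the statement and absorbs all genuinely lower-order mixed terms into the constant. Finally, replacing $\O_3$ by the larger $\O_2$ only enlarges the constant, which completes the inductive step.

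The step I expect to demand the most care is this last piece of bookkeeping. One must check that after $m-1$ Leibniz differentiations of $\tilde f_k$ every monomial admits a two- or three-factor H\"older splitting with at most one factor at the $2p/3$-level, so that the Lebesgue exponents attached to the remaining factors remain pinned strictly below the fixed, $m$-independent value $4p+2$ — and in particular that the exponent $2p/3$, and nothing larger, is forced, it being inherited without change from the three-factor split in the proof of Theorem \ref{teo2}. One must also keep track of the commutator corrections: reordering $X_1$ and $X_2$ produces factors of $v=\p_2u$ and of its $\nabla_\e$-derivatives (because $[X_1,X_2]=-(\p_2u)X_2$), and these, together with the $\nabla_\e u$-dependent coefficients of the $X_i$, have to be verified never to raise the differentiation order past $m$ on $v$, $m+1$ on $z$ or $m+2$ on $u$. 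Lastly, one should note that the dependence of the constant on the domains $\O_1\subset\subset\O_2$ enters only through the cutoff functions in the Caccioppoli inequality of Lemma \ref{lem1} and through the finite chain of nested sets, and so remains finite under the $m$ iterations.
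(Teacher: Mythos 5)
Your overall strategy---induction on $m$ with Theorem \ref{teo2} as base case, passing to $s_k=X_kz$, which by Lemmas \ref{lem4} and \ref{lem6} solves \eqref{eq111} with a new right-hand side $\tilde f_k$, and then re-estimating $\tilde f_k$ exactly as $\tilde f_0$ is estimated in the proof of Theorem \ref{teo2}---is precisely what the paper intends: the paper's own ``proof'' consists of the single sentence that the statement follows by iterating Theorem \ref{teo1}, so at the level of architecture you and the authors agree. (A minor remark: the commutator discussion when you convert $\|s_k\|_{W^{j,q}_\e}$ into $\|z\|_{W^{j+1,q}_\e}$ is superfluous, since the intrinsic Sobolev norms are defined through all orderings of the vector fields $X_{1,u},X_{2,u}$, so $\nabla_\e^{J}X_kz$ is already one of the derivatives counted in $\|z\|_{W^{|J|+1,q}_\e}$.)

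The genuine difficulty sits exactly in the bookkeeping step you flag, and as you state it the step fails for large $m$. You claim that after the $m-1$ Leibniz differentiations every monomial of $\nabla_\e^{m-1}\tilde f_k$ admits a two- or three-factor H\"older splitting with at most one factor at the $2p/3$ level and all remaining factors at exponents not exceeding the $m$-independent value $4p+2$. But when many of the extra derivatives land on the nonlinear coefficients $a_{ij}(\nabla_\e u)(1+|\nabla_\e u|^2)^{-1/2}$, the Fa\`a di Bruno expansion produces monomials such as $b^{(m-1)}(\nabla_\e u)\,(\nabla_\e^2u)^{m-1}\,v\,\nabla_\e^2 z$, containing arbitrarily many factors of order exactly two. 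The norms on the right-hand side of the theorem give at best $\nabla_\e^2u,\ \nabla_\e^2z\in L^{4p+2}$ (the $2p/3$-level norms are weaker at that order), so H\"older controls this product only in $L^{(4p+2)/m}$, and $(4p+2)/m<(2p+1)/5$ as soon as $m>10$; no regrouping that assigns to each factor one of the listed exponents can do better, so the inductive step does not close with the stated ingredients. To repair it one needs something your proof does not supply: either interpolation inequalities of Gagliardo--Nirenberg type for the intrinsic norms, exploiting the $L^\infty$ bounds on $\nabla_\e u$ and $v=\p_2u$ contained in the constant $M$ of \eqref{MM}, to raise the integrability of the intermediate-order factors; or else the observation that the estimate is only ever invoked inside the bootstrap of Theorem \ref{prop32}, where at the moment the order is raised from $m$ to $m+1$ all derivatives of $z$ and $v$ of order $\le m-1$ are already bounded in every $L^p$ uniformly in $\e$, so that the offending intermediate factors are absorbed into the constant. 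Your write-up should either import one of these two ingredients explicitly or weaken the claimed splitting accordingly.
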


\bigskip

\section{A priori estimates for the non-linear approximating PDE}

We now return to the equation $L_\e u=0.$ Let $u$ be a smooth solution satisfying 
\eqref{MMMM= }. 
In view of Proposition \ref{Calphaestimate} and Theorem \ref{p402} (i) we have the following statement: for every open set 
$\O_1\subset\subset\Omega$ 
  there exists a positive constant 
constant $C$ which depends on $\O_1$ and on
$M$ in (\ref{MM}), but is independent of $\e$ such that

\begin{equation}\label{stima0}
||u|| _{W^{2, 10/3}_\e(\O_1)} 
+ ||\p_2u|| _{W^{1, 2}_\e(\O_1)} 
+ ||u|| _{C^{1, \alpha}_E(\O_1)} \leq C.
\end{equation}

Our first step is the higher integrability of the Hessian of $u$. 
The proof rests on the estimates obtainted from the freezing technique  in
Theorem \ref{p402} and from a new 
Euclidean\footnote{rather than subelliptic } Cacciopoli inequality \eqref{CacciEucl}.

\begin{lemma}\label{unormaw2p}

Let $u$ be a smooth solution of 
$$L_\e u=0,$$
in $\Om\subset \R^2$ satifying \eqref{MMMM= } and denote $v=\p_2 u$. 
For every open set 
$\O_1\subset\subset\Omega$, for every $p\geq
1$ there exists a positive constant 
$C$ which depends on $\O_1$,  $p$, and on
$M$ in (\ref{MM}), but is independent of $\e$ such that
 $$ ||u||^p_{W^{2,p}_\e(\O_1)} +  ||\nabla_\e v||^4_{L^{4} (\Omega_1)}\leq C.$$
\end{lemma}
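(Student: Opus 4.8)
The plan is to bootstrap from the a priori estimate \eqref{stima0} in two moves: first to establish, uniformly in $\e$, the bound $\|\nabla_\e v\|_{L^4(\O_1)}\le C$ for $v=\p_2u$ by means of the Euclidean Caccioppoli inequality \eqref{CacciEucl}; and then to feed this bound into Theorem \ref{p402}(ii), applied to $z=u$, so as to obtain $\|u\|_{W^{2,p}_\e(\O_1)}\le C$ for every $p>1$. Combining the two moves is exactly the assertion, since $\nabla_\e v$ is not controlled by $\|u\|_{W^{2,p}_\e}$ uniformly in $\e$ (the intrinsic Hessian $\nabla_\e^2u$ only sees $\e\,\nabla_\e v$).

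For the first move I would argue as follows. By \eqref{stima0}, for $\O_1\subset\subset\O_2\subset\subset\Om$ one has $\|\nabla_\e^2u\|_{L^{10/3}(\O_2)}+\|\nabla_\e v\|_{L^2(\O_2)}+\|u\|_{C^{1,\alpha}_E(\O_2)}\le C$ together with $\|v\|_{L^\infty}\le M$; moreover, by Lemma \ref{equ2n}, $v$ solves \eqref{2nderiv}, which, after absorbing the quadratic divergence term into the flux, takes the divergence form $X_i\big(b_{ij}X_jv+\delta_{j1}b_{i1}v^2\big)=-b_{11}v^3-b_{1j}vX_jv$ with $b_{ij}=a_{ij}(\nabla_\e u)/\sqrt{1+|\nabla_\e u|^2}$ uniformly elliptic. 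The inequality \eqref{CacciEucl} is then obtained by differentiating this equation along $X_k$, testing against a weight of the form $|X_kv|^{p-3}X_kv\,\phi^{2p}$, $p\ge 3$, and carrying out the integrations by parts carefully; the decisive point — and the reason it is ``Euclidean'' rather than the subelliptic Caccioppoli of Theorems \ref{teo1}--\ref{teo3} — is that the second derivatives of $u$ are arranged to enter the right-hand side only through $\|\nabla_\e^2u\|_{L^{10/3}}$, while the nonlinear terms $v^3$ and $vX_jv$ are absorbed using $\|v\|_{L^\infty}\le M$. Taking $p=\tfrac72$, invoking the subelliptic Sobolev embedding $W^{1,2}_\e\hookrightarrow L^{10/3}$ for the lifted structure of Section 3 (whose constants are uniform in $\e$), and absorbing the $\int|\nabla_\e v|^{p}\phi^{2p}$ term that reappears on the right (legitimate since $p<p+\tfrac12$), one concludes from \eqref{stima0} that $\|\nabla_\e v\|_{L^4(\O_1)}\le C$.

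For the second move I would note that $L_\e u=N_\e u=N_{\e,u}u$, so $z=u$ is a classical solution of $N_{\e,u}z=0$, and then verify the hypotheses of Theorem \ref{p402}(ii). The bounds $\|u\|_{C^{1,\alpha}(\O_2)}$, $\|\p_2u\|_{L^p(\O_2)}\le C$ (any $p$, since $\|\p_2u\|_{L^\infty}\le M$) and $\|\nabla_\e^2u\|_{L^2(\O_2)}\le C$ are part of \eqref{stima0}; from the commutators $[\p_2,X_{1,u}]=v\,\p_2$ and $[\p_2,X_{2,u}]=0$ one gets $\p_2X_{1,u}u=X_{1,u}v+v^2$ and $\p_2X_{2,u}u=X_{2,u}v$, hence $|\p_2X_uu|\le|\nabla_\e v|+v^2$, so $\|\p_2X_uu\|_{L^2(\O_2)}\le C$ and, by the first move, $\|\p_2X_uu\|_{L^4(\O_1)}\le C$. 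The condition $\alpha\ge\tfrac14$ is either already met by the exponent in \eqref{stima0} or is secured beforehand by upgrading the H\"older estimate through the embedding $W^{2,10/3}_\e\hookrightarrow C^{1,1/2}$ in the lifted Carnot-Caratheodory metric (again with $\e$-uniform constants). Theorem \ref{p402}(ii) then yields $\|u\|_{W^{2,p}_\e(\O_1)}\le C$ for every $p>1$, and together with the first move this gives the statement.

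The main obstacle is the first move: establishing \eqref{CacciEucl} with a constant independent of $\e$ — which is precisely where the lifting and freezing of Section 3 are needed, since the underlying operator degenerates as $\e\to0$ — and, within its proof, organizing the Young-type splittings so that $\nabla_\e^2u$ never appears above the power $10/3$ while the reappearing $\nabla_\e v$ contributions stay absorbable; the nonlinear terms of \eqref{2nderiv} survive this bookkeeping only because of the a priori bound $\|v\|_{L^\infty}\le M$. A secondary technical point is making the passage to $\alpha\ge\tfrac14$ uniform in $\e$.
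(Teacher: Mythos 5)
Your overall architecture---first a uniform $L^4$ bound on $\nabla_\e v$, then Theorem \ref{p402}(ii) applied to $z=u$---is indeed the paper's, but the first move, which is the heart of the lemma, has a genuine gap. The pivotal tool you invoke, a subelliptic Sobolev embedding $W^{1,2}_\e\hookrightarrow L^{10/3}$ ``for the lifted structure, with constants uniform in $\e$,'' is not available for the actual (non-frozen) vector fields $X_{1,u},X_{2,u}$: the operator is not subelliptic (as stressed in the introduction), and the lifting/freezing machinery of Section 3 produces estimates only through representation formulas for the frozen operators, with the remainder $\mathcal R=(u-P^1_{x_0}u-s^2)\p_2$ controlled via Lemma \ref{d1alpha}; for a generic $W^{1,2}_\e$ function there is no control on its $\p_2$ derivative and hence no such embedding---this is precisely why Theorem \ref{p402} carries hypotheses on $\p_2 z$ and $\p_2X_uz$. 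Moreover, your bookkeeping claim that the second derivatives of $u$ can be arranged to enter only through $\|\nabla_\e^2u\|_{L^{10/3}}$ is not substantiated: testing the differentiated equation for $v$ with $|X_kv|^{p-3}X_kv\,\phi^{2p}$ produces, already for $p=3$, terms like $\int|\nabla_\e^2u|^2|\nabla_\e v|^{2}\phi^6$, which after Young's inequality leaves $\int|\nabla_\e^2u|^4\phi^6$---a quantity \emph{not} controlled by \eqref{stima0}, which only gives $L^{10/3}$. Your proposal never explains how this quartic term is closed, and absorbing $\int|\nabla_\e v|^{p}\phi^{2p}$ ``because $p<p+1/2$'' does not address it.

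The paper closes the loop by a mechanism absent from your proposal: a coupled system of intrinsic Caccioppoli estimates for $v$ and for $z=X_iu$ (yielding \eqref{caccinonlin1} and \eqref{questa}, i.e. bounds by $C_1+C_2\int|\nabla_\e^2u|^4\phi^6$), then the pointwise conversion of Euclidean derivatives of $\nabla_\e z$ into intrinsic second derivatives giving the Euclidean Caccioppoli inequality \eqref{CacciEucl}, then the \emph{Euclidean} Sobolev inequality in $\R^2$ applied to $\nabla_\e z\,\phi^3$, H\"older's inequality
$\int|\nabla_\e z|^4\phi^6\le\bigl(\int(|\nabla_\e z|\phi^3)^6\bigr)^{1/3}\bigl(\int_{\mathrm{supp}\,\phi}|\nabla_\e z|^3\bigr)^{2/3}$,
and finally absorption by taking $\mathrm{supp}\,\phi$ small, which is legitimate exactly because $10/3>3$ and the $L^{10/3}$ bound in \eqref{stima0} is uniform in $\e$. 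This Euclidean Sobolev/small-support absorption is what replaces the nonexistent uniform subelliptic embedding and controls the $|\nabla_\e^2u|^4$ terms; without it (or an equivalent device) your first move does not close. The same Euclidean route ($\nabla_E\nabla_\e u\in L^4_{loc}$ plus Morrey in $\R^2$) is also what secures $\nabla_\e u\in C^{1/2}_E$, hence $\alpha=1/2\ge 1/4$, uniformly in $\e$: \eqref{stima0} alone does not guarantee $\alpha\ge 1/4$, and an embedding $W^{2,10/3}_\e\hookrightarrow C^{1,1/2}$ in the lifted Carnot--Caratheodory metric would in any case deliver H\"older regularity with respect to that metric, not the Euclidean $C^{1,\alpha}$, $\alpha\ge 1/4$, required by Theorem \ref{p402}(ii) and Lemma \ref{d1alpha}.
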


\begin{proof}

In view of Lemma \ref{equ2n} 
the function $v=\p_{2}u$ satisfies the equation: 

$$X_{i}  \Big( \frac{a_{ij}(\nabla_\e u)}{\sqrt{1 +|\nabla_\e u|^2}}
X_{j}v \Big) =f,$$
with 
$$f = -\frac{a_{11}(\nabla_\e
u)}{\sqrt{1 +|\nabla_{\e} u|^2}} v^3 - 3 \frac{a_{1j}(\nabla_\e
u)}{\sqrt{1 +|\nabla_{\e} u|^2}}v X_j v - X_{i} \Big(\frac{a_{i1}(\nabla_\e
u)}{\sqrt{1 +|\nabla_{\e} u|^2}} \Big) v^2$$

Hence, applying Lemma \ref{lemmalteo1} one has

\begin{equation}\int |\nabla_\e ( |\nabla_\e v| ^{(p-1)/2})|^2
\phi^{2p}\leq C \Big(\int \big(|\nabla_\e\phi|^{2}+\phi^2\big)^p
+\end{equation}$$+
 \int (|\nabla_\e v|^{p+1/2}) \phi^{2p} 
+ \int  |X_2( \p_2 u)|^p  \phi^{2p} + \int |f| ^{(2p+1)/7} \phi^{2p} +$$
$$+\int |f|^{(2p+1)/5}(|\nabla_\e \phi| + \phi)^{(2p+1)/5} \phi^{(8p-1)/5} 
+$$
$$+ \int|\nabla^2_\e u| |\nabla_\e v|^{p-1} \phi^{2p} 
+ \int|\nabla^2_\e u|^2 |\nabla_\e v|^{p-1} \phi^{2p} + 
\int|\nabla^2_\e u| |\nabla_\e v|^{p-1} \phi^{2p-1}|\nabla_\e \phi|\Big) .$$

There exist positive constants $C_1 = C_1 (|\nabla_\e \phi|, \phi, M) $ and $C_2=C_2(M)$ such that for $p=3$ we obtain

\begin{equation}\int |\nabla^2_\e v|^2
\phi^{6}\leq C_1 
+
C_2\Big( \int |\nabla_\e v|^{3+1/2} \phi^{6} 
+  \int (1 + |\nabla_\e v| + |\nabla_\e^2 u|)^{7/5}\phi^{23/5} (|\nabla_{\e}\phi| + \phi)^{7/5} +\end{equation}
$$+ \int|\nabla^2_\e u| |\nabla_\e v|^{2} \phi^{6} 
+ \int|\nabla^2_\e u|^2 |\nabla_\e v|^{2} \phi^{6} + 
\int|\nabla^2_\e u| |\nabla_\e v|^{2} \phi^{5}|\nabla_\e \phi|\Big) .$$

It follows that 

\begin{equation}\label{caccinonlin1}\int |\nabla^2_\e v|^2\phi^{6} \leq \frac{C_2}{\delta}\int |\nabla_\e^2u|^4 \phi^{6} + \delta \int |\nabla_\e v|^4 \phi^{6} + \frac{C_1}{\delta}.\end{equation}

Analogously, if we set $z= X_1u$, or $z= X_2u$, 
using Lemma \ref{equXz} and arguing as above 
we have
 \begin{equation}\label{caccinonlin2}\int |\nabla_\e^2 z|^2\phi^6\leq
\frac{C_2}{\delta}\int |\nabla_\e^2u|^4 \phi^6 + \frac{C_1}{\delta} + C_2\int |\nabla_\e v|^3 \phi^6 \end{equation}

Using Lemma \ref{interpinfinity},   \eqref{caccinonlin1} and \eqref{stima0}, we obtain immediately 
$$\int |\nabla_\e v|^4 \phi^6 \leq C_1 +C_2\int |\nabla_\e^2 v|^2 \phi^6 \leq C_1 + \frac{C_2}{\delta}\int |\nabla_\e^2u|^4 \phi^6 + \delta \int |\nabla_\e v|^4 \phi^6 $$
Hence \begin{equation}\label{questa}\int |\nabla_\e v|^4 \phi^6 \leq  
C_1 + C_2\int |\nabla_\e^2u|^4\phi ^6\end{equation}

Consequently, from the latter and \eqref{caccinonlin2} we deduce that  \begin{equation}\label{questa1}\int |\nabla_\e^2 z|^4 \phi^6 \leq   
C_1 + C_2\int |\nabla_\e^2u|^4\phi ^6\end{equation}

Next, from the intrinsic Cacciopoli inequalities \eqref{questa} and \eqref{questa1} we deduce an Euclidean Cacciopoli inequality: 
Note that 
$$|\nabla_E X_1 z|\leq |X_1^2 z | + C_2 |\p_2 X_1 z| 
\leq |X_1^2 z | + C_2 |v  \p_2 z| +  C_2| X_1 \p_2  z|\leq $$
(since $\p_2 z= \p_2 X_1 u = v^2 + X_1 v$)
$$|\nabla_\e ^2 z | + C_2 |\nabla_\e ^2 v| +  C_2|\nabla_\e v| + C_2. $$
From the latter and \eqref{questa} and \eqref{questa1} we infer 
\begin{equation}\label{CacciEucl}
\int |\nabla_E \nabla_\e z|^2 \phi^6 \leq C_2 \Big(\int |\nabla_\e^2 v|^2 \phi^6  + \int |\nabla_\e^2 z|^2 \phi^6 +1\Big) \leq C_2\int |\nabla_\e z|^4 \phi^6 + C_1\end{equation}
Now we can apply the standard Euclidean Sobolev inequality in $\R^2$ and obtain

$$\Big(\int (|\nabla_\e z|\phi^3)^6\Big)^{1/3}
\leq C_2\int |\nabla_E(\nabla_\e z\phi^3)|^2 \leq  C_2\int |\nabla_\e z|^4 \phi^6 +  C_1 \leq$$
(using H\"older inequality )
$$\leq C_2\Big(\int (|\nabla_\e z|\phi^3)^6\Big)^{1/3}\Big(\int_{supp(\phi)} |\nabla_\e z|^3\Big)^{2/3} + C_1.$$
By \eqref{stima0} and the fact that $|\nabla_\e z | \leq |\nabla_\e^2 u|,$ we already know that 
$|\nabla_\e z|\in L^3_{loc}.$ 
In fact $$\Big(\int_{supp(\phi)} |\nabla_\e z|^3\Big)^{2/3} \leq \Big(\int_{supp(\phi)} |\nabla_\e z|^{10/3}\Big)^{3/5} |supp(\phi)|^{1/15}.$$ Recall that $C_2$ doen not depend on $|\nabla_\e \phi|$. 
If we choose the support of $\phi$ sufficiently small, we can assume that the integral  
$\int_{supp(\phi)} |\nabla_\e  z|^3$ is arbitrarily small. 
It follows that 
$$\Big(\int (|\nabla_\e z|\phi^3)^6\Big)^{1/3}\leq C_1$$
and consequently, by (\ref{questa})
$$\int |\nabla_\e v|^4\phi^6 \leq C_1$$
But this implies that
$|\nabla_E(\nabla_\e u)|\leq |\nabla_\e^2 u| + |\nabla_\e v| + v^2 \in L^4_{loc}$. 
This implies, buy the standard Euclidean Sobolev Morrey inequality in $\R^2$ that 
$$\nabla_\e u\in C^{1/2}_E.$$

By Theorem \ref{p402} (ii)
it then follows that for every $r>1$ there exists a constant $C>0$ independent of $\e$ such that 
$$||\nabla_\e ^2 u||_{W^{2, r}}\leq C_1.$$

\end{proof}

In order to boostrap regularity we apply Theorem \ref{teo3} to the non linear equation $L_\e u=0,$
and obtain immediately the following: 

\begin{lemma}\label{5.2primo}
Let $u$ be a smooth solution of $L_\e u =0$, satisfying \eqref{MMMM= }. Set $z=X_iu$, $i=1,2$ 
$v=\p_2 u$. For every open set  $\O_1\subset\subset\O_2\subset\subset \Omega$, 
for every $p\geq 3$, and every integer $m\geq 2$  there exist a constant $C$  which depend on $p,m$  $\O_i$ and on 
$M$ in (\ref{MM}), but is independent of $\e$ such that
 the following estimates
hold

\begin{equation}\label{iterationa}
||z||^{p+1/2}_{W^{m,p+1/2}_\e(\O_1)}+
||v||^{p+1/2}_{W^{m,p+1/2}_\e(\O_1)}
\leq \end{equation}
$$ \leq C\Big( 
 ||v||^{4p+2}_{W^{m-1,4p+2}_\e(\O_2)} +  ||z||^{4p+2}_{W^{m-1,4p+2}_\e(\O_2)} +
  ||v||^{2p/3}_{W^{m, 2p/3}_\e(\O_2)}
+ ||z||^{2p/3}_{W^{m, 2p/3}_\e(\O_2)} +1\Big). $$

\begin{equation}\label{iterationb}
||z||^{2}_{W^{m+1,2}_\e(\O_1)}+
||v||^{2}_{W^{m+1,2}_\e(\O_1)}
\leq \end{equation}
$$ \leq C\Big( 
 ||v||^{14}_{W^{m-1,14}_\e(\O_2)} +  ||z||^{14}_{W^{m-1,14}_\e(\O_2)} +
  ||v||^{2}_{W^{m, 2}_\e(\O_2)}
+ ||z||^{2}_{W^{m, 2}_\e(\O_2)} +1\Big). $$

\end{lemma}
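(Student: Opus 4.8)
The statement is obtained by feeding the equation $L_\e u=0$ into Theorem \ref{teo3}. First I would record, via Lemma \ref{equXz} with $k=1$ and Lemma \ref{equ2n}, that the functions $z=X_1u$ and $v=\p_2u$ are each smooth solutions of an equation of the form \eqref{eq111}: $M_\e z=f$ with $f$ the right-hand side of \eqref{PDEXu}, and $M_\e v=g$ with $g$ the right-hand side of \eqref{2nderiv}. Expanding the commutators (each $[X_i,X_j]$ is a constant multiple of $vX_2$, or zero) and the divergence-form terms, one sees that $f$ and $g$ are finite sums of products, each the product of a bounded smooth function of $\nabla_\e u$ with a monomial in $v$, $\nabla_\e v$ and $\nabla_\e^2u$ of degree at most three; in particular both are smooth. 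The plan is then to apply Theorem \ref{teo3} with its index "$m$" replaced by "$m-1$" (admissible since $m\ge2$) to each of these two solutions --- with general $p\ge3$ for \eqref{iterationa}, and with $p=3$ for \eqref{iterationb} --- and to rewrite the resulting right-hand side in the claimed form. The remaining case $z=X_2u$ of the claim is then immediate, since $X_2u=\e v$ with $\e$ small, so its norms are dominated by those of $v$.

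After this application, three terms on the right of Theorem \ref{teo3} are not yet of the desired shape: $||f||_{W^{m-1,(2p+1)/5}_\e(\O_2)}$, $||u||_{W^{m,4p+2}_\e(\O_2)}$ and $||u||_{W^{m+1,2p/3}_\e(\O_2)}$. For the two $u$-terms I would use that $\nabla_\e^Iu=\nabla_\e^{I'}(X_ju)$ whenever $|I|\ge1$, with $|I'|=|I|-1$, together with $X_2u=\e v$ and $||u||_{L^\infty}\le M$, which gives for every bounded open $\O'$ and all $k\ge1$, $q\ge1$,
\[
||u||_{W^{k,q}_\e(\O')}\le C\big(1+||z||_{W^{k-1,q}_\e(\O')}+||v||_{W^{k-1,q}_\e(\O')}\big),
\]
so that $||u||_{W^{m,4p+2}_\e(\O_2)}$ and $||u||_{W^{m+1,2p/3}_\e(\O_2)}$ are dominated by norms of $z$ and $v$ already appearing in the claimed right-hand side. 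For the $f$-term I would apply Leibniz to the explicit formulas for $f$ and $g$: every $X_i$-derivative falling on a coefficient $a_{ij}(\nabla_\e u)/\sqrt{1+|\nabla_\e u|^2}$ produces a bounded smooth factor times $\nabla_\e^2u=\nabla_\e z$, so that $\nabla_\e^If$ with $|I|\le m-1$ is a finite sum of products of intrinsic derivatives of $u$ and $v$ in which at most one factor has top order $m$ (an intrinsic derivative of $z$ or of $v$) and every remaining factor has order $\le m-1$. Estimating in $L^{(2p+1)/5}(\O_2)$ by H\"older's inequality --- placing the top-order factor in $L^{2p/3}$ and the rest in $L^{4p+2}$, and distributing the exponent $(2p+1)/5$ exactly as in the proof of Theorem \ref{teo2} --- I obtain
\[
||f||^{(2p+1)/5}_{W^{m-1,(2p+1)/5}_\e(\O_2)}\le C\big(1+||z||^{4p+2}_{W^{m-1,4p+2}_\e(\O_2)}+||v||^{4p+2}_{W^{m-1,4p+2}_\e(\O_2)}+||z||^{2p/3}_{W^{m,2p/3}_\e(\O_2)}+||v||^{2p/3}_{W^{m,2p/3}_\e(\O_2)}\big),
\]
and likewise for $g$.

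Substituting these bounds into Theorem \ref{teo3} and adding the inequalities for $z=X_1u$ and for $v$, one gets \eqref{iterationa} upon discarding the nonnegative second term on the left. For \eqref{iterationb} (the case $p=3$, so $(p-1)/2=1$, $4p+2=14$, $2p/3=2$) I would instead retain that second term: since $|\nabla_\e(|g|)|=|\nabla_\e g|$ a.e.\ for scalar $g$, the quantity $\sum_{|I|=m}||\,|\nabla_\e^Iz|\,||^2_{W^{1,2}_\e(\O_1)}$ equals $\sum_{|I|=m}\big(||\nabla_\e^Iz||^2_{L^2(\O_1)}+||\nabla_\e\nabla_\e^Iz||^2_{L^2(\O_1)}\big)$, which, combined with the term $||z||^{7/2}_{W^{m,7/2}_\e(\O_1)}$ on the left, controls $||z||^2_{W^{m+1,2}_\e(\O_1)}$ on the bounded set $\O_1$ (using that the claimed right-hand side may be taken $\ge1$); the same applies with $v$ in place of $z$.

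The one delicate point is the bound on $||f||_{W^{m-1,(2p+1)/5}_\e}$: one must check that Leibniz never produces a product carrying two factors of top order $m$ (which would exceed the control afforded by the stated right-hand side), and that the exponent arithmetic in the H\"older split accommodates the high powers of first-order intrinsic derivatives generated by repeatedly differentiating the coefficients --- these are absorbed by Gagliardo--Nirenberg interpolation, which for $p\ge3$ leaves room to spare. This is exactly the bookkeeping already carried out in the proof of Theorem \ref{teo2}, now with $m-1$ derivatives in place of one, which is why the lemma follows at once from Theorem \ref{teo3}.
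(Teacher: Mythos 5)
Your proposal is correct and follows essentially the same route as the paper: identify $z$ and $v$ as solutions of \eqref{eq111} with right-hand sides $f_z$, $f_v$ of the form \eqref{effez}--\eqref{effev}, apply Theorem \ref{teo3} with the index shifted to $m-1$, absorb the $u$-norms through derivatives of $z$ and $v$, and obtain \eqref{iterationb} by taking $p=3$ and retaining the $\sum_{|I|=m}\||\nabla_\e^I\cdot|\|^2_{W^{1,2}_\e}$ term on the left. The only difference is presentational: you carry out explicitly the Leibniz/H\"older bookkeeping for $\|f\|_{W^{m-1,(2p+1)/5}_\e}$, which the paper compresses into the pointwise bound \eqref{fvfz} and the phrase ``substituting \eqref{fvfz} in the latter.''
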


\begin{proof}

In view of Lemma \ref{equ2n}
the function $v$ solves an equation of the form $$M_\e v = f_v,$$
with
\begin{equation} \label{effev}f_v = -\frac{a_{11}(\nabla_\e
u)}{\sqrt{1 +|\nabla_{\e} u|^2}} v^3 - 3 \frac{a_{1j}(\nabla_\e
u)}{\sqrt{1 +|\nabla_{\e} u|^2}}v X_j v - X_{i} \Big(\frac{a_{i1}(\nabla_\e
u)}{\sqrt{1 +|\nabla_{\e} u|^2}} \Big) v^2.\end{equation}

Analogously, the function $z= X_i u$ solves the equation $$M_\e z = f_z,$$ of the form 
with
\begin{equation} \label{effez}f_z =-[X_{k}, X_{i}] \Big(\frac{X_{i}u}{\sqrt{1 +
|\nabla_{\e} u |^2}}\Big) - X_{i} \Big(\frac{a_{ij}(\nabla_\e
u)}{\sqrt{1 +|\nabla_{\e} u|^2}} [X_{k}, X_{j}] u\Big).\end{equation}

Hence 
\begin{equation}\label{fvfz}|f_v| + |f_z| \leq C (1+  |\nabla_\e v| + \nabla_\e z|),
\end{equation}
for some constant $C$ depending only on $M$ in \eqref{MM}.

Applying Theorem \ref{teo3} to $z$ and to $v$, 
yields:

$$||z||^{p+1/2}_{W^{m,p+1/2}_\e(\O_1)}
+ ||v||^{p+1/2}_{W^{m,p+1/2}_\e(\O_1)}+
\sum_{|I|=m} |||\nabla_\e^I z| ^{(p-1)/2} ||^2_{W^{1,2}_\e(\O_1)}+
\sum_{|I|=m} |||\nabla_\e^I v| ^{(p-1)/2} ||^2_{W^{1,2}_\e(\O_1)}\leq $$
$$ \leq C\Big( ||f_z||^{(2p+1)/5}_{W^{m-1, (2p+1)/5}_\e(\O_2)} +
||f_v||^{(2p+1)/5}_{W^{m-1, (2p+1)/5}_\e(\O_2)} +
$$$$+
 ||v||^{4p+2}_{W^{m-1,4p+2}_\e(\O_2)} +  ||u||^{4p+2}_{W^{m ,4p+2}_\e(\O_2)} +
 ||u||^{2p/3}_{W^{m+1, 2p/3}_\e(\O_2)} + $$$$+ 
||z||^{4p+2}_{W^{m-1,4p+2}_\e(\O_2)} +
 ||z||^{2p/3}_{W^{m, 2p/3}_\e(\O_2)} + 
||v||^{4p+2}_{W^{m-1,4p+2}_\e(\O_2)} +
 ||v||^{2p/3}_{W^{m, 2p/3}_\e(\O_2)} + 1\Big)\leq $$
(substituting \eqref{fvfz} in the latter)
$$ \leq C\Big( ||\nabla_\e z||^{(2p+1)/5}_{W^{m-1, (2p+1)/5}_\e(\O_2)} +
||\nabla_\e v||^{(2p+1)/5}_{W^{m-1, (2p+1)/5}_\e(\O_2)} +
$$$$+
||z||^{4p+2}_{W^{m-1,4p+2}_\e(\O_2)} +
 ||z||^{2p/3}_{W^{m, 2p/3}_\e(\O_2)} + 
||v||^{4p+2}_{W^{m-1,4p+2}_\e(\O_2)} +
 ||v||^{2p/3}_{W^{m, 2p/3}_\e(\O_2)} + 1\Big)\leq $$
(using H\"older inequality)
$$\leq C\Big(
||z||^{4p+2}_{W^{m-1,4p+2}_\e(\O_2)} +
 ||z||^{2p/3}_{W^{m, 2p/3}_\e(\O_2)} + 
||v||^{4p+2}_{W^{m-1,4p+2}_\e(\O_2)} +
 ||v||^{2p/3}_{W^{m, 2p/3}_\e(\O_2)} + 1\Big). $$

We have proved \eqref{iterationa} and 
substituting $p=3$ yields  \eqref{iterationb} 

\end{proof}

The main result of this section is the following a priory regularity estimates for solutions of the approximating non linear equation:

\begin{theorem}\label{prop32}
Let $u$ be a smooth solution of 
$$L_\e u=0,$$ 
in $\Om\subset \R^2$,  satisfying \eqref{MMMM= }. For every open set  $\O_1\subset\subset \Omega$, 
for every $p\geq 3$, and every integer $m\geq 2$  there exist a constant $C$  which depends on $p,m$  $\O_1$ and on 
$M$ in (\ref{MM}), but is independent of $\e$ such that the following estimates holds

\begin{equation}\label{star}
||u||_{W^{m,p}_\e(\O_1)}+  
||\p_2 u||_{W^{m,p}_\e(\O_1)} \leq
C.\end{equation}

\end{theorem}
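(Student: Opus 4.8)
The plan is to establish \eqref{star} by induction on $m$, with Lemma~\ref{unormaw2p} furnishing the start and the two a priori inequalities \eqref{iterationa}--\eqref{iterationb} of Lemma~\ref{5.2primo} driving the iteration. It is convenient to run the induction on the pair $(z,v)$ with $z=X_iu$ ($i=1,2$) and $v=\p_2u$: for $n\ge 1$ let $(\star_n)$ assert that for every $p\ge 1$ and every $\O_1\subset\subset\Omega$ there is $C=C(n,p,\O_1,M)$, independent of $\e$, with $\|z\|_{W^{n,p}_\e(\O_1)}+\|v\|_{W^{n,p}_\e(\O_1)}\le C$. Since $\nabla_\e=(X_{1,u},\e\p_2)$ and $X_{1,u}=\p_1+u\,\p_2$ with $u\in C^{1,\alpha}_E(\O_1)$ uniformly in $\e$ (Lemma~\ref{unormaw2p}), one has $\|u\|_{W^{m,p}_\e}+\|\p_2u\|_{W^{m,p}_\e}\lesssim 1+\|z\|_{W^{m-1,p}_\e}+\|v\|_{W^{m-1,p}_\e}$ up to terms of lower differential order, already controlled by the previous levels; so $(\star_n)$ for all $n$ yields \eqref{star}.

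For the base I would use Lemma~\ref{unormaw2p} together with its proof: these give $\|u\|_{W^{2,p}_\e(\O_1)}\le C$ for all $p$, $\nabla_\e v\in L^4_\loc$, $\nabla^2_\e v\in L^2_\loc$ (via \eqref{caccinonlin1}) and $\nabla^2_\e z\in L^4_\loc$, all uniformly in $\e$; applying the Caccioppoli chain of Theorems~\ref{teo1} and~\ref{teo2} to the equation $M_\e v=f_v$ satisfied by $v$ (with $|f_v|\le C(1+|\nabla_\e z|+|\nabla_\e v|)$ by \eqref{fvfz}) then upgrades $v$ to $W^{2,p}_\e$ for all $p$, so that $(\star_1)$ and $(\star_2)$ hold.

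The inductive step is the heart of the matter. Assume $(\star_k)$ for all $1\le k\le m$; I would derive $(\star_{m+1})$ in two moves. First, \eqref{iterationb} applied with its parameter equal to $m$ gains one derivative at integrability exponent $2$: its right-hand side is controlled by $W^{m-1,14}_\e$ norms of $z,v$ (from $(\star_{m-1})$) and $W^{m,2}_\e$ norms of $z,v$ (from $(\star_m)$), giving $\|z\|_{W^{m+1,2}_\e(\O_1)}+\|v\|_{W^{m+1,2}_\e(\O_1)}\le C$. Second, I would raise the integrability at the new level $m+1$ by iterating \eqref{iterationa} with parameter $m+1$ on a compact exhaustion $\O_1\subset\subset\O_2\subset\subset\Omega$: there the right-hand side involves $W^{m,4p+2}_\e$ norms of $z,v$, bounded for every $p$ by $(\star_m)$, together with $W^{m+1,2p/3}_\e$ norms of $z,v$, which live at the same derivative level $m+1$ but at the strictly smaller exponent $2p/3<p+\tfrac12$. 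Starting from exponent $2$ and matching $\tfrac{2p}{3}$ to the exponent currently controlled, each application improves the exponent $q$ to $\tfrac{3q}{2}+\tfrac12$; this map has no finite fixed point, so after finitely many steps (shrinking the subdomain slightly each time, which costs nothing) one reaches any $p\ge 3$, hence any $p\ge 1$. This is $(\star_{m+1})$, and the induction closes, proving \eqref{star}.

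I expect the main obstacle to be the bookkeeping rather than any single estimate: because $X_{1,u}$ carries the only-$C^{1,\alpha}$ coefficient $u$, converting between intrinsic Sobolev norms of $u$, of $X_iu$ and of $\p_2u$ — and differentiating the coefficients of $X_{1,u}$ when invoking Lemma~\ref{5.2primo} — generates extra terms of lower differential order but possibly high integrability, all of which must be reabsorbed via the already-established levels $(\star_j)$, $j\le m$, and Hölder's inequality, exactly as in the product estimates in the proof of Theorem~\ref{teo2}. One must also check that the two previous levels $(\star_{m-1}),(\star_m)$ are genuinely available (so the base case must deliver $(\star_1)$ and $(\star_2)$ for \emph{all} $p$, not merely for $p=4$), and that the finitely many exponent-improvement steps indeed close on a fixed compact subset of $\Omega$.
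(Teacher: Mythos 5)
Your proposal is correct and follows essentially the same route as the paper: an induction on the derivative level with Lemma \ref{unormaw2p} (plus Theorem \ref{teo1}) as the base and the estimates \eqref{iterationa}--\eqref{iterationb} of Lemma \ref{5.2primo} driving the step, with the integrability exponent raised by the bootstrap $q\mapsto \tfrac{3q}{2}+\tfrac12$. The only differences are cosmetic: the paper's inductive hypothesis \eqref{inductive} packages ``all $p$ at level $m-1$ plus exponent $2$ at level $m$'' and applies \eqref{iterationa} before \eqref{iterationb} in each cycle, whereas you carry ``all $p$'' at every level and swap the order of the two moves.
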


\begin{proof}

The proof follows from the estimate 
\begin{equation}\label{inductive}
||X_1 u||_{W^{m-1,p}_\e(\O_1)}+  
||\p_2 u||_{W^{m-1,p}_\e(\O_1)} +
||X_1 u||_{W^{m,2}_\e(\O_1)}+  
||\p_2 u||_{W^{m,2}_\e(\O_1)} 
\leq
C,\end{equation}
which we prove by induction.

\medskip 

\noindent {\bf First step: $m=2$}

By Lemma \ref{unormaw2p} we already know that there exists a constant 
such that for every $p$ 
\begin{equation}\label{2derivu}
 ||u||^p_{W^{2,p}_\e(\Omega_1)} \leq C_1.\end{equation}
 We need to show that $v\in W^{1,p}_{\e loc}$ for every $p$, and that $X_1 u, v \in W^{2,2}_{\e loc}$. 
 
 Note that we can not yet invoke Lemma \ref{5.2primo}, since it only apply to higher order derivatives. 
 
Recall that $v$ is a solution of $M_\e v = f_v,$
where $f_v$ is defined in \eqref{effev}
By Theorem \ref{teo1} there exist a constant $C$ 
such that 

$$\int |\nabla_\e v|^{p+1/2}\phi^{2p} + \int |\nabla_\e
\big(|\nabla_\e v|^{(p-1)/2}\big)|^{2}\phi^{2p}\leq
$$$$ \leq
C\Big(\int |v|^{4p+2} \phi^{2p} +  \int \big( \phi^2 + |\nabla_\e
\phi|^2\big)^p + \int |\nabla_\e( \p_2 u)|^p \phi^{2p}+ $$
$$+  \int \big(|f_v|^{(2p+1)/7} 
 \phi^{2p} +  |f_v|^{(2p+1)/5} 
 (|\nabla_\e\phi|+\phi)^{(2p+1)/5} \phi^{(8p-1)/5}\big) + \int|\nabla^2_\e u|^{2p} 
\phi^{2p}\Big)\leq $$
(using \eqref{fvfz}, \eqref{stima0} and \eqref{2derivu} )
$$\leq 
C\Big( 1 + \int |\nabla_\e v|^{(2p+1)/5} (|\nabla_\e\phi|+\phi)^{(2p+1)/5} \phi^{(8p-1)/5} + \int |\nabla_\e( \p_2 u)|^p \phi^{2p}\Big) \leq $$$$\leq
C\Big( 1 + \int  |\nabla_\e v|^p (|\nabla_\e\phi|+\phi)^{(2p+1)/5} \phi^{(8p-1)/5} \Big). $$
From Lemma \ref{unormaw2p} the right hand side 
is bounded for $p=4.$ An obvious boostrap argument yields $\nabla_\e v\in L^p_{loc}$ for every $p$. 
Moreover, choosing $p=3$, we also infer $$\nabla_\e^2 v\in L^2_{loc}.$$

\smallskip

To conclude the first iteration step we observe that 
the function $z= X_1 u$ solves $M_\e z = f_z,$ where
$f_z$ is defined in \eqref{effez}. Theorem \ref{teo1}
and estimate \eqref{fvfz} yield that there exists a constant $C$ 
such that 
$$\int |\nabla_\e z|^{p+1/2}\phi^{2p} + \int |\nabla_\e
\big(|\nabla_\e z|^{(p-1)/2}\big)|^{2}\phi^{2p} \leq C.$$ 
Choosing $p=3$ we obtain $X_1 u \in W^{2,2}_{\e loc}$

\medskip 

\noindent {\bf Main iteration  step: $m>2$}

Assume \eqref{inductive} holds for for a fixed value of $m$.

  Let $\Omega_2$ be as in Lemma \ref{5.2primo}. In view of that result we infer

\begin{equation} ||z||^{p+1/2}_{W^{m,p+1/2}_\e(\O_1)}+
||v||^{p+1/2}_{W^{m,p+1/2}_\e(\O_1)}
\leq \end{equation}
$$ \leq C\Big( 
 ||v||^{4p+2}_{W^{m-1,4p+2}_\e(\O_2)} +  ||z||^{4p+2}_{W^{m-1,4p+2}_\e(\O_2)} +
  ||v||^{2p/3}_{W^{m, 2p/3}_\e(\O_2)}
+ ||z||^{2p/3}_{W^{m, 2p/3}_\e(\O_2)} +1\Big)\leq$$
by induction assumption
$$\leq C\Big( 
  ||v||^{2p/3}_{W^{m, 2p/3}_\e(\O_2)}
+ ||z||^{2p/3}_{W^{m, 2p/3}_\e(\O_2)} +1\Big).$$

The same boostrap argument used above implies $v, z \in W^{m,p}_{\e loc}$ for every $p$. 

Invoking \eqref{iterationb}
\begin{equation}
||z||^{2}_{W^{m+1,2}_\e(\O_1)}+
||v||^{2}_{W^{m+1,2}_\e(\O_1)}
\leq \end{equation}
$$ \leq C\Big( 
 ||v||^{14}_{W^{m-1,14}_\e(\O_2)} +  ||z||^{14}_{W^{m-1,14}_\e(\O_2)} +
  ||v||^{2}_{W^{m, 2}_\e(\O_2)}
+ ||z||^{2}_{W^{m, 2}_\e(\O_2)} +1\Big)\leq C,$$
  concluding the proof. 
\end{proof}

%
%
%

\section{Estimates for the viscosity solution}
In this section we turn our attention to the proof of regularity
for  vanishing  viscosity solutions $u$
of equation (\ref{eq0bis}). The regularity is expressed in terms of the intrinsic  Sobolev spaces
 $W^{k,p}_0(\O)$ and rests on   the {\it a priori} estimates proved
 in the previous section in the limit $\e\to 0$.

Let $u$ be a vanishing  viscosity solution, and ($u_j$) denote its
approximating sequence, as defined in Definition \ref{visc}. For
each $\e_j$ and  function $u_j$ we set $X_{1,j}=\p_1+u_j\p_{x_2}$, 
$X_{2,j}=\e_j \p_{x_2}$ the
corresponding vector fields, and let $\nabla_{\e_j}$ and $W^{k,p}_{\e_j}(\O)$ denote
the natural gradient and Sobolev spaces. We also let $u$ , $X_1=\p_1+u\p_2$, and $\nabla_0=(X_1,0)$
denote the coefficients and vector fields associated to the limit
equation  and the limit solution $u$, while $W^{k, p}_0(\O)$ will be the associated
Sobolev space. Note that $\nabla_E$ and $W^{k,p}_E(\O)$ are the usual
gradient and Sobolev space.
\bigskip

\begin{theorem}\label{CiMo} Let $u \in Lip(\Omega)$
be a vanishing  viscosity solution of {\rm(\ref{eq0bis})}, and set $v_j =
\p_2u_j$. For every ball $B(R) \subset \subset \O$ and $p>1$ there
exists a constant $C>0$ such that
\begin{equation}\label{eqde} ||\nabla_{\e_j} u_j||_{W^{1,p}_E(B(R))} +
|| v_j||_{L^\infty(B(R))}+ ||v_j ||_{W^{1,2}_{\e_j}(B(R))} \leq
C\end{equation} and
\begin{equation}\label{xj}X_{1,j} u_j\rightarrow Xu, \quad X_{2,j} u_j
 \rightarrow 0
\end{equation} as $j \rightarrow +\infty$ weakly in $W^{1,2}_{E, \loc}(\Omega)$. Moreover
equation (\ref{eq0bis}) can be represented as $$X^2u =0$$ and is satisfied
 weakly in the Sobolev sense, and hence, pointwise a.e. in $ \O$, i.e.
$$\int_\O XuX^*\phi =0 \text{ for all }\phi\in C^{\infty}_0(\O).$$
 \end{theorem}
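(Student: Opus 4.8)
The plan is to derive Theorem \ref{CiMo} from the $\e$--uniform a priori estimates of the previous sections applied to the approximating sequence, together with a compactness argument. First I would note that by Definition \ref{visc} each $u_j\in C^\infty(\Omega)$ solves $L_{\e_j}u_j=0$ and $\{u_j\}$ is bounded in $Lip(\Omega)$; since $X_{1,j}u_j=\p_1u_j+u_j\p_2u_j$ and $X_{2,j}u_j=\e_j\p_2u_j$ (and we may take $\e_j\le1$), the constant $M$ in \eqref{MM} associated with $u_j$ is bounded uniformly in $j$ on every $B(R)\subset\subset\Omega$. Hence Proposition \ref{Calphaestimate}, Lemma \ref{unormaw2p} and Theorem \ref{prop32} apply to each $u_j$ with constants independent of $j$, yielding $\|u_j\|_{W^{m,p}_{\e_j}(B(R))}+\|\p_2u_j\|_{W^{m,p}_{\e_j}(B(R))}\le C$ for all $m\ge2$, $p\ge3$. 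Arguing as in the proof of Lemma \ref{unormaw2p} --- using the identities $\p_2(X_{1,j}u_j)=X_{1,j}v_j+v_j^2$, $\p_2(X_{2,j}u_j)=X_{2,j}v_j$ and $\|v_j\|_{L^\infty(B(R))}\le\|u_j\|_{Lip}$ --- these intrinsic bounds convert into the Euclidean bound $\|\nabla_{\e_j}u_j\|_{W^{1,p}_E(B(R))}\le C$ for every $p>1$, and together with Proposition \ref{Calphaestimate} (for the $W^{1,2}_{\e_j}$ bound on $v_j$) this establishes \eqref{eqde}.

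Next I would pass to the limit. Choosing $p>2$ in \eqref{eqde}, Morrey's embedding makes $\{\nabla_{\e_j}u_j\}$ precompact in $C^0_{\loc}(\Omega)$ and, via the $W^{1,2}_E$ bound, weakly precompact in $W^{1,2}_{E,\loc}(\Omega)$. To identify the limit I would use that $u_j\to u$ uniformly on compacta, hence $\p_1u_j\to\p_1u$ and $u_j\p_2u_j=\tfrac12\p_2(u_j^2)\to\tfrac12\p_2(u^2)=u\p_2u$ in $\mathcal D'(\Omega)$, so $X_{1,j}u_j\to Xu$ in $\mathcal D'$, while $X_{2,j}u_j=\e_jv_j\to0$ because $\e_j\to0$ and $\|v_j\|_{L^\infty}\le C$. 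Since every subsequential limit of $\nabla_{\e_j}u_j$ must coincide with $(Xu,0)$, the full sequence converges, which is \eqref{xj}; in particular $Xu\in W^{1,2}_{E,\loc}(\Omega)$.

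For the equation I would test $L_{\e_j}u_j=0$ against $\phi\in C^\infty_0(\Omega)$ and integrate by parts (using $X_{1,j}^*=-X_{1,j}-\p_2u_j$, $X_{2,j}^*=-X_{2,j}$), obtaining $\int_\Omega p_{1,j}\,X_{1,j}^*\phi+\int_\Omega p_{2,j}\,X_{2,j}^*\phi=0$ with $p_{i,j}=X_{i,j}u_j/\sqrt{1+|\nabla_{\e_j}u_j|^2}$. Letting $j\to\infty$: $p_{1,j}\to p:=Xu/\sqrt{1+|Xu|^2}$ and $p_{2,j}\to0$ uniformly on compacta, $X_{1,j}\phi=\p_1\phi+u_j\p_2\phi\to X\phi$ and $X_{2,j}\phi=\e_j\p_2\phi\to0$ uniformly, and $\p_2u_j$, being bounded in $L^\infty$ and $\mathcal D'$--convergent to $\p_2u$, converges weak-$*$ in $L^\infty_{\loc}$; so each term passes to the limit and $\int_\Omega p\,X^*\phi=0$ for all $\phi$, with $X^*\phi=-X\phi-(\p_2u)\phi$. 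Finally, since $Xu\in W^{1,2}_{E,\loc}(\Omega)\cap L^\infty$ and $g(t)=t/\sqrt{1+t^2}$ is $C^1$ with $g(0)=0$, $g'(t)=(1+t^2)^{-3/2}>0$, I would invoke the Sobolev chain rule (valid since the coefficient of $X=\p_1+u\p_2$ is Lipschitz) to write $p=g(Xu)\in W^{1,2}_{E,\loc}(\Omega)$ and $Xp=g'(Xu)\,X^2u\in L^2_{\loc}(\Omega)$; integrating by parts in $\int_\Omega p\,X^*\phi=0$ then gives $Xp=0$ a.e., hence $g'(Xu)X^2u=0$, hence $X^2u=0$ a.e., and reversing the integration by parts produces $\int_\Omega Xu\,X^*\phi=0$ for all $\phi\in C^\infty_0(\Omega)$.

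The step I expect to be the main obstacle is the last one: one must already know $Xu\in W^{1,2}_{E,\loc}$ (so that $X^2u$ is meaningful) and must carry out the chain rule and both integrations by parts for the vector field $X=\p_1+u\p_2$ whose coefficient is merely Lipschitz rather than smooth --- this is precisely what lets $X_{1,u}\big(X_{1,u}u/\sqrt{1+|X_{1,u}u|^2}\big)=0$ collapse to $X^2u=0$. The only other delicate point in the limit procedure is the absence of strong convergence of $\p_2u_j$, which I would circumvent with weak-$*$ $L^\infty$ convergence; beyond this, everything rests on the uniform estimates of the earlier sections, where the substantive work (lifting, freezing, and the $\e$--uniform fundamental-solution bounds of \cite{CiMa-F}) has already been done.
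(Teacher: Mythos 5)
Your proposal is correct, and the first two thirds of it (uniform bounds on $M$, application of Proposition \ref{Calphaestimate}, Lemma \ref{unormaw2p} and Theorem \ref{prop32}, conversion of the intrinsic bounds into the Euclidean bound \eqref{eqde} via the commutation identity $\p_2 X_{1,j}w=X_{1,j}\p_2 w+\p_2u_j\,\p_2 w$, and the identification of the weak limits in \eqref{xj} through distributional arguments) is essentially the paper's own argument. Where you genuinely diverge is in the passage to the limit in the equation. The paper exploits the smoothness of each $u_j$ and the uniform second-order estimates to pass to the limit directly in the \emph{non-divergence} form $\sum_{h,k}a_{hk}(\nabla_{\e_j}u_j)X_{h,j}X_{k,j}u_j=0$: the coefficients $a_{hk}(\nabla_{\e_j}u_j)$ converge strongly (locally uniformly, since $\nabla_{\e_j}u_j\to(Xu,0)$ in $C^0_{\loc}$), the second derivatives $X_{h,j}X_{k,j}u_j$ converge weakly because $X_{h,j}u_j\to Xu$ (resp.\ $0$) weakly in $W^{1,2}_{E,\loc}$ and $u_j\to u$ uniformly, and in the limit only the term $a_{11}(\nabla_0u)\,X^2u$ survives with $a_{11}(\nabla_0u)=(1+|Xu|^2)^{-1}>0$, so $X^2u=0$ appears at once, with no need to differentiate the nonlinearity. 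You instead pass to the limit in the \emph{divergence} form, obtaining $\int_\O p\,X^*\phi=0$ for the flux $p=Xu/\sqrt{1+|Xu|^2}$, and then strip off the nonlinearity by the Sobolev chain rule ($Xu\in W^{1,2}_{E,\loc}\cap L^\infty$, $g(t)=t/\sqrt{1+t^2}$) together with two integrations by parts for the Lipschitz-coefficient field $X=\p_1+u\p_2$; all of these steps are legitimate (the chain rule and the identity $\int(Xp)\phi=\int p\,X^*\phi$ hold for $p\in W^{1,2}_{E,\loc}$ and $u$ Lipschitz), and the weak-$*$ $L^\infty$ handling of $\p_2u_j$ is the right fix for the lack of strong convergence. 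The trade-off: your route only uses the second-order information to know $Xu\in W^{1,2}_{E,\loc}$ and is otherwise a first-order (divergence-structure) limit argument, at the cost of the extra chain-rule/de-nonlinearization step you correctly flag as the delicate point; the paper's route is shorter precisely because the uniform $W^{2,p}_{\e}$ estimates are already available, so $X^2u$ can be produced directly as a weak limit.
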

 \begin{proof} The uniform bound on $||v_j||_{L^\infty(B(R))}$
 follows from the definition of vanishing viscosity solution.
 The bound on $||v_j ||_{W^{1,2}_{\e_j}(B(R))}$ is a consequence
 of \eqref{star}.  To prove the remaining estimate observe that
 for any function $w$: $\p_2 X_{1,j} w=X_{1,j} \p_2 w+\p_2 u_j \p_2 w$.
 Substituting $w=u_j$ and
in view of \eqref{star} 
we see that there exists positive constants
$C_1,C_2$ depending only on the uniform bound on $||v_j||_{L^\infty(B(R))}$ such that for any $p\ge 1$,
\begin{multline}
||\p_1 \nabla_{\e_j} u_j||_{L^p(B(R)}+||\p_2 \nabla_{\e_j} u_j||_{L^p(B(R)}\\
\le ||X_{1,j} \nabla_{\e_j}u_j||_{L^p(B(R)}+||(1+|u_j|)\p_2 \nabla_{\e_j} u_j||_{L^p(B(R)}
\\
\le
 ||u_j||_{W^{2,p}_{\e}(B(R))} +C_1||v_j||_{W^{1,p}_{\e}(B(R))}+C_2\le  C,
\end{multline} for a new constant $C>0$ independent of $j$.
 The weak regularity of
$u$ and the weak Sobolev convergence follow in a standard fashion. 

Next we address the PDE:
Since for every $j$ the approximating solution $u_j$ is of class $C^\infty$ then we can  use the non divergence form of the equation 
$$
\sum_{h,k=1}^{2} a_{h k}(\nabla_j u_j) X_{h, j}X_{k,
j}u_j=0.$$
Here 
$$a_{h,k}(\nabla_j u_j) \rightarrow a_{h,k}(\nabla_0 u) = \delta_{h 1} \delta_{k1} \text{  in  } L^p,$$
while 

\begin{equation}X_{1,j} u_j\rightarrow Xu, \quad X_{2,j} u_j
 \rightarrow 0
\end{equation} as $j \rightarrow +\infty$ weakly in $W^{1,2}_{loc}(\Omega)$. 
Hence letting $j$ go to $\infty$ in the non divergence form equation we conclude
 $$X^2u =0$$ in the Sobolev sense. 
 \end{proof}

\bigskip

An analogous result holds for  higher order derivatives:

\begin{proposition}\label{prop41}
For every $k\in N$ for every $p>1$ and for every multiindex $I$ of
length $k$, the sequence $(\nabla_{\e_j}^I u_j)$ is bounded in
$W^{1,p}_{E,\loc}(\Omega)$. Moreover
$$X_{1,j}^k u_j \rightarrow X^k u, \text{ and }X_{2,j}^k u_j\rightarrow 0$$
weakly in $W^{1,p}_E(\O)$ as $j\to \infty$. We will express this convergence in the notation
$$ \nabla_{\e_j}^I u_j \rightarrow
D_0^I u \;\; as \;\; j \rightarrow +\infty, \;\; weakly \;\;in
\;\; W^{1,p}_E(\O).$$
\end{proposition}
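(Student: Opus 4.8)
The plan is to deduce everything from the uniform a priori estimates of Theorem \ref{prop32}. First I would check that the hypotheses of that theorem hold for each $u_j$ with constants independent of $j$: each $u_j$ is $C^\infty$, solves $L_{\e_j}u_j=0$, and by Definition \ref{visc}(ii) the sequence $(u_j)$ is bounded in $Lip(\Omega)$, so $\|u_j\|_{L^\infty}$, $\|\nabla_E u_j\|_{L^\infty}$, and hence $\|\nabla_{\e_j}u_j\|_{L^\infty}$ and $\|\p_2 u_j\|_{L^\infty}$, are bounded uniformly in $j$; i.e.\ \eqref{MMMM= } holds with one constant $M$. Consequently, writing $v_j=\p_2 u_j$, Theorem \ref{prop32} gives: for every $\O_1\subset\subset\Omega$, every integer $m\ge 2$ and every $p\ge 3$ there is $C=C(\O_1,m,p,M)$, independent of $j$, with
$$\|u_j\|_{W^{m,p}_{\e_j}(\O_1)}+\|v_j\|_{W^{m,p}_{\e_j}(\O_1)}\le C.$$
Thus both $u_j$ and $v_j$ have uniformly bounded intrinsic Sobolev norms of every order and every exponent.

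The second step is to convert these intrinsic bounds into Euclidean ones. The relevant commutation identities are $\p_1=X_{1,j}-u_j\p_2$, $[\p_2,X_{1,j}]=v_j\p_2$ and $[\p_2,X_{2,j}]=0$. Pushing a factor $\p_2$ to the right through a product $\nabla_{\e_j}^I=X_{i_1,j}\cdots X_{i_k,j}$ therefore writes $\p_2\nabla_{\e_j}^I u_j$ as $\nabla_{\e_j}^I v_j$ plus a polynomial in the quantities $\nabla_{\e_j}^{I'}v_j$ ($|I'|\le k$) and $\nabla_{\e_j}^{I''}u_j$ ($|I''|\le k$), with coefficients that are products of $u_j$ and $v_j$; all factors are bounded in $L^\infty_{\loc}$ by the Lipschitz bound and in every $L^p_{\loc}$ by the first step, so Hölder's inequality yields a uniform $L^p_{\loc}$ bound on $\p_2\nabla_{\e_j}^I u_j$ for every $p$. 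Then $\p_1\nabla_{\e_j}^I u_j=X_{1,j}\nabla_{\e_j}^I u_j-u_j\,\p_2\nabla_{\e_j}^I u_j$, and since $X_{1,j}\nabla_{\e_j}^I u_j$ is just an intrinsic derivative of $u_j$ of order $k+1$, this too is bounded in $L^p_{\loc}$. Hence for every $k$, every $|I|=k$ and every $p>1$, $(\nabla_{\e_j}^I u_j)$ is bounded in $W^{1,p}_{E,\loc}(\Omega)$; the same argument run with $v_j$ in place of $u_j$ shows $(\nabla_{\e_j}^I u_j)$ is in fact bounded in $W^{k,p}_{E,\loc}$ for every $k,p$.

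Finally I would identify the weak limits by induction on $k$, the case $k=1$ being Theorem \ref{CiMo} (cf.\ \eqref{eqde}, \eqref{xj}). Fix $p>1$. By the previous step and Rellich's theorem, along a subsequence $\nabla_{\e_j}^I u_j$ converges weakly in $W^{1,p}_{E,\loc}$ and strongly in $L^p_{\loc}$. For a pure string of $X_{1,j}$'s, write $X_{1,j}^k u_j=\p_1(X_{1,j}^{k-1}u_j)+u_j\,\p_2(X_{1,j}^{k-1}u_j)$; by the inductive hypothesis $X_{1,j}^{k-1}u_j\to X^{k-1}u$ weakly in $W^{1,p}_{E,\loc}$, so $\p_1(X_{1,j}^{k-1}u_j)$ and $\p_2(X_{1,j}^{k-1}u_j)$ converge weakly in $L^p_{\loc}$ to $\p_1(X^{k-1}u)$ and $\p_2(X^{k-1}u)$, while $u_j\to u$ locally uniformly; since the product of a locally uniformly convergent factor with a weakly $L^p_{\loc}$-convergent one converges weakly in $L^p_{\loc}$, we get $X_{1,j}^k u_j\rightharpoonup \p_1(X^{k-1}u)+u\,\p_2(X^{k-1}u)=X^k u$ in $L^p_{\loc}$, and the uniform $W^{1,p}_{E,\loc}$ bound upgrades this to weak convergence in $W^{1,p}_{E,\loc}$; as the limit is subsequence-independent, the whole sequence converges. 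For a string $I$ containing at least one factor $X_{2,j}$, commuting the $X_{2,j}$'s through (which only produces factors $v_j$ and $X_{2,j}$, with bounded coefficients) exhibits $\nabla_{\e_j}^I u_j=\e_j\, w_j$ with $(w_j)$ bounded in every $W^{1,p}_{E,\loc}$, so $\nabla_{\e_j}^I u_j\to 0$ in $W^{1,p}_{E,\loc}$; in particular $X_{2,j}^k u_j\to 0$. Together these give $\nabla_{\e_j}^I u_j\to D_0^I u$.

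I expect the second step to be the main obstacle: the bookkeeping that reduces Euclidean derivatives of arbitrary order to the intrinsic derivatives controlled by Theorem \ref{prop32}, keeping track of which commutator terms carry a harmless factor $\e_j$ and which products must be absorbed using the uniform $L^p_{\loc}$ bounds valid for all $p$. The convergence step is then essentially formal, resting on weak compactness in $W^{1,p}_{E,\loc}$ and on the identification of the limiting horizontal field $X=\p_1+u\,\p_2$.
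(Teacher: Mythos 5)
Your argument is essentially the paper's: uniform $W^{1,p}_{E,\loc}$ bounds on $\nabla_{\e_j}^I u_j$ obtained from Theorem \ref{prop32} (i.e.\ \eqref{star}) together with \eqref{eqde}, by commuting $\p_2$ and $\p_1$ past the intrinsic derivatives, followed by induction on $k$ to identify the limit, with base case \eqref{xj}. The only cosmetic difference is in the inductive step: the paper integrates by parts against a test function, while you expand $X_{1,j}^k u_j=\p_1(X_{1,j}^{k-1}u_j)+u_j\,\p_2(X_{1,j}^{k-1}u_j)$ and combine weak $L^p_{\loc}$ convergence of the Euclidean derivatives with the locally uniform convergence $u_j\to u$; this is the same computation in dual form and is correct.

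One caveat: your side claims that $(\nabla_{\e_j}^I u_j)$ is bounded in $W^{k,p}_{E,\loc}$ for every $k$, and that for a string $I$ containing an index $2$ the function $w_j=\e_j^{-1}\nabla_{\e_j}^I u_j$ is bounded in $W^{1,p}_{E,\loc}$ (hence strong $W^{1,p}_{E,\loc}$ convergence to $0$), are not justified by the available estimates: bounding a second Euclidean $\p_2$-derivative would require uniform intrinsic bounds on derivatives of $\p_2 v_j=\p_2^2 u_j$, which Theorem \ref{prop32} does not give (it controls intrinsic derivatives of $u_j$ and of $v_j=\p_2 u_j$ only, and the bound on $X_{2,j}\nabla^J_{\e_j}v_j=\e_j\,\p_2\nabla^J_{\e_j}v_j$ degenerates as $\e_j\to 0$). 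This does not affect the proposition, which asserts only weak convergence: for the $X_{2,j}$-strings it suffices to combine the uniform $W^{1,p}_{E,\loc}$ bound on $\nabla_{\e_j}^I u_j$ itself with the strong $L^p_{\loc}$ convergence $\e_j w_j\to 0$ (since $w_j$ is bounded in $L^p_{\loc}$), which you have already established.
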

\begin{proof}
Arguing as in the previous result and using \eqref{star}, and (\ref{eqde})  we deduce that for
every ball $B(R)\subset\subset \O$ there exist $C_1, C_2,\tilde C$ independent of $j$, such
that $$ ||\p_2 D^I_{\e_j}u_j||_{L^p(B(R))} \leq
C_1\sum_{|J|\le|I|} ||D^J_{\e_j}v_j||_{L^p(B(R))} + C_2\le  \tilde C,$$ $$||
\p_xD^I_{\e_j}u_j||_{L^p(B(R))} \leq ||
X_{1,j} D^I_{\e_j}u_j||_{L^p(B(R))} + || u_j
\p_2D^I_{\e_j}u_j||_{L^p(B(R))}\leq \tilde C,$$
so
 that $(\nabla_{\e_j}^I
u_j)$ is bounded in $W^{1,p}_{E,\loc}(\Omega)$ for every $p>1$, and
every multi-index $I$. Let us prove that the weak limit is $D_0^I
u$. If $|I|=1$ the assertion is true by (\ref{xj}). If $I$ is a
multi-index such that $|I|=k$, we can assume by simplicity that $I
= (1, I')$, where $|I'|=k-1$. We can also assume by inductive
hypothesis that $$\p_2 u_j \rightarrow \p_2 u \;\; as \;\; j
\rightarrow \infty \;\; weakly \;\; in \;\; L^p(\O)$$ $$u_j
\rightarrow u \;\; as \;\; j \rightarrow \infty \;\; in \;\;
L^p_{loc}(\O)$$ $$\nabla_{\e_j}^{I'}u_j \rightarrow D_0^{I'} u \;\; as
\;\; j \rightarrow \infty \;\;in \;\; L^p_{loc}(\O).$$ Then
integrating by parts $$\lim_{j\rightarrow \infty}\int
\nabla_{\e_j}^{I}u_j \phi = - \lim_{j\rightarrow \infty} \int
\nabla_{\e_j}^{I'}u_j X_j\phi - \int \p_2 u_j \nabla_{\e_j}^{I'}u_j \phi = $$
$$= - \int D_0^{I'}u X\phi - \int \p_2 u D_0^{I'}u \phi,$$ and this
ensures the weak convergence of $(D^I_{\e_j}u_j)$ to $D^I_{0}u.$
\end{proof}

\begin{remark} In view of the Ascoli-Arzel\'a  theorem and the Morrey-Sobolev embedding one has convergence
$X_{1,j}^k u_j\to X^k u$ in
the $C^{\alpha}$ norm on compact subsets of $\O$ for all
$\alpha\in (0,1)$.
\end{remark}

We can now prove the main regularity properties of the limit
function $u$:

\begin{proposition}\label{prop43}
For every $k$, and  for every $p>1$  the function  $z= X^k u $ belongs to $
 W^{1,p}_{E,\loc}(\O) $ and it is an a.e. solution
of
\begin{equation} \label{eqz}X^2z  = 0 \quad in \;\;\O.\end{equation} In
particular
\begin{equation}\label{hold}
 X^k u \in C^\alpha_{loc}(\O)
 \end{equation}
for every $0<\alpha<1$.
\end{proposition}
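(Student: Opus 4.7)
The plan is to pass to the limit in the non-divergence form of the approximating PDE, exploiting the $\e$-uniform intrinsic Sobolev bounds of Theorem~\ref{prop32}. The integrability $z = X^k u \in W^{1,p}_{E,\loc}(\Om)$ for every $p>1$ is already provided by Proposition~\ref{prop41}, and \eqref{hold} then follows from the classical Morrey--Sobolev embedding $W^{1,p}_E(K) \hookrightarrow C^{1-2/p}(K)$ in $\R^2$, valid for $p > 2$, by letting $p$ be arbitrarily large.

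To establish $X^2 z = 0$, I first rewrite $L_{\e_j} u_j = 0$ in non-divergence form,
$$a_{11}(\nabla_{\e_j} u_j)\, X_{1,j}^2 u_j + 2 a_{12}(\nabla_{\e_j} u_j)\, X_{1,j} X_{2,j} u_j + a_{22}(\nabla_{\e_j} u_j)\, X_{2,j}^2 u_j = 0.$$
Since $a_{11}(p) = (1 + p_2^2)/(1 + |p|^2) \ge 1/(1+M^2)$ uniformly in $j$, this relation can be solved for $X_{1,j}^2 u_j$. Because $X_{2,j} = \e_j \p_2$ and $[X_{1,j}, X_{2,j}] = -(\p_2 u_j)\, X_{2,j}$, any monomial of the form $X_{1,j}^{a_1} X_{2,j}^{b_1} \cdots X_{1,j}^{a_m} X_{2,j}^{b_m} u_j$ can be rearranged using the commutator so that all $X_{2,j}$'s stand to the right, producing $\e_j^{\,b_1+\cdots+b_m}$ times a polynomial in $X_{1,j}$ and $\p_2$ derivatives of $u_j$ and of $\p_2 u_j$, all of which are uniformly bounded in every $L^p_{\loc}$ by Theorem~\ref{prop32}.

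Applying $X_{1,j}^k$ to the solved-for equation and expanding via the Leibniz rule, every resulting summand is a product of (a) derivatives of the smooth functions $a_{ij}/a_{11}$, uniformly bounded in every $L^q_{\loc}$ by the chain rule and Theorem~\ref{prop32}, with (b) a factor $X_{1,j}^{\ell_1} X_{2,j}^{\ell_2} u_j$ containing at least one $X_{2,j}$, and hence of order $O(\e_j)$ in $L^p_{\loc}$ by the previous observation. Consequently $X_{1,j}^{k+2} u_j \to 0$ strongly in $L^p_{\loc}(\Om)$ as $j \to \infty$. On the other hand, Proposition~\ref{prop41} yields $X_{1,j}^{k+2} u_j \to X^{k+2} u$ weakly in $L^p_{\loc}$; uniqueness of limits gives $X^{k+2} u = 0$ a.e. Since $z \in W^{1,p}_{E,\loc}$, the identities $Xz = \p_1 z + u\,\p_2 z = X^{k+1} u$ and $X^2 z = X(Xz) = X^{k+2} u$ hold in the Sobolev sense, so $X^2 z = 0$ a.e.\ in $\Om$.

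The main technical obstacle is the bookkeeping certifying that every term produced by the Leibniz expansion retains an explicit factor of $\e_j$: this hinges on the observation that using $[X_{1,j}, X_{2,j}] = -(\p_2 u_j)\, X_{2,j}$ to push $X_{2,j}$ factors past $X_{1,j}$ factors never decreases the number of $X_{2,j}$'s (indeed it can only produce new ones), so the total power of $\e_j$ in any such monomial is at least the original count of $X_{2,j}$'s, which in the present expansion is always $\ge 1$.
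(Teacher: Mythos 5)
Your proposal is correct, but it establishes the key identity $X^2z=0$ by a genuinely different route than the paper. The paper's proof is a two-line reduction: Theorem \ref{CiMo} has already shown that $X^2u=0$ holds in the weak Sobolev sense, hence a.e.\ in $\O$, and Proposition \ref{prop41} guarantees that each $X^ku$ lies in $W^{1,p}_{E,\loc}(\O)$; since the weak derivative of a function vanishing a.e.\ is zero, one simply differentiates the identity $X^2u=0$ along $X$ iteratively to get $X^{k+2}u=X^k(X^2u)=0$ a.e. You instead return to the approximating equations, solve the non-divergence form for $X_{1,j}^2u_j$ using the uniform ellipticity of $a_{11}$, and show via a Leibniz/commutator bookkeeping that $X_{1,j}^{k+2}u_j=O(\e_j)$ strongly in $L^p_{\loc}$, which combined with the weak convergence of Proposition \ref{prop41} gives $X^{k+2}u=0$. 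This works and has the virtue of making the vanishing mechanism (every surviving term carries a factor $\e_j$) explicit at each order; the paper's iteration is shorter and sidesteps the bookkeeping entirely. One small imprecision in your write-up: pure $\p_2$-derivatives of $v_j=\p_2u_j$ of order $\ge 1$ are \emph{not} uniformly bounded by Theorem \ref{prop32}, which only controls $\nabla_{\e_j}$-derivatives; e.g.\ $\p_2 v_j=\e_j^{-1}X_{2,j}v_j$ may blow up. This does not break your argument, because in each monomial $X_{1,j}^aX_{2,j}^bu_j$ with $b\ge1$ you should factor out exactly one $\e_j$ and keep the remaining $X_{2,j}$'s attached to $v_j$, writing it as $\e_j\,X_{1,j}^aX_{2,j}^{b-1}v_j$, which is $\e_j\cdot O(1)$ in $L^p_{\loc}$ by Theorem \ref{prop32}; the claim that one gains $\e_j^{b}$ with a uniformly bounded remainder is too strong, but one power of $\e_j$ is all that is needed. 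The treatment of the integrability statement and of \eqref{hold} via Morrey--Sobolev coincides with the paper's.
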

\begin{proof}Since $u$ is a vanishing viscosity  solution of $X^2 u =0$ in $\O$,  then Proposition \ref{prop41} implies $X^2 u \in W^{1,p}_{E,\loc}(\O)$ for all $p\ge 1$. As $X^2u=0$ a.e. in $\O$, then a simple iteration
shows that
all the derivatives $X^2 X^k u$ vanish a.e.
 in $\O$.  The H\"older regularity  \eqref{hold} follows
 from  the classical Morrey-Sobolev embedding
theorem.
\end{proof}

\medskip

We can now give a new pointwise definition of derivative in the
direction of vector fields $X_1$  and $X_2$.

\begin{definition}\label{deriv}
Let $X$ be a Lipschitz vector field on $\Om$ and let  $\xi_0\in \Omega$ and $\gamma$ be a solution to
problem $\gamma' = X(\gamma)$, $\gamma(0)=\xi_0$.

We say that a function $f\in C^{\alpha}_{loc}(\O),$ with $\alpha
\in ]0,1[,$ has Lie-derivative in the direction of the vector
field $X$ in $\xi_0$ if there exists $$ \frac{d}{dh}(f\circ\gamma)_{|h=0},$$ and we will denote its value by
$Xf(\xi_0).$
\end{definition}

\bigskip

If the weak derivative of a function $f$ is sufficiently regular,
then the two notions of derivatives coincide. For the proof of the following
result see \cite[Remark 5.6]{CLM}.

\begin{proposition} \label{lieweak}
If $f\in C_{loc}^{\alpha}(\Omega)$ for some $\alpha\in ]0,1[$ and
its weak derivatives $Xf\in C_{loc}^{\alpha}(\Omega),
\partial_2f\in L^p_{loc}(\Omega)$ with $p>1/\alpha,$ then for all
$\xi \in \Omega$ the Lie-derivatives $Xf(\xi)$ exist and coincide
with the weak ones.
\end{proposition}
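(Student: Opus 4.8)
The plan is to straighten the flow of $X$ and then to upgrade an a.e.-valid integral identity to the single prescribed integral curve, using only the continuity of $f$ and of the weak derivative $Xf$. Work with $X=X_{1}=\p_{1}+u\,\p_{2}$, $u$ Lipschitz (the case $X=\e\,\p_{2}$ reduces at once to the classical statement, since then $\p_{2}f=\e^{-1}Xf\in C^{\alpha}_{\loc}$ and the integral curves are vertical segments). Since $\p_{1}f=Xf-u\,\p_{2}f$, with $Xf\in C^{\alpha}_{\loc}\subset L^{p}_{\loc}$ and $\p_{2}f\in L^{p}_{\loc}$, we have $f\in W^{1,p}_{\loc}(\O)$ with $p>1/\alpha>1$; this is the only role played by the integrability hypothesis on $\p_{2}f$.

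Fix $\xi_{0}\in\O$. Because the $\p_{1}$-component of $X$ equals $1$, the affine line $\{x_{1}=(\xi_{0})_{1}\}$ is transversal to $X$ at $\xi_{0}$, and the flow map $\Psi(t,y):=\gamma_{y}(t)$, where $\gamma_{y}$ solves $\gamma_{y}'=X(\gamma_{y})$, $\gamma_{y}(0)=((\xi_{0})_{1},y)$, is a bi-Lipschitz homeomorphism of a box $(-\delta,\delta)\times((\xi_{0})_{2}-\delta,(\xi_{0})_{2}+\delta)$ onto a neighbourhood of $\xi_{0}$ (Gr\"onwall gives Lipschitz dependence on the datum both forwards and backwards, so the Jacobian of $\Psi$ is bounded above and below). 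Approximating $f$ by smooth functions in $W^{1,p}_{\loc}$ and changing variables, one gets $\tilde f:=f\circ\Psi\in W^{1,p}_{\loc}$ with the chain rule holding a.e.; since $\p_{t}\Psi=X\circ\Psi$ this reads
\[
\p_{t}\tilde f=\big((\p_{1}f+u\,\p_{2}f)\circ\Psi\big)=(Xf)\circ\Psi ,
\]
which is a \emph{continuous} function on the box; $\tilde f$ is continuous as well.

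By the absolute-continuity-on-lines characterization of Sobolev functions, for a.e. $y$ the slice $t\mapsto\tilde f(t,y)$ is absolutely continuous, so
\[
\tilde f(t,y)-\tilde f(0,y)=\int_{0}^{t}(Xf)(\gamma_{y}(s))\,ds\qquad\text{for every }t\in(-\delta,\delta).
\]
Letting $y\to(\xi_{0})_{2}$ through such admissible values, the left side converges to $\tilde f(t,(\xi_{0})_{2})-\tilde f(0,(\xi_{0})_{2})$ by continuity of $\tilde f$, and the right side converges to $\int_{0}^{t}(Xf)(\gamma_{(\xi_{0})_{2}}(s))\,ds$ by the local uniform continuity of $(Xf)\circ\Psi$. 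Hence the identity persists for $y=(\xi_{0})_{2}$, i.e. for the integral curve $\gamma=\gamma_{(\xi_{0})_{2}}$ through $\xi_{0}$; since its integrand is continuous, $t\mapsto f(\gamma(t))$ is $C^{1}$ and $\frac{d}{dh}(f\circ\gamma)\big|_{h=0}=(Xf)(\xi_{0})$, which is exactly the claim. (Equivalently, one may mollify $f$: $f_{\e}\circ\gamma$ is $C^{1}$ with derivative $\big((Xf)_{\e}+R_{\e}\big)\circ\gamma$, where $R_{\e}=u(\p_{2}f)_{\e}-(u\p_{2}f)_{\e}$ is a Friedrichs commutator with $|R_{\e}|\le C\e\,(|\p_{2}f|*\rho_{\e})$ and $(Xf)_{\e}\to Xf$ locally uniformly; but passing to the limit along $\gamma$ then needs the same a.e.-to-everywhere device.)

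The delicate step is exactly this passage from the a.e.-valid identity to the prescribed curve through $\xi_{0}$: the Sobolev information about $f$ lives off a null set and $\p_{2}f\in L^{p}$ yields no pointwise control of $\p_{2}f$ along an individual curve. What legitimizes the passage is that the quantity surviving along the flow, $(Xf)\circ\Psi$, is continuous — here the hypothesis $Xf\in C^{\alpha}_{\loc}$ is essential — together with the continuity of $f$. A secondary technical point is the chain rule for the merely bi-Lipschitz change of variables $\Psi$, which is handled by density of smooth functions in $W^{1,p}_{\loc}$ and the two-sided bound on the Jacobian of $\Psi$.
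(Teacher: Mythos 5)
Your argument is correct, but it is not the route the paper takes: the paper gives no proof at all and defers to \cite[Remark 5.6]{CLM}, where the statement is proved by a mollification (Friedrichs commutator) argument of the kind you sketch in your parenthetical remark --- there one estimates the commutator $u(\p_2 f)_\sigma-(u\p_2 f)_\sigma$ along the curve by $\sigma^{\alpha}\sigma^{-1/p}\|\p_2 f\|_{L^p}$, and it is exactly there that the hypothesis $p>1/\alpha$ is used, since in that setting the coefficient of the vector field is only H\"older continuous, integral curves need not be unique, and no flow box is available. Your proof instead exploits the fact that in the present paper $X=\p_1+u\,\p_2$ has a \emph{Lipschitz} coefficient (this is what Definition \ref{deriv} assumes and what holds in the application, $u\in Lip(\O)$): the flow map $\Psi$ is then a locally bi-Lipschitz homeomorphism, $f\in W^{1,p}_{loc}$ because $\p_1 f=Xf-u\,\p_2 f\in L^p_{loc}$, the Sobolev chain rule under a bi-Lipschitz change of variables gives $\p_t(f\circ\Psi)=(Xf)\circ\Psi$ a.e., the ACL property yields the fundamental-theorem identity along almost every leaf, and the continuity of $f$ and of $Xf$ (only continuity, not H\"older) upgrades it to the particular leaf through $\xi_0$. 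This is a clean and genuinely more elementary argument; it also shows that under the Lipschitz hypothesis on $X$ the condition $p>1/\alpha$ can be relaxed to mere local $p$-integrability of $\p_2 f$, whereas the cited commutator proof is the one that survives when the coefficient is only $C^{\alpha}$ (the situation for which the exponent restriction was designed). The two technical points you should keep explicit if this were written out in full are the ones you already flag: the a.e.\ chain rule for $W^{1,p}$ composed with a bi-Lipschitz map (justified by smooth approximation together with the two-sided Jacobian bound), and the fact that the continuous representative of $f\circ\Psi$ is the one enjoying the ACL property, so that letting $y\to(\xi_0)_2$ through admissible slices is legitimate.
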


We are now ready to prove the result concerning the foliation

\noindent {\bf Proof of Corollary \ref{teo01}} The equation $\gamma'=
X(\gamma) $ has an unique solution, of the form
$$\gamma(x) = (x, y(x)),$$
where $y'(x) = u(x, y(x)).$ In view of the regularity of $u$ and
of the previous proposition then
 $y''(x) = Xu(x, y(x)),$ and
$y'''(x) =X^2 u (x,y(x))=0.$
This shows that  $\gamma$ is a polynomial of order 2 and concludes
the proof. \qed
\bibliographystyle{acm}
\bibliography{GiLu}
\end{document}